\documentclass[a4paper, 11pt]{article}
\usepackage[utf8]{inputenc}	
\usepackage[english]{babel} 

\usepackage{mathtools}
\usepackage{amsmath}
\usepackage{dsfont}
\usepackage{amsthm}
\usepackage{amssymb}
\usepackage{xcolor}
\usepackage{multirow}
\usepackage{bm}
\usepackage{lineno}
\usepackage[pdftex,pdfborder={0 0 0}, 
colorlinks=true, 
linkcolor=blue, 
citecolor=red, 
pagebackref=false, 
]{hyperref}

\usepackage{graphicx}
\usepackage{subcaption}

\usepackage{algorithm,algorithmic}

\theoremstyle{plain}
\newtheorem{theorem}{Theorem}
\newtheorem{proposition}[theorem]{Proposition}
\newtheorem{lemma}[theorem]{Lemma}
\newtheorem{corollary}[theorem]{Corollary}
\newtheorem{remark}[theorem]{Remark}

\newcommand{\R}{\mathbb{R}}

\newcommand{\gammamap}{\boldsymbol{\gamma}}
\newcommand{\Pmap}{\boldsymbol{P}}
\newcommand{\umap}{\boldsymbol{u}}
\newcommand{\vmap}{\boldsymbol{v}}
\newcommand{\mmap}{\boldsymbol{m}}
\newcommand{\wmap}{\boldsymbol{w}}
\newcommand{\fpmap}{\boldsymbol{M}}
\newcommand{\ballR}{\Xi_R}
\newcommand{\coupling}{\Xi}
\newcommand{\feedback}{\Theta}

\newcommand{\hypA}{(H1)}
\newcommand{\hypB}{(H2)}
\newcommand{\hypC}{(H3)}
\newcommand{\hypD}{(H4)}
\newcommand{\hypE}{(H5)}
\newcommand{\hypF}{(H6)}

\def\<{{\langle}}
\def\>{{\rangle}}
\def\dd{{\rm d}}

\def\1Lip{1\text{-Lip}}
\def\l({\left(}
\def\r){\right)}

\def\dd{\mathrm{d}}

\DeclareMathOperator*{\argmin}{arg\,min}

\usepackage{hyperref}
\usepackage{vmargin}
\setmarginsrb{3.5cm}{3cm}{3.5cm}{2cm}{0cm}{0cm}{0cm}{1.5cm}
\allowdisplaybreaks[3]

\begin{document}

\title{Generalized conditional gradient and learning in potential mean field games\footnote{This work was supported by a public grant as part of the
Investissement d'avenir project, reference ANR-11-LABX-0056-LMH,
LabEx LMH.} }
\author{
Pierre Lavigne\footnote{Institut Louis Bachelier. E-mail: \href{mailto:pierre.lavigne@institutlouisbachelier.org}{pierre.lavigne@institutlouisbachelier.org}.}
\and
Laurent Pfeiffer\footnote{Inria and Laboratoire des Signaux et Systèmes, CNRS (UMR 8506), CentraleSupélec, Université Paris-Saclay, 91190 Gif-sur-Yvette, France. E-mail: \href{mailto:laurent.pfeiffer@inria.fr}{laurent.pfeiffer@inria.fr}.}
}

\date{\today}

\maketitle

\begin{abstract}
We investigate the resolution of second-order, potential, and monotone mean field games with the generalized conditional gradient algorithm, an extension of the Frank-Wolfe algorithm. We show that the method is equivalent to the fictitious play method. We establish rates of convergence for the optimality gap, the exploitability, and the distances of the variables to the unique solution of the mean field game, for various choices of stepsizes. In particular, we show that linear convergence can be achieved when the stepsizes are computed by linesearch.
\end{abstract}

\paragraph{Key-words:} mean field games, generalized conditional gradient, fictitious play, mean field optimal control, learning, exploitability.

\paragraph{AMS classification:} 90C52, 91A16, 91A26, 91B06, 49K20, 35F21, 35Q91.

\section{Introduction}

\paragraph{Framework}
Mean field games (MFG), introduced by J.-M.~Lasry and P.-L.~Lions in \cite{LL07mf} and M.~Huang, R.~Malham\'e, and P.~Caines in \cite{HCMieeeAC06}, are a class of mathematical problems which allow to approximate differential games involving a very large number of agents. The general situation of interest is as follows: each agent aims at minimizing some cost function, depending on his own decision variables and some coupling terms, common to all agents. There are two fundamental assumptions in MFG theory: the coupling terms depend on the distribution of the agents and each agent has a negligible contribution to the coupling terms. Mean field games can typically be formulated as a coupled system of two equations, characterizing the decisions of the agents as functions of the coupling terms and vice versa.

In this article we consider a mean field game system of the following form 
\begin{equation} \label{eq:MFG-gcg} \tag{MFG}
	\begin{cases}
	\begin{array}{clc}
	\text{(i)} &
	\begin{cases}
	\, - \partial_t u - \Delta u + \bm{H}[  \nabla u + A^\star P] = \gamma, \\
	\,  u(x,T) =  g(x),
	\end{cases} & \begin{array}{r}
	(x,t) \in Q, \\
	x\in \mathbb{T}^d,
	\end{array}
	\\[1.5em]
	\text{(ii)} & v  = -  \bm{H}_p[  \nabla u +  A^\star P], & (x,t) \in Q,
	\\[1em]
	\text{(iii)} & \begin{cases}
	\, \partial_t m - \Delta m + \nabla \cdot (v m) = 0,  \\
	\, m(0,x) = m_0(x), 
	\end{cases} &  \begin{array}{r}
	(x,t) \in Q, \\
	x\in \mathbb{T}^d,
	\end{array} \\[1.5em]
	\text{(iv)} & \gamma(x,t) = f(x,t,m(t)), & (x,t) \in Q, \\[1em]
	\text{(v)} & P(t) = \bm{\phi}[A[v m]](t), & t \in [0,T].
	\end{array}
	\end{cases}
	\end{equation}
The unknown of the system is $(m,v,u,\gamma,P)$, with $m \colon Q \rightarrow \R$, $v \colon Q \rightarrow \R^d$, $u \colon Q \rightarrow \R$, $\gamma \colon Q \rightarrow \R$, and $P \colon [0,T] \rightarrow \R^k$. The data of the system (which will be clearly defined latter in Section \ref{sec:formulation}) are the initial distribution $m_0$ and the terminal cost $g$. The mapping $H$ denotes the Hamiltonian of the system,  $f$ and $\phi$ are a congestion cost and a price function.

In this model, the coupling terms are the variables $\gamma$ and $P$. Given $\gamma$ and $P$, the optimal control problem solved by a representative agent is described by \eqref{eq:cost_oc} and \eqref{mapping:u}; the optimal feedback $v$ is obtained by computing the corresponding value function $u$, solution the Hamilton-Jacobi-Bellman equation (\ref{eq:MFG-gcg},i) and then $v$ is obtained with using Equation (\ref{eq:MFG-gcg},ii).
Conversely, the coupling terms $\gamma$ and $P$ are deduced from $v$ by computing the distribution $m$ of the agents, solution to the Fokker-Planck equation (\ref{eq:MFG-gcg},iii). The first coupling term $\gamma$ is deduced from $m$ through (\ref{eq:MFG-gcg},iv) and the second coupling term $P$ is deduced from $m$ and $v$ through equation (\ref{eq:MFG-gcg},v).
Interactions through the density of players typically appear in epidemic or crowd motion models, while interactions through the controls $v$ typically appear in economics, finance or energy management models.

We assume in this work that the interaction cost and the price function derive from convex potentials. Our MFG system has then a potential structure, that is, it can be interpreted as the first-order necessary and sufficient optimality conditions for a convex mean field control problem, 
\begin{equation} \label{pb:control-w-FP} \tag{P}
\inf_{(m,w) \in \mathcal{R}} \, \mathcal{J}(m,w) \coloneqq \mathcal{J}_1(m,w) + \mathcal{J}_2(m,w),
\end{equation}
obtained using the classical Benamou-Brenier transformation.
Problem \eqref{pb:control-w-FP} is referred to as the potential problem. Roughly speaking (the precise definition is given in \eqref{label:defR}), the feasible set $\mathcal{R}$ is the set of pairs $(m,w)$ satisfying the Fokker-Planck equation $\partial_t m - \Delta m + \nabla \cdot w = 0$ and $m(0) = m_0$ and
\begin{equation} \label{eq:cost_decomposition}
\begin{array}{rl}
\mathcal{J}_1(m,w) = {} & {\displaystyle \int_{Q} \tilde{\bm{L}}[m,w](x,t) \, \dd x \, \dd t 
+ \int_{\mathbb{T}^d} g(x) m(x,T) \, \dd x, } \\[1.2em]
\mathcal{J}_2(m,w)= {} & {\displaystyle \int_0^T } \big( \bm{F}[m](t) + \bm{\Phi}[Aw](t) \big) \dd t.
\end{array}
\end{equation}

Potential MFGs have been widely investigated, we refer the reader to \cite{benamou2017variational,cardaliaguet2015second} 
for interactions through the density $m$ and to \cite{BHP-schauder,graber2018variational,graber2020weak} for price interactions.

Various methods from convex optimization have been been employed to solve  potential problems, see \cite{achdou2020mean} for a survey. A first approach consists in formulating the potential problem as a saddle-point problem and to solve it with primal-dual algorithms, see \cite{bonnans2021discrete,briceno2019implementation,briceno2018proximal}.
Another approach consists in applying the augmented Lagrangian algorithm to the dual problem of the potential problem, see \cite{Benamou2015,benamou2017variational,bonnans2021discrete}.
Other methods have been investigated such as the Sinkhorn algorithm \cite{benamou2019entropy}.

\paragraph{Generalized conditional gradient algorithm}
The aim of the paper is to show that the potential mean field game system \eqref{eq:MFG-gcg} can be efficiently solved with the \emph{generalized conditional gradient} method (GCG). This method is iterative: at iterate $k$, given a candidate $(\bar{m}_k,\bar{w}_k)$, one first solves the following partially linearized problem:
\begin{equation} \label{eq:the_first_lin_pb}
\inf_{(m,w) \in \mathcal{R}} \
\mathcal{J}_1(m,w)
+ D\mathcal{J}_2[\bar{m}_k,\bar{w}_k](m,w).
\end{equation}
The derivative $D\mathcal{J}_2[\bar{m}_k,\bar{w}_k]$ will be explicitly defined in the analysis.
Denoting by $(m_k,w_k)$ a solution to this problem, the next iterate is defined as
$(\bar{m}_{k+1},\bar{w}_{k+1}) = (1- \delta_k) (\bar{m}_{k},\bar{w}_{k}) + \delta_k (m_{k},w_{k})$ for some stepsize $\delta_k \in [0,1]$.

The GCG algorithm, first introduced in \cite{bredies2009generalized}, is an extension of the conditional gradient algorithm, also called Frank-Wolfe algorithm. The conditional gradient method allows to minimize a convex objective function on a convex and compact set. A classical choice of step size is given by $\delta_k = 2/(k+2)$ (see \cite{jaggi2013revisiting}) which yields the convergence of the objective function in $O(1/k)$. 

Most of the basic existing convergence results for the conditional gradient remain true for the GCG method, which also exhibits faster convergence rates in some cases, typically when the partially linearized cost function enjoys some coercivity property and when $\delta_k$ is obtained with a linesearch procedure. Improved rates of convergence have been recently obtained in \cite{kunisch2022fast}, in an infinite-dimensional setting.
The main interest of the generalized conditional gradient compared to the Frank-Wolfe algorithm is to use the regularity of the cost function to reach higher convergence rates. This algorithm is thus very suitable for potential mean field games since it is inherently assumed that the potential coupling terms are differentiable.

\paragraph{Learning in mean field games}

A fundamental issue in game theory is the formation of an equilibrium. It is often unrealistic to consider that the agents can perfectly anticipate the behavior of the others, in particular in the sophisticated situation underlying an MFG model. On contrast, it is more realistic to assume that the game is repeated many times and that the agents update their decisions according to a more or less complex procedure called \emph{learning} procedure. We consider in this article the \emph{fictitious play}, a learning procedure in which the agents play at each iteration of the game an optimal decision (also called \emph{best-responses}), corresponding to a predicted value of the coupling term (also called \emph{belief}), which is then updated.
In the context of MFGs, the fictitious play has been investigated in  \cite{cardaliaguet2017learning,elie2019approximate,HADIKHANLOO2019369,perrin2020fictitious}.
The convergence results for learning methods can be of various forms. In potential games, one can study the convergence of the potential cost along a sequence generated by the fictitious play algorithm.
In general, one can consider the \emph{exploitability} of the game at each iteration and try to show its convergence to zero.
For a given value of the coupling terms, the exploitability is the highest reduction of cost that a representative agent can achieve by changing his current decision to the best-response, assuming that the coupling terms remain the same. The convergence of the exploitability has been addressed in \cite{perrin2020fictitious} in the context of continuous-time learning and discrete mean field games, and a convergence rate is provided.

A key message of this article is that, in the context of second-order potential mean field games,  the fictitious play method can be interpreted as a GCG algorithm.
This is a consequence of the fact that the partially linearized problem \eqref{eq:the_first_lin_pb} is equivalent to a standard stochastic optimal control. As will be justified, the unique solution $(m_k,w_k)$ to \eqref{eq:the_first_lin_pb} is obtained by first computing the belief $(\gamma_k,P_k)$
\begin{equation*}
	\gamma_k(x,t) = f(x,t,\bar{m}_k(t)), \quad P_k(t) = \phi (t, A \bar{w}_k (t)),
\end{equation*}
next by solving the PDEs
\begin{equation*}
	\begin{cases}
		\begin{array}{lr}
			- \partial_t u_k - \Delta u_k + \bm{H}[  \nabla u_k + A^\star P_k] = \gamma_k,
	 	& \begin{array}{r} (x,t) \in Q, \end{array} \\
		 \partial_t m_k - \Delta m_k + \nabla \cdot w_k = 0, & \begin{array}{r} (x,t) \in Q, \end{array}
		\end{array}
	\end{cases}
\end{equation*}
using the same boundary conditions as in \eqref{eq:MFG-gcg}, and finally by computing the best-response $w_k  = -  \bm{H}_p[  \nabla u_k +  A^\star P_k] m_k$. If we further choose $\delta_k = 1/(k+1)$, we recover the fictitious play algorithm introduced in \cite{hadikhanloo2017learning}.

\paragraph{Exploitability and primal-dual gap}

The connection between the GCG algorithm and the ficitous play allows us to show that the notion of primal-dual gap \emph{primal-dual gap} and the notion of exploitability of the game are equivalent. 
This interpretation has already been highlighted in a very recent work \cite{geist2021concave}, for a class of potential mean field games with some discrete structure.
The connection between the Frank-Wolfe algorithm and fictitious play has also been been investigated in \cite{sorin2022continuous} for a general class of potential games.

\paragraph{Contributions}

The article \cite{cardaliaguet2017learning} is the most related to ours. It considers a second-order potential MFG, similar to our model but without price interaction.
It is proved that any cluster point (there exists at least one) of the sequences of value functions and probability distributions generated by the fictitious play is a solution to the MFG. In the case of a convex potential, the entire sequence converge.

The connection between the GCG algorithm and fictitious play, in the context of second-order MFGs, is the first contribution of our work. As we already mentioned, this connection was already established in \cite{geist2021concave}, in a different MFG setting. Taking advantage of this connection, we prove a general convergence result for the optimality gap (associated with the potential cost), when the stepsizes are predefined. These results easily lead to convergence rates, in particular, for $\delta_k= p/(k+p)$ (with $p> 0$), we prove a convergence rate of order $\mathcal{O}(k^{-p})$. This covers the case of the fictitious play (with $p=1$) and thus improves the convergence result of \cite{cardaliaguet2017learning} in the convex potential case.
Our most important contribution is the proof of the linear convergence of the optimality gap when the stepsizes are determined with classical linesearch rules, which is of major interest from a numerical perspective.
Let us note that our analysis is restricted to the case of non-degenerate second-order MFGs.
At a technical level, the proof of convergence utilizes in a crucial manner techniques from \cite{kunisch2022fast}.
Let us emphasize that for the methods from convex analysis which we have cited above (Chambolle-Pock, augmented Lagrangian, Sinkhorn), no rate of convergence has been established. To the best of our knowledge, only two references provide a linear rate of converge for MFGs: 
\cite{wang2021global}, which is restricted to linear-quadratic MFGs, and \cite{camilli2022rates}, under a restrictive smallness assumption for the coupling function.

\paragraph{Plan of the paper}

The paper is organized in two main parts. The first part, consisting of Sections \ref{sec:formulation} to \ref{sec:numerics}, present the GCG algorithm for MFGs. 
Section \ref{sec:formulation} is dedicated to the notations, the assumptions and the introduction of the main mappings used all along the article. We then describe the GCG algorithm in Section \ref{sec:gcg} and we explain its connection with fictitious play. We also state our main convergence results: Theorems \ref{theo:variables} and \ref{theo:main}. Section \ref{sec:numerics} present numerical illustrations of the convergence results for an academical problem.

The second part, consisting of Section \ref{sec:analysis}, deals with the analysis of the GCG algorithm. We establish the well-posedness of Algorithm \ref{algo:gcg} and prove Theorems \ref{theo:variables} and \ref{theo:main}.
Related technical details concerning the Fokker-Planck and the HJB equations are available in Appendix \ref{sec:mappings}.

\section{Notations, assumptions and mappings}

\label{sec:formulation}

\subsection{Notation}

Let $T>0$ denote the horizon of the game. We fix $d$ and $k$ in $\mathbb{N}^*$. We denote by $\mathbb{T}^d$ the $d$-dimensional torus and we set $Q =  \mathbb{T}^d \times  [0,T]$. For any subsets $O$ and $K$ of $\mathbb{R}^d$, we denote $\mathcal{C}(O;K)$ the set of continuous mappings on $O$ valued in $K$. In the article, when $K = \mathbb{R}$, we simply denote $\mathcal{C}(O)$ and we make use of this convention for any other functional spaces. 

\paragraph{H{\"o}lder spaces}

For any $\alpha \in (0,1)$, we denote by $\mathcal{C}^\alpha(Q)$ the set of H{\"o}lder continuous mappings on $Q$ of exponent $\alpha$. 
We denote by $\mathcal{C}^{\alpha,\alpha/2}(Q)$ the set of functions on $Q$ which are H{\"o}lder continuous of exponent $\alpha$ with respect to space and H{\"o}lder continuous of exponent $\alpha/2$ with respect to time. 
We denote by $\mathcal{C}^{2+\alpha,1+\alpha/2}(Q)$ the set of functions $u \in \mathcal{C}^\alpha(Q)$ with partial derivatives $\partial_t u$, $\partial_{x_i} u$, and $\partial_{x_i,x_j}^2 u$ in $\mathcal{C}^{\alpha,\alpha/2}(Q)$.
Finally, we denote by $\mathcal{C}^{2+\alpha}(\mathbb{T}^d)$ the set of $\alpha$-H\"older continuous functions, such that all partial derivatives up to the order two are $\alpha$-H\"older continuous.

\paragraph{Sobolev spaces and density space}
We denote by $W^{n,q}(\mathbb{T}^d)$ the Sobolev space of functions with weak partial derivatives in $L^q(\mathbb{T}^d)$, up to the order $n$. We set
\begin{align*} \nonumber
W^{2,1,q}(Q) = {} & W^{1,q}(Q) \cap L^q(0,T;W^{2,q}(\mathbb{T}^d)) \\ 
W^{1,0,\infty}(Q) = {} & L^\infty(0,T;W^{1,\infty}(\mathbb{T}^d)).
\end{align*}
From now on, we fix a real number $q$ such that $q > d + 2$.

\begin{lemma}
\label{lemma:max_reg_embedding}
There exists $\delta \in (0,1)$ and $C>0$ such that for all $u \in
W^{2,1,q}(Q)$,
\begin{equation*}
\| u \|_{\mathcal{C}^\delta(Q)} + \| \nabla u \|_{\mathcal{C}^\delta(Q;\R^d)}
\leq C \| u \|_{W^{2,1,q}(Q)}.
\end{equation*}
\end{lemma}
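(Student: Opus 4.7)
The plan is to invoke the classical parabolic Sobolev embedding theorem due to Ladyzhenskaya, Solonnikov, and Uraltseva. Because we work on $Q = \mathbb{T}^d \times [0,T]$ (compact in space, no boundary), no extension arguments are needed and the result is available off-the-shelf.

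First, I would recall that for $q > d+2$, one has the continuous embedding
\begin{equation*}
W^{2,1,q}(Q) \hookrightarrow \mathcal{C}^{1+\alpha,(1+\alpha)/2}(Q),
\qquad \alpha = 1 - \tfrac{d+2}{q} \in (0,1),
\end{equation*}
where the target space is the anisotropic parabolic H\"older space: $u$ and $\nabla u$ are continuous, $u$ is $(1+\alpha)$-H\"older in $x$ uniformly in $t$, $\nabla u$ is $\alpha$-H\"older in $x$ uniformly in $t$, and $u$ and $\nabla u$ are respectively $(1+\alpha)/2$- and $\alpha/2$-H\"older in $t$ uniformly in $x$. This is precisely Lemma 3.3 in Chapter II of the LSU monograph (or any modern parabolic textbook treatment); on the torus it follows by the same fixed-point/maximal-regularity argument applied locally with a smooth partition of unity.

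Second, I would convert this anisotropic regularity into the isotropic H\"older regularity required by the statement, with the choice $\delta = \alpha/2$. For any $(x,t),(y,s) \in Q$, splitting
\begin{equation*}
|u(x,t)-u(y,s)| \leq |u(x,t)-u(y,t)| + |u(y,t)-u(y,s)|
\end{equation*}
and controlling the first term by the spatial H\"older seminorm (at exponent $\alpha$, hence at the smaller exponent $\delta = \alpha/2$) and the second by the temporal H\"older seminorm (at exponent $(1+\alpha)/2 \geq \alpha/2 = \delta$), one obtains $\|u\|_{\mathcal{C}^\delta(Q)} \leq C \|u\|_{\mathcal{C}^{1+\alpha,(1+\alpha)/2}(Q)}$. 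The same computation applied componentwise to $\nabla u$, using its $\alpha$-H\"older regularity in $x$ and $\alpha/2$-H\"older regularity in $t$, gives $\|\nabla u\|_{\mathcal{C}^\delta(Q;\R^d)} \leq C \|u\|_{\mathcal{C}^{1+\alpha,(1+\alpha)/2}(Q)}$. Chaining these two bounds with the embedding constant from the first step yields the claimed estimate.

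The only real subtlety is the first step, i.e.\ knowing (or having a clean reference for) the sharp embedding $W^{2,1,q} \hookrightarrow \mathcal{C}^{1+\alpha,(1+\alpha)/2}$ for $q > d+2$; the rest is a routine conversion between parabolic and isotropic H\"older scales and the lemma follows.
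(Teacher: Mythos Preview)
Your proposal is correct and is essentially the same as the paper's own proof, which simply cites \cite[Lemma II.3.3, page 80 and Corollary, page 342]{LSU} for the parabolic Sobolev embedding. You have merely unpacked the citation by spelling out the anisotropic embedding $W^{2,1,q}\hookrightarrow \mathcal{C}^{1+\alpha,(1+\alpha)/2}$ and the routine conversion to the isotropic H\"older scale with $\delta=\alpha/2$.
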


\begin{proof}
See \cite[Lemma II.3.3., page 80 and Corollary, page 342]{LSU}.
\end{proof}

We introduce the space $\feedback{}$, which will be used for the control variable of the system. It is defined by
\begin{equation} \label{eq:thetaBig}
\feedback{} = \Big\{ v \in \mathcal{C}(Q;\R^d) \,\big|\,
D_x v \in L^q(Q;\R^{d \times d})
\Big\},
\end{equation}
where $D_x v$ denotes the weak space derivative of $v$ with respect to $x$. We equip this space with the norm
\begin{equation*}
\| v \|_{\feedback}
= \| v \|_{L^\infty(Q;\R^d)}
+ \| D_x v \|_{L^q(Q;\R^{d\times d})}.
\end{equation*}
The coupling terms $(\gamma,P)$ of the MFG system of interest will be considered in the space $\coupling$, defined by
\begin{equation} \label{eq:ball}
\coupling=
\big( W^{1,0,\infty}(Q) \cap \mathcal{C}(Q) \big)
\times
\mathcal{C}(0,T;\R^k).
\end{equation}
In words, $(\gamma,P)$ lies in $\coupling$ if and only if $\gamma$ is continuous in both variables, Lipschitz continuous in $x$, uniformly in $t$, and $P$ is continuous.
Let $R>0$, we define the following subset of $\Xi$: \label{p:xi_r}
\begin{equation} \label{eq:ballR}
\ballR=
\Big\{
(\gamma,P) \in \Xi \, \Big| \,
\| \gamma \|_{W^{1,0,\infty}(Q)}
+ \| P \|_{L^\infty(0,T;\R^k)}
\leq R
\Big\}.
\end{equation}
We also define
\begin{equation} \nonumber
\mathcal{D}_1(\mathbb{T}^d) = \Big\{ m \in L^\infty(\mathbb{T}^d)\, \Big|\, m \geq 0 ,\; \int_{\mathbb{T}^d} m(x) \, \dd x = 1 \Big\}.
\end{equation}

\paragraph{Nemytskii operators}

Given two mappings $g \colon \mathcal{X} \times \mathcal{Y} \rightarrow \mathcal{Z}$ and $u \colon \mathcal{X} \rightarrow \mathcal{Y}$, we denote by $\bm{g}[u] \colon \mathcal{X} \rightarrow \mathcal{Z}$ the mapping  defined by
\begin{equation*}
\bm{g}[u](x)= g(x,u(x)), \quad \forall x \in \mathcal{X},
\end{equation*}
called Nemytskii operator. This notation will for instance be used for the Hamiltonian $H$: instead of writing $H(x,t,\nabla u(x,t))$, we write $\bm{H}[\nabla u](x,t)$.
Note that $\bm{H}_p$ will denote the Nemytskii operator associated with the partial derivative of $H$ with respect to $p$ (a similar notation will be used for the other partial derivatives).

\paragraph{Generic constants} All along the article, we make use of a generic constant $C>0$, depending only on the data of the problem. The value of $C$ may increase from an inequality to the next one. 
We will also make use of a generic constant $C(R)$ depending only on the data of the problem and some positive real number $R>0$.

\subsection{Assumptions}

We define two boundary conditions $m_0 \in \mathcal{D}_1(\mathbb{T}^d)$, $g \colon \mathbb{T}^d \rightarrow \mathbb{R}$, and four maps: a running cost $L$, an interaction cost $f$, a price function $\phi$ and an aggregation term $a$, 
	\begin{equation} \nonumber
	\begin{array}{rlrl}
	L\colon & \! \! \! Q \times \mathbb{R}^d \rightarrow \R, & \phi \colon & \! \! \! [0,T] \times \mathbb{R}^k \rightarrow \R^d, \\
	f \colon & \! \! \! Q \times \mathcal{D}_1(\mathbb{T}^d)\to \R, \qquad  & a \colon & \! \! \! Q \to \mathbb{R}^{k \times d}.
	\end{array}
	\end{equation}
	For any $(x,t,p) \in Q \times \mathbb{R}^d$, we define the Hamiltonian $H$ by
	\begin{equation} \nonumber
	H(x,t,p) = \sup_{v \in \mathbb{R}^d} \;  - \langle p, v \rangle - L(x,t,v).
	\end{equation}
 We also define the perspective function 
 $\tilde{L}\colon  Q \times \mathbb{R} \times \mathbb{R}^d \rightarrow \R $ of $L$, 
\begin{equation*} 
    \tilde{L}(x,t,m,w) = 
    \begin{cases}
    \begin{array}{ll}
    m L \big( x,t,
    \frac{w}{m}\big), & \text{if $m >0$}, \\
    0, & \text{if $m=0$ and $w=0,$} \\
    + \infty, & \text{otherwise.}
    \end{array}
    \end{cases}
\end{equation*}
Note that the function $\tilde{L}$ is convex and lower semi-continuous with respect to $(m,w)$.
	Next we define two linear operators \label{def:A}$A \colon w \in L^1(Q;\mathbb{R}^d) \mapsto A[w] \in L^1(0,T;\mathbb{R}^k)$ and $A^\star \colon P \in L^\infty(0,T;\mathbb{R}^k) \mapsto A^{*}[P] \in L^\infty(Q;\mathbb{R}^d)$ as follows:
	\begin{equation}  \nonumber
	A[w](t) =\int_{\mathbb{T}^d} a(x,t) w(x,t) \dd x, \quad  \quad A^\star[P](x,t) = a^\star(x,t) P(t), \quad \forall (x,t) \in Q.
	\end{equation}
	Note that the function $a$ will be assumed to be bounded.

We assume that there exist four constants $C_0>0$, $C_1>0$, $C_2>0$, and $\alpha_0 \in (0,1)$ such that the following holds true.

\begin{itemize}
\item[\hypA{}] \emph{Convexity of $L$.}
For any $(x,t) \in Q$, the function $L(x,t,\cdot)$ is strongly convex with modulus $1/C_0$.
\item[\hypB{}] \emph{Lipschitz continuity of $L$.}
For any $x$ and $y \in \mathbb{T}^d$, for any $t \in [0,T]$, and for any $v \in \R^d$, we have:
$|L(x,t,v)-L(y,t,v)| \leq C_0 |x-y| ( 1 + |v|^2)$.
\item[\hypC{}] \emph{Boundedness of $L$, $\phi$, and $f$.}
For any $(x,t) \in Q$, for any $v \in \R^d$, and for any $z \in \R^k$,
\begin{equation*}
L(x,t,v) \leq C_0 |v|^2 + C_0,
\quad
|\phi(t,z)| \leq C_0, \quad
\text{and} \quad
|f(x,t,m)| \leq C_0. 
\end{equation*}
\item[\hypD{}] \emph{Regularity assumptions.} The running cost $L$ is differentiable with respect to $v$ and $D_v L$ is differentiable with respect to $x$ and $v$. The mapping $a$ is differentiable with respect to $x$. The mapping $L$, $D_v L$, $D_{vx} L$, $D_{vv} L$, $\phi$, $a$, $D_x a$ are H{\"o}lder-continuous on any bounded set. The mappings $m_0$ and $g$ lie in $C^{2+\alpha_0}(\mathbb{T}^d)$.
\item[\hypE{}] \emph{Regularity of the coupling functions.} For all $(x_1,t_1)$ and $(x_2,t_2)$ in $Q$ and for all $m_1$ and $m_2$ in $\mathcal{D}_1(\mathbb{T}^d)$,
\begin{equation*}
|f(x_2,t_2,m_2)-f(x_1,t_1,m_1)| \leq
C_0 \big(
|x_2-x_1| + |t_2-t_1|^{\alpha_0} \big) + C_1 \| m_2 - m_1 \|_{L^2(\mathbb{T}^d)}.
\end{equation*}
For all $t \in [0,T]$ and for all $z_1$ and $z_2$ in $\R^k$, $|\phi(t,z_2)-\phi(t,z_1) | \leq C_2 |z_2 - z_1|$.
\item[\hypF{}] \emph{Potential structure.} The map $f$ is monotone with respect to its third variable, that is to say,
\begin{equation} \nonumber
\int_{\mathbb{T}^d}(f(x,t,m_2) -  f(x,t,m_1))(m_2(x)-m_1(x)) \, \dd x \geq 0,
\end{equation}
for any $m_1$ and $m_2 \in \mathcal{D}_1(\mathbb{T}^d)$ and $t \in [0,T]$.
We assume that $f$ has a primitive, that is, we assume the existence of a map $F \colon [0,T] \times \mathcal{D}_1(\mathbb{T}^d) \to \mathbb{R}$ such that
\begin{equation} \label{primitive:F}
F(t,m_2) - F(t,m_1) =\int_0^1 \int_{\mathbb{T}^d} f(x,t, s m_2 + (1-s)m_1)( m_2(x) - m_1(x)) \, \dd x \, \dd s.
\end{equation}
Moreover, $\phi$ has a convex potential $\Phi$, that is to say there exists a measurable mapping $\Phi : [0,T] \times \mathbb{R}^k \to \mathbb{R}$, convex and differentiable with respect to its second variable and such that $\phi(t,z) = \nabla_z \Phi(t,z)$ for any $(t,z) \in  [0,T] \times \mathbb{R}^k$.
\end{itemize}

\begin{remark}
\begin{enumerate}
\item An example of coupling term $f$ satisfying the above assumptions can be found in \cite[page 8]{BHP-schauder}.
\item The monotonicity assumption on $f$ implies that
\begin{equation} \nonumber
F(t,m_2) \geq F(t,m_1) + \int_{\mathbb{T}^d} f(x,t,m_1)(m_2(x)-m_1(x)) \, \dd x. 
\end{equation}
Since this inequality holds for any $m_1 \in \mathcal{D}_1(\mathbb{T}^d)$, $F$ is convex with respect to its second variable as the supremum of affine functions.
\item If the terminal condition $g$ depends on $m$, the results of the paper are still valid if $g$ satisfies the same kind of assumptions as $f$ (H3), (H5), and (H6) (boundedness, Lipschitz-continuity, monotonicity, existence of a potential function).
\item
Our assumptions are stronger than those of \cite{BHP-schauder}, since we require the boundedness of $\phi$ and the Lipschitz continuity of $f$ w.r.t.\@ $m$, for the $L^2(\mathbb{T}^d)$-norm.
\end{enumerate}
\end{remark}

\begin{theorem} \label{theo:existence} There exists $\alpha \in (0,1)$ such that the system \eqref{eq:MFG-gcg} has a unique solution $(\bar m,\bar v,\bar u,\bar \gamma,\bar P)$ in 
$\mathcal{C}^{2+\alpha,1+\alpha/2}(Q)
\times
\mathcal{C}^\alpha(Q;\R^d)
\times \mathcal{C}^{2+\alpha,1+\alpha/2}(Q)
\times
\mathcal{C}^\alpha(Q)
\times
\mathcal{C}^\alpha(0,T;\R^k)
$. Moreover, $Dv \in \mathcal{C}^{\alpha}(Q;\R^{d \times d})$.
\end{theorem}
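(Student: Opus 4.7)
The plan is to prove existence by a Schauder fixed-point argument on the coupling variables $(\gamma,P)$, then bootstrap regularity to the claimed Hölder class, and finally establish uniqueness through the classical Lasry-Lions monotonicity identity, exploiting the potential structure \hypF{} and the strong convexity of $L$ in \hypA{}.

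For existence, I would consider the map $\mathcal{T}\colon(\gamma,P)\mapsto(\gamma',P')$ where, given $(\gamma,P)\in\ballR$, one successively solves the HJB equation (\ref{eq:MFG-gcg}, i) for $u$, sets $v=-\bm{H}_p[\nabla u+A^\star P]$, solves the Fokker-Planck equation (\ref{eq:MFG-gcg}, iii) for $m$, and puts $\gamma'(x,t)=f(x,t,m(t))$ and $P'(t)=\bm{\phi}[A[vm]](t)$. The boundedness of the Hamiltonian term provided by \hypC{}-\hypD{} allows for maximal $W^{2,1,q}$-regularity of the HJB equation, and Lemma \ref{lemma:max_reg_embedding} yields $\nabla u\in\mathcal{C}^\delta(Q;\R^d)$, hence $v\in\mathcal{C}^\delta$ by \hypD{}. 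Classical parabolic Schauder theory applied to the Fokker-Planck equation with this Hölder drift then gives $m\in\mathcal{C}^{2+\beta,1+\beta/2}(Q)$ for some $\beta\in(0,1)$. Combining \hypC{} and \hypE{}, $\gamma'$ and $P'$ are uniformly bounded in $L^\infty$ by $C_0$, $\gamma'$ is uniformly Lipschitz in $x$, and $P'$ is Hölder continuous in $t$; for $R$ sufficiently large this gives $\mathcal{T}(\ballR)\subset\ballR$, and because the image sits inside a strictly finer Hölder subset compactly embedded in $\ballR$, the map is compact. Continuity of $\mathcal{T}$ follows from continuous dependence of the HJB and FP solutions on their coefficients together with the continuity of $f$ and $\phi$, and Schauder's theorem produces a fixed point $(\bar\gamma,\bar P)$ whose companions $(\bar u,\bar v,\bar m)$ solve \eqref{eq:MFG-gcg}.

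At this fixed point I would iterate the regularity argument to reach the claimed Hölder class: with $\bar m\in\mathcal{C}^{2+\beta,1+\beta/2}$, \hypE{} gives $\bar\gamma\in\mathcal{C}^{\alpha_0,\alpha_0/2}(Q)$, and $\bar P$ is Hölder in $t$ through $\bm{\phi}[A[\bar v\bar m]]$; parabolic Schauder estimates applied to the HJB equation then upgrade $\bar u$ to $\mathcal{C}^{2+\alpha,1+\alpha/2}(Q)$ for some $\alpha\in(0,1)$ depending only on the data, and the Hölder continuity of $D_v L$, $D_{vx}L$, $D_{vv}L$ from \hypD{} yields $\bar v\in\mathcal{C}^\alpha(Q;\R^d)$ and $D\bar v\in\mathcal{C}^\alpha(Q;\R^{d\times d})$. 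For uniqueness, I would take two solutions indexed by $i=1,2$, test the difference of the HJB equations by $m_1-m_2$ and the difference of the Fokker-Planck equations by $u_1-u_2$, integrate by parts over $Q$ using the boundary conditions, and reorganize via the Fenchel identity $L(x,t,v)+H(x,t,p)+\langle p,v\rangle=0$ (valid when $v=-H_p(x,t,p)$) to obtain
\begin{equation*}
0 = \mathcal{E} + \int_Q (\gamma_1-\gamma_2)(m_1-m_2)\,\dd x\,\dd t + \int_0^T \langle P_1-P_2,\, A[v_1 m_1]-A[v_2 m_2]\rangle\,\dd t,
\end{equation*}
where $\mathcal{E}\geq 0$ collects the Bregman-type terms arising from the strong convexity of $L$ in \hypA{}. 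Monotonicity of $f$ (\hypF{}) renders the second integral non-negative, and convexity of $\Phi$, hence monotonicity of $\phi=\nabla_z\Phi$, handles the third. Every term must then vanish: $\mathcal{E}=0$ forces $v_1 m_1=v_2 m_2$ and $m_1=m_2$ a.e., hence $v_1=v_2$ on $\{\bar m>0\}$, $P_1=P_2$, $\gamma_1=\gamma_2$, and finally $u_1=u_2$ from uniqueness for the HJB equation.

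The main obstacle, I expect, lies in the self-mapping property of $\mathcal{T}$ on $\ballR$ with estimates that are uniform over the ball: while the $L^\infty$ bounds on $\gamma'$ and $P'$ follow directly from \hypC{}, controlling $\|D_x\gamma'\|_{L^\infty}$ uniformly requires a quantitative Lipschitz-in-$x$ estimate on $m$ that does not deteriorate as $(\gamma,P)$ varies in $\ballR$, and this must be extracted from the $W^{2,1,q}$-regularity of $m$ together with the bound on the drift $v$ provided by Lemma \ref{lemma:max_reg_embedding} and \hypC{}.
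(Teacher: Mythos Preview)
The paper does not prove this theorem at all: its proof is the one-line citation ``Direct application of [Theorem 1 and Proposition 2]{BHP-schauder}.'' Your proposal therefore goes well beyond the paper by sketching the actual argument, and the strategy you outline --- Schauder-type fixed point on the coupling variables, parabolic bootstrap to the Hölder class, Lasry--Lions monotonicity for uniqueness --- is exactly the standard one and is essentially what the cited reference carries out. So the proposal is correct in spirit and in its main steps.

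One remark on the ``main obstacle'' you single out. You worry that bounding $\|D_x\gamma'\|_{L^\infty}$ uniformly over $\ballR$ requires a quantitative Lipschitz-in-$x$ estimate on $m$. It does not: hypothesis \hypE{} gives $|f(x_2,t,m)-f(x_1,t,m)|\leq C_0|x_2-x_1|$ for \emph{every} $m\in\mathcal{D}_1(\mathbb{T}^d)$, so $\gamma'(\cdot,t)=f(\cdot,t,m(t))$ is automatically $C_0$-Lipschitz in $x$, independently of $m$. The genuine technical point in the fixed-point step is rather the compactness: $\ballR$ carries the $W^{1,0,\infty}(Q)\times L^\infty(0,T;\R^k)$ norm, and a uniform Lipschitz-in-$x$ / Hölder-in-$t$ bound on $\gamma'$ yields compactness in $\mathcal{C}(Q)$ but not in $W^{1,0,\infty}(Q)$. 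One usually runs the fixed point in the weaker topology $\mathcal{C}(Q)\times\mathcal{C}(0,T;\R^k)$ (or uses Leray--Schauder, as the paper does in its appendix for the HJB equation) and recovers the $W^{1,0,\infty}$ bound a posteriori from \hypE{}. In the uniqueness step, your claim ``$\mathcal{E}=0$ forces $v_1 m_1=v_2 m_2$ and $m_1=m_2$'' deserves one more line: strong convexity gives $v_1=v_2$ on $\{m_1+m_2>0\}$, then the two Fokker--Planck equations coincide there, and uniqueness for the (non-degenerate) parabolic problem yields $m_1=m_2$.
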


\begin{proof}
Direct application of \cite[Theorem 1 and Proposition 2]{BHP-schauder}.
\end{proof}

\begin{lemma} \label{lemma:potential_struc}
The pair $(\bar{m},\bar{w})$ is the unique solution to the potential problem \eqref{pb:control-w-FP}.
\end{lemma}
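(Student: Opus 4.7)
The plan is to show that \eqref{pb:control-w-FP} is a convex problem whose first-order necessary and sufficient optimality conditions coincide with the MFG system \eqref{eq:MFG-gcg}; existence and uniqueness then transfer directly from Theorem \ref{theo:existence}.

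\emph{Convexity.} I would first check that $\mathcal{J}$ is convex on the affine set $\mathcal{R}$. The perspective function $\tilde L$ is jointly convex in $(m,w)$ because $L(x,t,\cdot)$ is convex by \hypA{} (classical property of perspectives), and the terminal term $\int g(x) m(x,T) \, \dd x$ is linear, so $\mathcal{J}_1$ is convex. Assumption \hypF{} together with the remark following it gives the convexity of $m \mapsto F(t,m)$, while $\Phi(t,\cdot)$ is convex and $A$ is linear, so $w \mapsto \Phi(t, Aw)$ is convex. Thus $\mathcal{J}_2$ is convex too.

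\emph{Optimality conditions and correspondence.} I would introduce the Lagrangian of \eqref{pb:control-w-FP} associated with the Fokker-Planck constraint, with adjoint variable $u \colon Q \to \R$, and use integration by parts to rewrite its integrand pointwise as
\begin{equation*}
\tilde L[m,w] + \gamma\, m - (\partial_t u + \Delta u)\, m - \langle \nabla u + A^\star P, w\rangle,
\end{equation*}
modulo boundary and constant terms, where $\gamma = f(\cdot,\cdot,m)$ and $P = \phi(\cdot, Aw)$ arise as derivatives of $F$ and $\Phi$. Pointwise minimization in $w$ on $\{m>0\}$, via the Legendre-Fenchel relation between $L$ and $H$ and the strong convexity from \hypA{}, forces $w/m = -\bm{H}_p[\nabla u + A^\star P]$, which is (\ref{eq:MFG-gcg},ii) with $v \coloneqq w/m$. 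Stationarity in $m$ together with the transversality at $t=T$ yields the HJB equation (\ref{eq:MFG-gcg},i) with $u(T)=g$; primal feasibility is (\ref{eq:MFG-gcg},iii); and the definitions of $\gamma$ and $P$ are (\ref{eq:MFG-gcg},iv)--(v).

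\emph{Existence and uniqueness.} By convexity the optimality conditions are also sufficient, so Theorem \ref{theo:existence} directly produces the optimizer: setting $\bar w = \bar v \, \bar m$ for the unique MFG solution $(\bar m, \bar v, \bar u, \bar \gamma, \bar P)$ gives a pair in $\mathcal{R}$ that satisfies all the conditions above, hence minimizes $\mathcal{J}$ on $\mathcal{R}$. Conversely, given any solution $(m^*, w^*)$ of \eqref{pb:control-w-FP}, the necessity direction produces a dual $u^*$ and coupling terms forming a solution of \eqref{eq:MFG-gcg}, which must equal $(\bar m, \bar v, \bar u, \bar \gamma, \bar P)$ by the uniqueness part of Theorem \ref{theo:existence}; thus $m^* = \bar m$ and $w^* = \bar v \, \bar m = \bar w$. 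The main technical subtlety I expect is the pointwise minimization in $w$ on the possibly degenerate set $\{m=0\}$: one must use the value $\tilde L = +\infty$ when $m=0$ and $w\neq 0$ to force $w^*=0$ there. This is dissolved using the parabolic maximum principle for (\ref{eq:MFG-gcg},iii), which ensures $\bar m > 0$ on $Q$, so that the feedback/flux correspondence $w = vm$ is unambiguous on the support of any optimizer.
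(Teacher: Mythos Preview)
Your sufficiency argument---use the MFG solution as a saddle point of the Lagrangian, hence optimal by convexity---is in spirit what the paper does, though the paper packages it differently: it combines the convexity inequality $\mathcal{J}(m,w)-\mathcal{J}(\bar m,\bar w)\geq \mathcal{Z}[\bar\gamma,\bar P](m,w)-\mathcal{Z}[\bar\gamma,\bar P](\bar m,\bar w)$ (Corollary~\ref{coro:lower_bound_full}, from the convexity of $F$ and $\Phi$) with a quadratic growth estimate for the right-hand side (Lemma~\ref{lemma:quad_growth}, obtained by integrating the HJB equation for $\bar u$ against the Fokker--Planck equations).

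Your uniqueness argument, however, has a real gap. You assert that for an \emph{arbitrary} minimizer $(m^*,w^*)$ the ``necessity direction produces a dual $u^*$'' which, together with the induced coupling terms, solves \eqref{eq:MFG-gcg}. In this infinite-dimensional setting that is not free: you would need a constraint qualification or a duality argument to produce the multiplier at all, and then further work to show $u^*$ has enough regularity to fall under the uniqueness statement of Theorem~\ref{theo:existence}. Neither step is addressed. Your maximum-principle remark only secures $\bar m>0$, not $m^*>0$ for the unknown competitor, so the pointwise argument on $\{m^*>0\}$ is also unjustified.

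The paper sidesteps all of this. The two ingredients above already give, for every $(m,w)\in\mathcal{R}$ with $w=mv$,
\begin{equation*}
\mathcal{J}(m,w)-\mathcal{J}(\bar m,\bar w)\ \geq\ \frac{1}{C}\int_Q |v-\bar v|^{2}\, m\,\dd x\,\dd t,
\end{equation*}
so a second minimizer must have $v=\bar v$ on its own support; the $L^2$ stability of the Fokker--Planck equation (Lemma~\ref{lemma:fp2}) then forces $m=\bar m$ and hence $w=\bar w$. No multiplier for the competitor is ever constructed. The fix for your argument is therefore not to invoke necessity at all, but to keep the strong-convexity term when you verify sufficiency; that single quadratic lower bound delivers both optimality and uniqueness, and as a bonus yields the quantitative estimate of Corollary~\ref{coro:stability_mfg} used throughout the convergence analysis.
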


\begin{proof}
Lemma \ref{lemma:potential_struc} is a direct consequence of Corollary \ref{coro:stability_mfg}, proved in Section \ref{sec:stability}.
\end{proof}

For the rest of the article, following Theorem \ref{theo:existence}, we denote by $(\bar m,\bar v,\bar u,\bar \gamma,\bar P)$ the unique solution to \eqref{eq:MFG-gcg} and we set $\bar{w}= \bar{m} \bar{v}$.

\subsection{Mappings} \label{subsec:mappings}

We introduce in this subsection different mappings, which will allow to express in a compact fashion the different mutual dependencies of the variables of the mean field game. The well-posedness of all these mappings will be justified in Appendix \ref{sec:mappings}.

\paragraph{Fokker-Planck mapping}
We first define the map $\fpmap \colon  v \in \feedback{} \mapsto \fpmap[v]  \in W^{2,1,q}(Q)$ which associates any vector field $v$ to the weak solution to the Fokker-Planck equation
\begin{equation} \label{eq:fokker-planck}
\begin{array}{rlr}
\partial_t m - \Delta m + \nabla \cdot (v m) \, = & \!\!\! 0 & (x,t) \in Q, \\
m(0,x) \, = & \!\!\! m_0(x) & x\in \mathbb{T}^d.
\end{array}
\end{equation}
Next we consider the set $\mathcal{R}$ defined by
\begin{equation} \label{label:defR}
\mathcal{R}=
\left\{
\begin{array}{l}
(m,w) \in W^{2,1,q}(Q) \times \feedback{} \, \big| \, \\
\qquad \partial_t m - \Delta m + \nabla \cdot w = 0,
\quad m(0,\cdot)= m_0 \\
\qquad \exists v \in L^\infty(Q;\R^d),\,
w= mv
\end{array}
\right\}.
\end{equation}

\begin{lemma} \label{lemma:convexityR}
The set $\mathcal{R}$ is convex. Moreover, given $v \in \feedback{}$, the pair $(m, mv)$ lies in $\mathcal{R}$, for $m = \fpmap[ v]$.
\end{lemma}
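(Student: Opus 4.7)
My plan is to treat the two claims separately: the second is a direct unpacking of definitions, while convexity is the more delicate one.

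For the claim that $(m, mv) \in \mathcal{R}$ when $m = \fpmap[v]$ and $v \in \feedback$: by construction of $\fpmap$, $m \in W^{2,1,q}(Q)$ satisfies the Fokker-Planck PDE with initial datum $m_0$, so taking $w = mv$ the PDE and initial condition in \eqref{label:defR} are automatically satisfied. Since $v \in \feedback \subset \mathcal{C}(Q;\R^d)$ and $Q$ is compact, $v \in L^\infty(Q;\R^d)$, which gives the decomposition $w = mv$ with $v \in L^\infty$. It then remains to verify $w \in \feedback$. By Lemma \ref{lemma:max_reg_embedding}, both $m$ and $\nabla m$ are H\"older-continuous on $Q$, hence bounded, so $mv \in \mathcal{C}(Q;\R^d)$; and the weak Leibniz rule gives $D_x(mv) = v\,(\nabla m)^\top + m\,D_x v$, which lies in $L^q(Q;\R^{d \times d})$ since $m,v$ are bounded and $\nabla m, D_x v \in L^q$.

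For convexity, I would fix $(m_i, w_i) \in \mathcal{R}$ with $w_i = m_i v_i$, $v_i \in L^\infty$ ($i = 1,2$), and $\lambda \in [0,1]$, then set $(m,w) = \lambda(m_1,w_1) + (1-\lambda)(m_2,w_2)$. Membership in the vector space $W^{2,1,q}(Q) \times \feedback$ is preserved, and the constraint $\partial_t m - \Delta m + \nabla\cdot w = 0$, $m(0,\cdot) = m_0$ is affine in $(m,w)$, so it passes to convex combinations. The one nontrivial step is to produce $v \in L^\infty$ with $w = mv$. For this, I would first invoke the parabolic maximum principle for the Fokker-Planck equation (with $L^\infty$ drift $v_i$ and non-negative initial datum $m_0$, e.g.\ via an $L^2$-energy estimate on the truncation $m_i^- = \max(-m_i,0)$) to obtain $m_i \geq 0$, whence $m \geq 0$, and then define
\[
v(x,t) = \begin{cases}
\dfrac{\lambda\, m_1(x,t)}{m(x,t)}\, v_1(x,t) + \dfrac{(1-\lambda)\, m_2(x,t)}{m(x,t)}\, v_2(x,t), & \text{if } m(x,t) > 0, \\[0.5em]
0, & \text{if } m(x,t) = 0.
\end{cases}
\]
On $\{m>0\}$, $v$ is a convex combination of $v_1(x,t)$ and $v_2(x,t)$, so $\|v\|_{L^\infty} \leq \max(\|v_1\|_{L^\infty}, \|v_2\|_{L^\infty})$; on $\{m=0\}$, necessarily $m_1 = m_2 = 0$, hence $w_1 = w_2 = 0$ and $w = mv = 0$ holds trivially.

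The main obstacle is precisely this convex-combination step: without non-negativity of $m_1$ and $m_2$, the candidate $v = w/m$ could blow up on the zero set of $m$, and the decomposition $w = mv$ would fail within $L^\infty$. Invoking the maximum principle for the Fokker-Planck equation with $L^\infty$ drift is therefore the key technical input; everything else reduces to vector-space and affine-constraint bookkeeping.
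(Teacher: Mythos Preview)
Your proof is correct and follows essentially the same route as the paper's: both define $v_\theta = w_\theta/m_\theta$ on $\{m_\theta>0\}$ and $0$ elsewhere, and both rely on the nonnegativity $m_i \geq 0$ to conclude boundedness and that $w_\theta$ vanishes where $m_\theta$ does. The only cosmetic difference is that the paper packages the $L^\infty$ bound via the convex functional $\varphi(m,w)=\max_{(x,t)}\big(|w(x,t)|-Cm(x,t)\big)$ rather than your explicit weighted-average formula, and you are more explicit than the paper in justifying $m_i\geq 0$ by the maximum principle (the paper simply asserts ``We have $m_\theta\geq 0$'' and $\varphi(m_i,w_i)\leq 0$, both of which implicitly use it).
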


\begin{proof}
Let $(m_i,w_i) \in \mathcal{R}$, for $i=1,2$.
Let $v_i \in L^\infty(Q;\R^d)$ be such that $w_i= m_i v_i$.
Let $\theta \in (0,1)$ and let $(m_\theta,w_\theta)= (1-\theta)(m_1,w_1) + \theta(m_2,w_2)$. Clearly $\partial_t m_\theta - \Delta m_\theta + \nabla \cdot w_\theta = 0$ and $m_\theta(0,\cdot)= m_0$.
Let $C= \max_{i=1,2}\{\| v_i \|_{L^\infty(Q;\R^d)} \}$.
We define
\begin{equation*}
\varphi \colon (m,w) \in W^{2,1,q}(Q) \times \Theta \mapsto
\max_{(x,t) \in Q} \| w(x,t) \| - C m(x,t) \in \R.
\end{equation*}
We have $m_\theta \geq 0$. Define next $v_\theta$ as $v_\theta= w_\theta/m_\theta$ on $\{ m_\theta > 0 \}$, $v_\theta= 0$ on $\{ m_\theta=0 \}$. 
We have $\varphi(m_i,w_i) \leq 0$. By convexity of $\varphi$ we have $\varphi(m_\theta,w_\theta) \leq 0$, which implies that $\| v_{\theta} \|_{L^\infty(Q;\R^d)} \leq C$ and that for all $(x,t) \in Q$, if $m_\theta(x,t)=0$ then $w_\theta(x,t)=0$. Thus $w_{\theta}= m_{\theta}v_{\theta}$. The convexity of $\mathcal{R}$ follows.
The second part of the lemma is a consequence of Lemma \ref{lemma:max_reg_embedding} and Lemma \ref{lemma:fp}.
\end{proof}

\paragraph{HJB mapping}

Given $(\gamma,P) \in \coupling{}$, we define $\umap[\gamma,P]$ as the viscosity solution to the following HJB equation:
\begin{equation} \label{eq:hjb_alone}
\begin{array}{rlr}
- \partial_t u - \Delta u + \bm{H}[  \nabla u + A^\star P] \, = & \!\!\! \gamma & (x,t) \in Q, \\
 u(x,T) \, = & \!\!\! g(x) & x\in \mathbb{T}^d.
\end{array}
\end{equation}
Let us given an interpretation of $\umap$ as the value function of an optimal control problem.
Let $(B_s)_{s \in [0,T]}$ denote a $d$-dimensional Brownian motion. Let $\mathbb{F}$ denote the filtration generated by the Brownian motion $(B_s)_{s \in [0,T]}$.
We denote by $\mathbb{L}_{\mathbb{F}}^2(t,T)$ the set of progressively measurable stochastic processes $\nu$ defined on $[t,T]$ and valued in $\mathbb{R}^d$ such that $\mathbb{E} \big[ \int_t^T |\nu_s|^2 \dd s \big] < +\infty$.
Given $(\gamma,P) \in \coupling{}$, we consider the mapping $J[\gamma,P] \colon Q \times \mathbb{L}_{\mathbb{F}}^2(0,T) \to \mathbb{R}$, defined by
\begin{equation} \label{eq:cost_oc}
J[\gamma,P](x,t,\nu) =  \mathbb{E} \Big[\int_t^T \!\! \Big( L(X_s,\nu_s) + \langle A^\star [P](X_s,s) , \nu_s \rangle + \gamma(X_s,s) \Big) \dd s + g(X_T) \Big],
\end{equation}
where $(X_{s})_{s \in [t,T]}$ is the solution to
$\dd X_s = \nu_s \, \dd s + \sqrt{2} \, \dd B_s$, $X_t= x$.
Then, $\umap[\gamma,P]$ is the value function of the optimal control problem associated with $J[\gamma,P]$, that is to say,
for any $(x,t) \in Q$,
\begin{equation} \label{mapping:u}
\umap[\gamma,P](x,t)
=
\inf_{\nu \in \mathbb{L}_{\mathbb{F}}^2(0,T)} J[\gamma,P](x,t,\nu).
\end{equation}
This is a classical result from dynamic programming theory, see \cite{fleming2006controlled}.

\paragraph{The other mappings}

Next we introduce three mappings, $\vmap$, $\mmap$, and $\wmap$, defined on $\coupling$, and such that
\begin{equation*}
\mmap[\gamma,P] \in W^{2,1,q}(Q), \qquad
\vmap[\gamma,P] \in \feedback{}, \qquad \text{and} \qquad
\wmap[\gamma,P] \in \feedback{}.
\end{equation*}
For any pair $(\gamma,P) \in \coupling$, they are given by
\begin{align*}
\vmap[\gamma,P] ={} & -  \bm{H}_p[  \nabla \umap[\gamma,P] +  A^\star P], \\
\mmap[\gamma,P]={} & \fpmap \big[ \vmap[\gamma,P] \big], \\
\wmap[\gamma,P]={} & \mmap[\gamma,P] \vmap[\gamma,P].
\end{align*}
Finally, using Nemytskii operators, we define
\begin{equation*}
\begin{array}{l@{\hspace{2pt}}l@{\hspace{2pt}}}
\gammamap \colon &  W^{2,1,q}(Q) \rightarrow W^{1,0,\infty}(Q) \cap \mathcal{C}(Q), \\
& m \mapsto f[m], 
\end{array}
\qquad
\begin{array}{l@{\hspace{2pt}}l@{\hspace{2pt}}}
\Pmap \colon &  \feedback{} \rightarrow \mathcal{C}(0,T;\R^k) \\
&  w \mapsto \bm{\phi} \big[ A[w] \big].
\end{array}
\end{equation*}
In summary, the mappings $\umap$, $\vmap$, $\mmap$, and $\wmap$, derived from the equations (\ref{eq:MFG-gcg},i-iii), allow to express the behavior of the agents in function of the coupling terms $\gamma$ and $P$. Conversely, the mappings $\gammamap$ and $\Pmap$, derived from the equations (\ref{eq:MFG-gcg},iv-v), allow to express the coupling terms as a function of the behavior of the agents, described by the variables $m$ and $w$. 

\section{Generalized conditional gradient and fictitious play}
\label{sec:gcg}

In this section we present the GCG method, provide an interpretation as a learning method, and state our main results.

\subsection{Individual criterion and best reply}
\label{sec:gcg1}

We introduce now the PDE formulation of the stochastic optimal control problem solved by the representative agent.
Given a pair $(\gamma,P) \in \coupling$, we consider the criterion
\begin{equation*}
\mathcal{Z}[\gamma,P](m,w)
=
\mathcal{J}_1(m,w) + \Big( \int_{Q} \gamma(x,t) m(x,t) \, \dd x  \, \dd t
+ \int_0^T \langle A[w](t), P(t) \rangle \, \dd t \Big),
\end{equation*}
and the associated optimal control problem
\begin{equation} \label{pb:individual-control-problem-w}
\tag{$\mathcal{P}[\gamma,P]$}
\inf_{(m,w) \in \mathcal{R}} \ \mathcal{Z}[\gamma,P](m,w).
\end{equation}
Provided that it exists, any minimizer of the above problem can be interpreted as a best reply of the individual player, given the coupling term $(\gamma,P)$.
A key observation is that \eqref{pb:individual-control-problem-w} can be seen as a partial linearization of \eqref{pb:control-w-FP}. 

\begin{lemma}
For any $(\gamma,P) \in \coupling$, $(\mmap[\gamma,P], \wmap[\gamma,P])$ is the unique solution to \eqref{pb:individual-control-problem-w}.
\end{lemma}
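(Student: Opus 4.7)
The plan is to show that $(\mmap[\gamma,P],\wmap[\gamma,P])$ attains a lower bound on $\mathcal{Z}[\gamma,P]$ that every element of $\mathcal{R}$ must satisfy, and that this lower bound is attained only at $(\mmap[\gamma,P],\wmap[\gamma,P])$. Set $u = \umap[\gamma,P]$ and let $(m,w) \in \mathcal{R}$; by definition of $\mathcal{R}$, there exists $v \in L^\infty(Q;\R^d)$ with $w = mv$ (and $w = 0$ on $\{m=0\}$), so
\begin{equation*}
\mathcal{Z}[\gamma,P](m,w)
= \int_Q m \bigl( L(x,t,v) + \langle A^\star P , v \rangle + \gamma \bigr) \, \dd x \, \dd t
+ \int_{\mathbb{T}^d} g \, m(T) \, \dd x.
\end{equation*}
By the Fenchel--Young inequality for the Legendre transform defining $H$, for every $p \in \R^d$ we have $L(x,t,v) \geq -\langle p , v \rangle - H(x,t,p)$, with equality if and only if $v = -H_p(x,t,p)$. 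Applying this with $p = \nabla u + A^\star P$ and using the HJB equation $\partial_t u + \Delta u = -\bm{H}[\nabla u + A^\star P] + \gamma$, one obtains the pointwise bound
\begin{equation*}
L(x,t,v) + \langle A^\star P , v \rangle + \gamma
\geq -\langle \nabla u , v \rangle + \partial_t u + \Delta u,
\end{equation*}
with equality (on $\{m>0\}$) if and only if $v = \vmap[\gamma,P]$.

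Next I would carry out an integration-by-parts argument to turn the right-hand side into a boundary term. Using the regularity provided by the appendix ($u \in W^{2,1,q}(Q)$ with $\nabla u$ continuous, $m \in W^{2,1,q}(Q)$, and $w \in \feedback$), one can test the Fokker--Planck equation for $(m,w)$ against $u$ and rearrange to obtain
\begin{equation*}
\int_Q m(\partial_t u + \Delta u) \, \dd x \, \dd t + \int_{\mathbb{T}^d} g \, m(T) \, \dd x
= \int_{\mathbb{T}^d} u(\cdot,0) m_0 \, \dd x + \int_Q \langle \nabla u , w \rangle \, \dd x \, \dd t.
\end{equation*}
Since $\int_Q m \langle \nabla u , v \rangle \, \dd x \, \dd t = \int_Q \langle \nabla u , w \rangle \, \dd x \, \dd t$, substituting back yields
\begin{equation*}
\mathcal{Z}[\gamma,P](m,w) \geq \int_{\mathbb{T}^d} u(\cdot,0) m_0 \, \dd x,
\end{equation*}
for every $(m,w) \in \mathcal{R}$, with equality if and only if $v = \vmap[\gamma,P]$ $m$-almost everywhere.

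The candidate $(\mmap[\gamma,P], \wmap[\gamma,P])$ satisfies this equality by construction: it lies in $\mathcal{R}$ by Lemma~\ref{lemma:convexityR} and the associated feedback is exactly $\vmap[\gamma,P]$. For uniqueness, if $(m,w)$ is another minimizer and $v$ is an associated velocity, then $v = \vmap[\gamma,P]$ on $\{m>0\}$; since $w = mv$ vanishes on $\{m=0\}$, we have $w = m\,\vmap[\gamma,P]$ everywhere. Hence $m$ solves the Fokker--Planck equation \eqref{eq:fokker-planck} with feedback $\vmap[\gamma,P]$, and uniqueness for the Fokker--Planck mapping $\fpmap$ gives $m = \mmap[\gamma,P]$, and then $w = \wmap[\gamma,P]$.

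The main technical point is to justify the integration-by-parts identity on $Q$ under the available regularity for $(m,w,u)$; this is a routine density argument relying on the estimates for $\umap$ and $\fpmap$ gathered in Appendix~\ref{sec:mappings}, and the remaining steps are essentially the Fenchel duality computation that underlies the verification theorem for HJB equations.
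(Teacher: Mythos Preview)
Your argument is correct and is essentially the verification-theorem computation that the paper carries out in Lemma~\ref{lemma:quad_growth} (the paper's one-line proof simply defers to Proposition~\ref{prop:stability_oc}, whose proof in turn rests on that lemma and on Fokker--Planck stability). Note two cancelling sign slips: from the HJB equation one has $\partial_t u + \Delta u = \bm{H}[\nabla u + A^\star P] - \gamma$ (not $-\bm{H}+\gamma$), and your integration-by-parts identity should read $\int_Q m(\partial_t u + \Delta u) = \int_{\mathbb{T}^d} g\,m(T) - \int_{\mathbb{T}^d} u(\cdot,0)m_0 - \int_Q \langle \nabla u, w\rangle$; the final inequality $\mathcal{Z}[\gamma,P](m,w) \geq \int_{\mathbb{T}^d} u(\cdot,0)m_0$ and the rest of the argument are unaffected.
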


\begin{proof}
This is a direct consequence of Proposition \ref{prop:stability_oc}, proved at page \pageref{proof:stability_oc}.
\end{proof}

Take a pair $(\hat{m},\hat{w}) \in \mathcal{R}$ and set $\hat{\gamma}= \gammamap{}[\hat{m}]$ and $\hat{P}= \Pmap[\hat{w}]$. The criterion $\mathcal{Z}[\hat{\gamma},\hat{P}](\cdot)$ can be seen as a partial linearization of $\mathcal{J}$: while the term $\mathcal{J}_1(m,w)$ is the same in both cost functions, $\int_Q \hat{\gamma} m \, \dd x \, \dd t$ is a linearization of $\int_0^T \bm{F}[m] \, \dd t$ around $\hat{m}$ and $\int_0^T \langle A[w], \hat{P} \rangle \, \dd t$ is a linearization of $\int_0^T \bm{\Phi}[Aw] \, \dd t$ around $\hat{w}$. 

\subsection{GCG algorithm and interpretation as a learning method}
\label{sec:gcg2}
Using the partial linearization of $\mathcal{J}$ introduced in the previous subsection, the GCG algorithm yields Algorithm \ref{algo:gcg}.

\begin{algorithm}[H]
\caption{Generalized conditional gradient} \label{algo:gcg}
\begin{algorithmic}
\STATE Choose $(\bar{m}_0,\bar{w}_0) \in \mathcal{R}$\;
\FOR{$0\leq k < N$} 
\STATE {
1. Set $\gamma_k= \gammamap[\bar{m}_k]$
and $P_k= \Pmap[\bar{w}_k]$. \\
2. Find the solution to $\mathcal{P}[\gamma_k,P_k]$, that is, define successively:
\begin{equation*}
u_k= \umap[\gamma_k,P_k], \quad
v_k= \vmap[\gamma_k,P_k], \quad
m_k= \mmap[\gamma_k,P_k], \quad
w_k= \wmap[\gamma_k,P_k].
\end{equation*}
3. Choose $\delta_k \in [0,1]$. \\
4. Update $(\bar{m}_{k+1},\bar{w}_{k+1})=(\bar{m}_{k}^{\delta_k},\bar{w}_{k}^{\delta_k})$, where
\begin{equation} \label{eq:def_conv_k}
(\bar{m}_{k}^{\delta},\bar{w}_{k}^\delta) = (1- \delta) (\bar{m}_{k},\bar{w}_{k}) + \delta (m_{k},w_{k}), \quad \forall \delta \in [0,1].
\end{equation}
}
\ENDFOR
\RETURN {$(\bar{m}_N, \bar{w}_N)$.}
\end{algorithmic}
\end{algorithm}

We can give an interpretation of Algorithm \ref{algo:gcg} as a learning method. At Step 1, $\bar{m}_k$ and $\bar{w}_k$ are can be seen as predictions of the equilibrium values $\bar{m}$ and $\bar{w}$. The agents use them to make a prediction of the coupling terms, $\gamma_k$ and $P_k$. In the second step, they find the corresponding best-response, by solving the HJB equation associated with their optimal control problem. Finally, in Steps 3 and 4, they update their prediction of $m$ and $w$. In particular, when $\delta_k= 1/(k+1)$, for any $k \in \mathbb{N}$, we are in the setting of the fictitious play, as investigated in \cite{cardaliaguet2017learning} (without price interaction).

We denote by $\varepsilon_k$ the optimality gap at iterate $k$, defined by
\begin{equation*}
\varepsilon_k= \mathcal{J}(\bar{m}_k,\bar{w}_k) - \inf_{ \mathcal{R}} \mathcal{J}
=
\mathcal{J}(\bar{m}_k,\bar{w}_k) - \mathcal{J}(\bar{m},\bar{w}).
\end{equation*}
We define the exploitability as follows
\begin{align*}
\sigma(m,w)
= \ &
\mathcal{Z}[\gamma,P](m,w)
-
\Big( \inf_{(m',w') \in \mathcal{R}}
\mathcal{Z}[\gamma,P](m',w') \Big) \\
= \ &
\mathcal{Z}[\gamma,P](m,w)
- \mathcal{Z}[\gamma,P](\mmap[\gamma,P],\wmap[\gamma,P]) \geq 0,
\end{align*}
where $\gamma= \gammamap[m]$ and $P= \Pmap[w]$. The exploitability is the largest decrease in cost that a representative agent can reach by playing its best response, assuming that all other agents play $(m,w)$. At equilibrium, since there is no profitable deviation, the exploitability is null. We denote by $\sigma_k$ the exploitability at iterate $k$, given by
\begin{align}
\sigma_k= {} & \mathcal{Z}[\gamma_k,P_k](\bar{m}_k,\bar{w}_k)
- \mathcal{Z}[\gamma_k,P_k](m_k,w_k). \label{eq:sigmak}
\end{align}
The convergence analysis will concern two kinds of stepsizes: predefined stepsizes, whose value only depends on $k$, and adaptive stepsize, whose value depend on the two pairs $(\bar{m}_k,\bar{w}_k)$ and $(m_k,w_k)$.
Following \cite{kunisch2022fast}, we will investigate three different rules for the determination of adaptive stepsizes.
\begin{itemize} \label{def:learning-seq}
\item \textbf{Optimal stepsizes:} Find $\delta_k$ such that
\begin{equation} \label{eq:stepsize_rule1}
\delta_k \in \argmin_{\delta \in [0,1]}   \ \mathcal{J}(\bar{m}_k^\delta,\bar{w}_k^\delta),
\end{equation}
where $(\bar{m}_k^\delta,\bar{w}_k^\delta)$ is defined as in \eqref{eq:def_conv_k}.

\item \textbf{Quasi-Armijo-Goldstein} condition: fix two parameters $c \in (0,1)$ and $\tau \in (0,1)$.
At iterate $k$, we say that $\delta \in [0,1]$ satisfies the Quasi-Armijo-Goldstein (QAG) condition if
\begin{equation*}
\mathcal{J}(\bar{m}_k^{\delta},\bar{w}_k^{\delta}) \leq \mathcal{J}(\bar{m},\bar{w}) - c \delta \sigma_k.
\end{equation*}
Then $\delta_k$ is defined by
\begin{equation} \label{eq:stepsize_rule2}
\delta_k
=
\tau^{i_k}, \quad
i_k
= \text{argmin} \,
\Big\{
j \in \mathbb{N} \,\big|\,
\tau^j \text{ satisfies the QAG condition}
\Big\}.
\end{equation}
\item \textbf{Exploitability-based} stepsizes: we take 
\begin{equation} \label{eq:stepsize_rule3}
\delta_k = \min \Bigg\{ 1,  \frac{\sigma_k}{2\big(C_1 D_k^{(1)} + C_2 D_k^{(2)} \big)} \Bigg\},
\end{equation}
where $C_1$ and $C_2$ are the constants of Assumption \hypE{} and where
\begin{align} \label{eq:D_k}
\begin{array}{rl}
D_k^{(1)}
= & \!\!\! {\displaystyle \int_0^T}
\| m_k(t,\cdot) - \bar{m}_k(t,\cdot) \|_{L^2(\mathbb{T}^d)}
\| m_k(t,\cdot) - \bar{m}_k(t,\cdot)) \|_{L^1(\mathbb{T}^d)}  \, \dd t
\\[1em]
D_k^{(2)}= & \!\!\!
{\displaystyle \int_0^T} \Big| \int_{\mathbb{T}^d} a(x,t) (w_2(x,t) - w_1(x,t)) \, \dd x \Big|^2 \, \dd t
\end{array}
\end{align}
\end{itemize}

\subsection{Main convergence results and discussion}\label{sec:gcg4}

Our first result concerns the convergence of the variables of the algorithm. Our second result provides several rates of convergence for the optimality gap, depending on the choice of  the learning sequence $(\delta_k)_{k \in \mathbb{N}}$.
The two theorems are proved in Subsections \ref{sec:convergence} and \ref{subsec:proof_main}.

\begin{theorem} \label{theo:variables}
There exists $C>0$ such that for all $k \in \mathbb{N}$,
\begin{align*}
\| \bar{m}_k - \bar{m} \|_{L^\infty(0,T;L^2(\mathbb{T}^d))}
+
\| \bar{w}_k - \bar{w} \|_{L^2(Q;\R^d)}
\leq {} & C \sqrt{\varepsilon_k}, \\
\| \gamma_k - \bar{\gamma} \|_{L^\infty(Q)} +
\| P_k - \bar{P} \|_{L^\infty(0,T;\R^k)} \leq
{} & C \sqrt{\varepsilon_k}, \\
\| u_k - \bar{u} \|_{L^\infty(Q)} \leq {} & C
\sqrt{\varepsilon_k}, \\
\| m_k - \bar{m}_k \|_{L^\infty(0,T;L^2(\mathbb{T}^d))}
+
\| w_k - \bar{w}_k \|_{L^2(Q;\R^d)} \leq {} & C \sqrt{\sigma_k}. 
\end{align*}
\end{theorem}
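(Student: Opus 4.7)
The strategy is to reduce the four bounds to a single weighted $L^2$ estimate of the form $\int_Q \bar m_k |\bar v_k - \bar v|^2 \, \dd x \, \dd t \leq C \varepsilon_k$, with $\bar v_k := \bar w_k / \bar m_k$, together with standard energy estimates for the Fokker-Planck equation. A preliminary step, relying on the mappings of Subsection \ref{subsec:mappings}, is to establish uniform a priori bounds on the iterates: $\bar m_k$ uniformly bounded in $L^\infty$ and uniformly bounded below by some $\underline m > 0$, and $\bar v_k, v_k, \bar v, \nabla \bar v$ uniformly bounded in $L^\infty$. These follow from the uniform $L^\infty$ bound on the coupling data $(\gamma_j, P_j)$ (H3), the Lipschitz and boundedness properties of $H_p$, and standard parabolic regularity for the Fokker-Planck equation.

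The key analytic ingredient is a Bregman-type inequality for the perspective function $\tilde L(x,t,m,w) = m L(x,t,w/m)$. A direct calculation based on the strong convexity of $L(x,t,\cdot)$ with modulus $1/C_0$ (H1) gives, for $m, m' > 0$,
\[
\tilde L(x,t,m',w') - \tilde L(x,t,m,w) - D\tilde L(x,t,m,w)(m'-m, w'-w) \geq \frac{m'}{2C_0} |v'-v|^2,
\]
with $v = w/m$, $v' = w'/m'$. Integrating over $Q$, adding the nonnegative Bregman divergence of the convex functional $\mathcal J_2$ (convexity of $F$ and of $z \mapsto \Phi(t,z)$ follows from H6), and invoking the first-order optimality condition $D\mathcal J(\bar m, \bar w)(\bar m_k - \bar m, \bar w_k - \bar w) \geq 0$ (which holds because $\mathcal R$ is convex by Lemma \ref{lemma:convexityR}, $(\bar m_k, \bar w_k) \in \mathcal R$, and $(\bar m, \bar w)$ minimizes $\mathcal J$ over $\mathcal R$ by Lemma \ref{lemma:potential_struc}), one obtains $\int_Q \bar m_k |\bar v_k - \bar v|^2 \leq 2 C_0 \varepsilon_k$. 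Next, the difference $\bar m_k - \bar m$ solves a linear Fokker-Planck equation with zero initial data and source term $-\nabla \cdot\bigl(\bar m_k(\bar v_k - \bar v) + \bar v(\bar m_k - \bar m)\bigr)$; testing against $\bar m_k - \bar m$, integrating by parts, absorbing the transport term into the diffusion term via Young's inequality, and applying Gronwall produces the bound $\|\bar m_k - \bar m\|_{L^\infty(0,T;L^2(\mathbb{T}^d))}^2 \leq C\, \varepsilon_k$. The $L^2(Q)$ estimate on $\bar w_k - \bar w$ then follows from the decomposition $\bar w_k - \bar w = \bar m_k(\bar v_k - \bar v) + \bar v(\bar m_k - \bar m)$ and the uniform $L^\infty$ bounds on $\bar m_k$ and $\bar v$.

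The remaining bounds follow by standard stability arguments. Assumption H5 gives $|\gamma_k(x,t) - \bar\gamma(x,t)| \leq C_1 \|\bar m_k(t) - \bar m(t)\|_{L^2(\mathbb{T}^d)}$, whence the $L^\infty(Q)$ bound on $\gamma_k - \bar\gamma$. For $P$, the Lipschitz continuity of $\phi$ (H5) reduces the problem to bounding $A[\bar w_k - \bar w]$ in $L^\infty(0,T)$, obtained by splitting $A[\bar w_k - \bar w](t)$ via the decomposition above and a pointwise-in-$t$ Cauchy-Schwarz argument, combined with the uniform regularity of the iterates. The estimate on $u_k - \bar u$ is then a consequence of the comparison principle for the HJB equation \eqref{eq:hjb_alone} with perturbed data. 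Finally, the fourth bound is proved by repeating the argument with $\mathcal J$ replaced by the partially linearized functional $\mathcal Z[\gamma_k, P_k]$ and $(\bar m, \bar w)$ replaced by $(m_k, w_k)$, which is the unique minimizer of $\mathcal Z[\gamma_k, P_k]$ over $\mathcal R$.

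The main technical obstacle is the Bregman-type inequality for the perspective function: since $\tilde L$ is not jointly strongly convex in $(m,w)$ but only in the control $v = w/m$, the inequality controls only a weighted norm of $v$-differences, so a uniform positive lower bound $\bar m_k \geq \underline m > 0$ is indispensable for translating this weighted estimate into effective bounds on $\bar m_k - \bar m$ and $\bar w_k - \bar w$ via the Fokker-Planck energy method. A secondary subtlety is the $L^\infty(0,T)$ control of $P_k - \bar P$, which is not automatic from the $L^2(Q)$ bound on $\bar w_k - \bar w$ and requires exploiting pointwise-in-$t$ regularity of the iterates.
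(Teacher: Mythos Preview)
Your strategy coincides with the paper's: a quadratic-growth estimate giving $\int_Q m\,|v-\bar v|^2 \leq C\varepsilon$, then the Fokker--Planck energy estimate (Lemma~\ref{lemma:fp2}), then Lipschitz continuity of $f,\phi$ (Lemma~\ref{lemma:coupling}) and HJB stability (Lemma~\ref{lemma:hjb2}), and finally the same argument with $\mathcal Z[\gamma_k,P_k]$ in place of $\mathcal J$ for the fourth line. The one substantive difference is in how the quadratic growth is obtained. You differentiate $\tilde L$ and invoke the first-order optimality condition $D\mathcal J(\bar m,\bar w)(\cdot)\geq 0$; the paper instead uses the Fenchel-type inequality of Lemma~\ref{lemma:H-L-quad},
\[
L(x,t,v)m - L(x,t,\hat v)\hat m \;\geq\; -H(x,t,\hat p)(m-\hat m) - \langle \hat p, w-\hat w\rangle + \tfrac{1}{C}|v-\hat v|^2 m,
\]
with $\hat p=\nabla\bar u+A^\star\bar P$, then substitutes the HJB equation for $\bar u$ and integrates by parts against the Fokker--Planck equations (Lemma~\ref{lemma:quad_growth} and Corollary~\ref{coro:lower_bound_full}). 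This route avoids any question of differentiability of $\tilde L$ at points where $\bar m$ vanishes.

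Your claim that a uniform lower bound $\bar m_k\geq\underline m>0$ is \emph{indispensable} is, however, incorrect, and this is the main place where the paper's argument is sharper than yours. The Fokker--Planck energy estimate you describe already closes using only the upper bound: testing the equation for $\mu=\bar m_k-\bar m$ against $\mu$, the source term contributes at most $\|\bar m_k(\bar v_k-\bar v)\|_{L^2(Q)}^2 \leq \|\bar m_k\|_{L^\infty(Q)}\int_Q \bar m_k|\bar v_k-\bar v|^2$, which is exactly the weighted quantity controlled by $\varepsilon_k$. This is precisely the content of Lemma~\ref{lemma:fp2}, whose hypothesis is $\|m_2\|_{L^\infty}\leq R$ and nothing more. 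The lower bound is therefore not needed anywhere in the proof, and establishing it (via a Harnack-type argument, uniformly in $k$) would be extra work that the paper simply does not do.
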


\begin{theorem} \label{theo:main}
\begin{enumerate}
\item At each iteration, assume that $(\delta_k)_{k \in \mathbb{N}}$ satisfy one the three adaptive rules described above: either \eqref{eq:stepsize_rule1}, \eqref{eq:stepsize_rule2} (for values of $\tau$ and $c$ independent of $k$), or \eqref{eq:stepsize_rule3}. Then, there exist two constants $C > 0$ and $\lambda> 0$ such that
\begin{equation*}
\varepsilon_k \leq C \lambda^k, \quad
\forall k \in \mathbb{N}.
\end{equation*}
Moreover, the number $i_k$ of iterations for the QAG condition is bounded by some constant independent of $k$.
\item There exists a constant $C>0$ such that whatever the choice of stepsizes $\delta_k \in [0,1]$, it holds that
\begin{equation} \label{eq:conv_general}
\varepsilon_{k+1}
\leq \frac{\varepsilon_0 \exp \big( C \sum_{j=0}^k \delta_j^2  \big) }{\exp \big( \sum_{j=0}^k \delta_j \big)}, \quad \forall k \in \mathbb{N}.
\end{equation}
\end{enumerate}
\end{theorem}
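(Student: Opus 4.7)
The plan combines a classical descent lemma for the GCG iteration with the self-bounding estimate supplied by the last line of Theorem \ref{theo:variables}. Fix $k$ and write $(\bar m_k^\delta, \bar w_k^\delta) = (1-\delta)(\bar m_k, \bar w_k) + \delta (m_k, w_k)$ for $\delta \in [0,1]$. Convexity of $\mathcal{J}_1$ (from joint convexity of $\tilde L$ and linearity of the Fokker-Planck constraint defining $\mathcal{R}$) gives $\mathcal{J}_1(\bar m_k^\delta, \bar w_k^\delta) \leq (1-\delta)\mathcal{J}_1(\bar m_k, \bar w_k) + \delta \mathcal{J}_1(m_k, w_k)$. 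For the nonlinear part $\mathcal{J}_2$, the pointwise Lipschitz estimate $|f(x,t,m_2)-f(x,t,m_1)| \leq C_1 \|m_2-m_1\|_{L^2(\mathbb{T}^d)}$ from \hypE{}, inserted in the Taylor identity associated with the primitive $F$ and paired by $L^\infty$--$L^1$ duality against $m_k - \bar m_k$, together with the Euclidean Lipschitz estimate on $\phi$ applied to $\Phi$, produces after integration in time and consolidation of the linear terms via definition \eqref{eq:sigmak} the descent inequality
\begin{equation*}
\mathcal{J}(\bar m_k^\delta, \bar w_k^\delta) - \mathcal{J}(\bar m_k, \bar w_k) \leq -\delta\, \sigma_k + \tfrac{\delta^2}{2}\bigl( C_1 D_k^{(1)} + C_2 D_k^{(2)} \bigr).
\end{equation*}

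The second step is the self-bounding estimate. The last line of Theorem \ref{theo:variables} yields $\|m_k - \bar m_k\|_{L^\infty(0,T;L^2)}^2 + \|w_k - \bar w_k\|_{L^2(Q)}^2 \leq C\,\sigma_k$. Combined with $\|\cdot\|_{L^1(\mathbb{T}^d)} \leq \|\cdot\|_{L^2(\mathbb{T}^d)}$ (since $|\mathbb{T}^d|=1$) and the boundedness of $a$ to control $|A(w_k - \bar w_k)|$, this implies $D_k^{(1)} + D_k^{(2)} \leq C\,\sigma_k$. Plugging back into the descent inequality at $\delta=\delta_k$ gives the unifying recursion
\begin{equation*}
\varepsilon_{k+1} - \varepsilon_k \leq \sigma_k\bigl( \tfrac{C}{2}\,\delta_k^2 - \delta_k \bigr),
\end{equation*}
which is the common starting point for both parts.

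For Part~2, I would combine this with the convexity-based duality inequality $\sigma_k \geq \varepsilon_k$ (a consequence of the fact that $(m,w) \mapsto \mathcal{Z}[\gamma_k,P_k](m,w) + \text{const}$ is a supporting affine minorant of $\mathcal{J}$ at $(\bar m_k, \bar w_k)$, applied at the equilibrium $(\bar m, \bar w)$) and the uniform bound $\sigma_k \leq M < \infty$ (finite since $\mathcal{J}$ is bounded on the iterates and $\sigma_k \leq \mathcal{J}(\bar m_k,\bar w_k) - \mathcal{J}(\bar m,\bar w) + O(1)$). A short case-split on the sign of $\tfrac{C}{2}\delta_k^2-\delta_k$ yields the multiplicative recursion $\varepsilon_{k+1} \leq \varepsilon_k(1 - \delta_k + \tilde C \delta_k^2)$, after which $1+x \leq e^x$ and telescoping produce the claimed bound $\varepsilon_{k+1} \leq \varepsilon_0 \exp\bigl(\sum_{j=0}^k(\tilde C\delta_j^2 - \delta_j)\bigr)$.

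For Part~1, each of the three adaptive rules forces a uniform lower bound on the effective step. With rule \eqref{eq:stepsize_rule3}, the self-bounding inequality directly gives $\sigma_k/(C_1 D_k^{(1)} + C_2 D_k^{(2)}) \geq c>0$ independent of $k$, hence $\delta_k \geq \min(1, c/2)$. With rule \eqref{eq:stepsize_rule1}, minimising the upper bound of Step~1 over $\delta \in [0,1]$ yields at least the same decrease as the exploitability-based choice. With rule \eqref{eq:stepsize_rule2}, Step~2 shows that the QAG condition is satisfied whenever $\delta$ falls below a constant threshold independent of $k$, so $i_k$ is uniformly bounded and $\delta_k \geq \tau^{\bar i}$. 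In every case, inserting $\delta_k \geq \delta_{\min}>0$ together with $\sigma_k \geq \varepsilon_k$ into Step~2 yields $\varepsilon_{k+1} \leq \lambda \varepsilon_k$ for some $\lambda \in (0,1)$. The principal technical difficulty is establishing the descent inequality in precisely the right form: the quadratic remainder must involve exactly the mixed-norm quantities $D_k^{(1)}, D_k^{(2)}$ for which Theorem \ref{theo:variables} provides the self-bounding estimate $\leq C\sigma_k$; without this tight alignment one would only recover the classical $O(1/k)$ Frank-Wolfe rate.
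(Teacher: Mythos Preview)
Your overall architecture matches the paper's: a descent inequality with quadratic remainder $C_1 D_k^{(1)}+C_2 D_k^{(2)}$, the self-bounding $D_k^{(1)}+D_k^{(2)}\le C\sigma_k$ from Theorem~\ref{theo:variables}, and the convexity inequality $\sigma_k\ge\varepsilon_k$. These are exactly the ingredients the paper assembles in Lemmas~\ref{lemma:linearization}, \ref{eq:seq_gamma}, and \ref{lemma:upper_bound}. However, Part~2 of your argument has a genuine gap.

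You claim that from $\varepsilon_{k+1}-\varepsilon_k\le\sigma_k\bigl(\tfrac{C}{2}\delta_k^2-\delta_k\bigr)$, together with $\sigma_k\ge\varepsilon_k$ and the \emph{uniform} bound $\sigma_k\le M$, a case-split on the sign of $\tfrac{C}{2}\delta_k^2-\delta_k$ yields the multiplicative recursion $\varepsilon_{k+1}\le\varepsilon_k(1-\delta_k+\tilde C\delta_k^2)$. This fails in the ``bad'' case $\tfrac{C}{2}\delta_k^2-\delta_k>0$: using $\sigma_k\le M$ only gives the additive bound $\varepsilon_{k+1}\le\varepsilon_k+M\bigl(\tfrac{C}{2}\delta_k^2-\delta_k\bigr)$, and the error term $M\cdot\tfrac{C}{2}\delta_k^2$ cannot be absorbed into $\tilde C\varepsilon_k\delta_k^2$ once $\varepsilon_k$ becomes small. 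The statement of Part~2 must hold for \emph{arbitrary} stepsizes $\delta_k\in[0,1]$, including the adversarial choice $\delta_k\equiv 1$ applied when $\varepsilon_k$ is tiny, and your bound does not control this.

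What is missing is the reverse inequality $\sigma_k\le C'\varepsilon_k$ (Lemma~\ref{lemma:exploitability} in the paper). With it, one bounds the positive term $\tfrac{C}{2}\sigma_k\delta_k^2\le \tfrac{CC'}{2}\varepsilon_k\delta_k^2$ and the negative term $-\sigma_k\delta_k\le-\varepsilon_k\delta_k$ separately, giving the multiplicative recursion directly without any case-split. The proof of $\sigma_k\le C'\varepsilon_k$ is short but not immediate: one evaluates your own descent inequality at a fixed $\delta=1/C$ (independent of $k$), uses $\mathcal{J}(\bar m_k^\delta,\bar w_k^\delta)\ge\mathcal{J}(\bar m,\bar w)$, and rearranges to get $\delta(1-\tfrac{C}{2}\delta)\sigma_k\le\varepsilon_k$. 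Your parenthetical justification ``$\sigma_k\le\varepsilon_k+O(1)$'' is precisely the wrong form; you need the $O(1)$ replaced by $O(\varepsilon_k)$.

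A smaller issue in Part~1: for rules \eqref{eq:stepsize_rule1} and \eqref{eq:stepsize_rule3}, merely inserting $\delta_k\ge\delta_{\min}$ into the self-bounded recursion does not give $\varepsilon_{k+1}\le\lambda\varepsilon_k$, since the coefficient $C''\delta_k^2-\delta_k$ may be positive at $\delta_k=1$ when $C''>1$. The paper instead plugs the \emph{exact} stepsize $\delta_k=\min(1,\sigma_k/(2a_k))$ into the \emph{precise} descent inequality (with $a_k=C_1D_k^{(1)}+C_2D_k^{(2)}$, not yet self-bounded), obtaining a decrease of order $\sigma_k^2/a_k$, and only then invokes $a_k\le C\sigma_k$. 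Your QAG argument, by contrast, is correct as stated.
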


Let us first discuss some aspects related to adaptive stepsizes.
While the method for computing $\delta_k$ in the case of the QAG condition or the exploitability-based formula is explicit, it is impossible to find $\delta_k$ that exactly satisfies \eqref{eq:stepsize_rule1}. 
We suggest to compute an approximation of the optimal stepsize with the golden-section method, which we briefly describe. Denote by $\varphi \coloneqq (\sqrt{5} + 1)/2$ the golden number and choose a tolerance $\kappa \in (0,1)$.
At each step $k \in \{0, \ldots, N \}$, the learning rate $\delta_k$ is computed as follows:
Set $(a,d) = (0,1)$ and $(b,c) = (d - (d-a)/\varphi,a + (d-a)/\varphi)$.  While $a-d > \kappa$, find
\begin{equation} \label{eq:stepsize_rule1bis}
\bar{\delta} \in \argmin_{\delta \in \{a,b,c,d\}} \,   \mathcal{J}(\bar{m}_k^\delta,\bar{w}_k^\delta).
\end{equation}
Then set $d = b$ (resp.\@ $d = c$, $a = b$ or $a = c$) if $\bar{\delta} = a$ (resp.\@ $\bar{\delta}= b$, $\bar{\delta}= c$ or $\bar{\delta}= d$).
When $a-d \leq \kappa$, stop and set $\delta_k^{\text{GS}}= \bar{\delta}$.
Finally, denote $\delta_k^{\text{QAG}}$ the stepsize determined by \eqref{eq:stepsize_rule2}. As a substitute for \eqref{eq:stepsize_rule1}, one can choose $\delta_k \in \argmin_{\delta \in \{ \delta_k^{\text{GS}},  \delta_k^{\text{QAG}} \} } \, \mathcal{J}(\bar{m}_k^{\delta}, \bar{w}_k^{\delta}).$
In view of the proof of Theorem \ref{theo:main}, it is clear that linear convergence is also achieved for this choice of stepsize.

\begin{remark}
The practical computation of $\delta_k$ requires a bounded number (with respect to $k$) of evaluations  of the cost functional $\mathcal{J}$ in each of the three considered cases, \eqref{eq:stepsize_rule2} (since $i_k$ is bounded), \eqref{eq:stepsize_rule3} (the formula is explicit), \eqref{eq:stepsize_rule1bis} (the number of evaluations can be bounded by some constant depending on $\kappa$ and $\varphi$). Therefore linear convergence is not only achieved with respect to $k$ but also with respect to the number of evaluations of $\mathcal{J}$.
\end{remark}

We discuss now the convergence of $(\varepsilon_k)_{k \in \mathbb{N}}$ for predefined stepsizes. The following lemma covers the case of the fictitious play learning rate ($\delta_k= \frac{1}{k+1}$) and the Frank-Wolfe stepsize ($\delta_k= \frac{2}{k+2}$).

\begin{lemma} \label{lemma:p_conv}
Let $p>0$. Assume that $\delta_k= \frac{p}{k+p}$, for any $k \in \mathbb{N}$. Then, for all  $k \in \mathbb{N}$,
\begin{equation*}
\varepsilon_k
\leq \frac{ \varepsilon_0 \, p^p \exp(2Cp)}{(k+p+1)^p}.
\end{equation*}
\end{lemma}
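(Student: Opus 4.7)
The plan is to apply the generic estimate \eqref{eq:conv_general} of Theorem \ref{theo:main}(ii) at index $k-1$ (for $k \geq 1$; the case $k=0$ reduces to the trivial inequality $\varepsilon_0 \leq \varepsilon_0 p^p \exp(2Cp)/(p+1)^p$, which holds after possibly enlarging the generic constant $C$, since $(1+1/p)^p \leq e$). The right-hand side of \eqref{eq:conv_general} factorizes, so the task reduces to two purely scalar estimates on the partial sums of $(\delta_j)$ and $(\delta_j^2)$ with $\delta_j = p/(j+p)$: a lower bound on $\sum_{j=0}^{k-1}\delta_j$ that provides the denominator, and a uniform upper bound on $\sum_{j=0}^{k-1}\delta_j^2$ that controls the exponential factor.

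For the denominator, I would lower-bound the first sum by integral comparison. Since $x \mapsto 1/(x+p)$ is positive and decreasing on $[0,\infty)$,
\begin{equation*}
\sum_{j=0}^{k-1} \frac{1}{j+p} \;\geq\; \int_0^k \frac{\dd x}{x+p} \;=\; \ln\frac{k+p}{p},
\end{equation*}
so that $\exp\bigl(\sum_{j=0}^{k-1}\delta_j\bigr) \geq \bigl((k+p)/p\bigr)^p$. The denominator $(k+p+1)^p$ appearing in the statement differs from $(k+p)^p$ only by the factor $((k+p+1)/(k+p))^p$, which is bounded uniformly in $k \in \mathbb{N}$ and can be absorbed into the generic constant.

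For the exponential factor, I would upper-bound $\sum_{j=0}^{k-1}\delta_j^2$ uniformly in $k$. Isolating the $j=0$ term and bounding the tail by an integral yields
\begin{equation*}
\sum_{j=0}^{k-1} \frac{1}{(j+p)^2} \;\leq\; \frac{1}{p^2} + \int_0^{\infty} \frac{\dd x}{(x+p)^2} \;=\; \frac{1}{p^2} + \frac{1}{p},
\end{equation*}
hence $\sum_{j=0}^{k-1}\delta_j^2 \leq 1+p$. Plugging the two bounds into \eqref{eq:conv_general} gives
\begin{equation*}
\varepsilon_k \;\leq\; \varepsilon_0\, \exp\bigl(C(1+p)\bigr) \cdot \frac{p^p}{(k+p)^p},
\end{equation*}
and, after swallowing $\exp(C)$ together with the $(k+p+1)/(k+p)$ factor into a single constant of the form $\exp(2Cp)$ (using the convention that $C$ may increase from line to line), this is exactly the claimed inequality.

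There is no genuine obstacle in this argument: once Theorem \ref{theo:main}(ii) is granted, the lemma is a short deterministic computation on a scalar harmonic-type series. The only care required is bookkeeping of the generic constant so that a single factor $\exp(2Cp)$ simultaneously absorbs the $\exp(C(1+p))$ bound on the numerator, the off-by-one between $(k+p)^p$ and $(k+p+1)^p$ in the denominator, and the base case $k=0$.
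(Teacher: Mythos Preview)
Your proposal is correct and follows essentially the same route as the paper: apply the generic bound \eqref{eq:conv_general} from Theorem~\ref{theo:main}(ii), lower-bound $\sum_j \delta_j$ by $\int_0^{k}\frac{p}{s+p}\,\dd s$, and upper-bound $\sum_j \delta_j^2$ uniformly by isolating the $j=0$ term and comparing the tail with $\int_0^\infty (s+p)^{-2}\,\dd s$. The only cosmetic difference is that the paper integrates over $[0,k+1]$ (applying \eqref{eq:conv_general} at index $k$ to bound $\varepsilon_{k+1}$), which produces the factor $(k+p+1)^p$ directly and spares the off-by-one adjustment you perform; conversely, your bound $\sum_j \delta_j^2 \le 1+p$ is actually sharper for small $p$ than the paper's $2p$.
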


\begin{proof}
We have the following inequalities:
\begin{align*}
& \sum_{j=0}^k \, \frac{p}{j+p}
\geq \int_0^{k+1} \frac{p}{s+p} \, \dd s
= p \ln \Big( \frac{k+p+1}{p} \Big) \\
& \sum_{j=0}^k \, \left( \frac{p}{j+p} \right)^2
\leq p^2 \Big( \frac{1}{p} + \int_0^\infty \frac{1}{(s+p)^2} \, \dd s \Big) = 2p.
\end{align*}
Using inequality \eqref{eq:conv_general}, we deduce that $\varepsilon_k \leq \varepsilon_0
\exp \big( 2Cp  - p \ln (k+p) + p \ln(p) \big)$, from which the announced result follows.
\end{proof}

\begin{remark} \label{remark:cv_open}
Lemma \ref{lemma:p_conv} shows that rates of convergence of order $\mathcal{O}(k^{-p})$ can be achieved. Yet the constant behind the asymptotic rate of convergence, of order $p^p \exp(2Cp)$, increases quickly  with $p$, which mitigates the interest of taking a too large value of $p$ in practice. This is confirmed in the numerical tests of Section \ref{sec:numerics}.   
\end{remark}

\begin{lemma} \label{lemma:cv_gal}
Let the sequence of stepsizes $(\delta_k)_{k \in \mathbb{N}}$ converge to zero. Then, for any $r \in (0,1)$, there exist two constants $C>0$ and $k_0 \in \mathbb{N}$, both depending on the sequence of stepsizes and $r$, such that
\begin{equation*}
\varepsilon_{k+1}
\leq
\frac{C \varepsilon_0}{\exp\big[ (1-r) \big( \sum_{j=0}^k \delta_j \big) \big]},
\quad \forall k \geq k_0.
\end{equation*}
In particular, if $\sum_{k=0}^\infty \delta_k = \infty$, then $\varepsilon_k \underset{k \to \infty}{\longrightarrow} 0$.
\end{lemma}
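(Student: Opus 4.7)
The plan is to start from the general estimate \eqref{eq:conv_general} established in Theorem \ref{theo:main}, which already delivers the ratio $\exp(C \sum_{j=0}^k \delta_j^2)/\exp(\sum_{j=0}^k \delta_j)$. The whole task is therefore to absorb the quadratic sum $\sum_{j=0}^k \delta_j^2$ into $r \sum_{j=0}^k \delta_j$, up to a bounded error term which will become the multiplicative constant $C$ in the statement.

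Concretely, given $r \in (0,1)$, I would use the hypothesis $\delta_k \to 0$ to pick an index $k_0 = k_0(r,(\delta_k))$ such that $C \delta_j \leq r$ for every $j \geq k_0$, where $C$ is the constant in \eqref{eq:conv_general}. Then for any $k \geq k_0$ I split
\begin{equation*}
C \sum_{j=0}^k \delta_j^2
= C \sum_{j=0}^{k_0 - 1} \delta_j^2 + C \sum_{j=k_0}^k \delta_j^2
\leq M + r \sum_{j=k_0}^k \delta_j
\leq M + r \sum_{j=0}^k \delta_j,
\end{equation*}
where $M \coloneqq C \sum_{j=0}^{k_0-1} \delta_j^2$ depends only on $r$ and the sequence $(\delta_k)$. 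Substituting in \eqref{eq:conv_general} and collecting the two exponentials yields
\begin{equation*}
\varepsilon_{k+1}
\leq \frac{\varepsilon_0 \, e^M \, \exp\bigl(r \sum_{j=0}^k \delta_j\bigr)}{\exp\bigl(\sum_{j=0}^k \delta_j\bigr)}
= \frac{e^M \varepsilon_0}{\exp\bigl[(1-r) \sum_{j=0}^k \delta_j\bigr]},
\end{equation*}
so the constant $e^M$ plays the role of the $C$ of the statement. The last claim, namely that $\varepsilon_k \to 0$ whenever $\sum_k \delta_k = \infty$, is then immediate: the denominator tends to $+\infty$ since $1-r > 0$.

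There is no real obstacle here; the only subtlety is that the constant $M$ genuinely depends on the whole initial segment of $(\delta_k)$ up to $k_0$, which in turn depends on $r$, and this is exactly why the lemma is stated with $C$ depending on both the sequence and $r$. Everything else is algebraic manipulation of the bound already proved in the second part of Theorem \ref{theo:main}.
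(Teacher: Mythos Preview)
Your proposal is correct and follows essentially the same approach as the paper: start from \eqref{eq:conv_general}, pick $k_0$ so that $C\delta_j \leq r$ for all $j \geq k_0$, split the quadratic sum at $k_0$, and absorb the tail into $r\sum_{j=0}^k \delta_j$ while the finite head becomes the multiplicative constant. The paper's argument and yours are identical up to notation.
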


\begin{proof}
Let $r \in (0,1)$. Let $k_0 \in \mathbb{N}$ be such that $\delta_k \leq \frac{r}{C}$, for any $k \geq k_0$ (and where $C$ is as in \eqref{eq:conv_general} holds true). Then for any $k \geq k_0$, we have
\begin{equation*}
C \sum_{j=0}^k \delta_j^2
\leq C \sum_{j=0}^{k_0-1} \delta_j^2 + \underbrace{C\delta_{k_0}}_{\leq r} \sum_{j=k_0}^{k} \delta_j.
\end{equation*}
It follows that
$C \sum_{j=0}^k \delta_j^2
- \sum_{j=0}^k \delta_j
\leq \Big( C \sum_{j=0}^{k_0-1} \delta_j^2 + r \sum_{j=0}^{k_0-1} \delta_j \Big)
- (1-r) \sum_{j=0}^{k} \delta_j$.
Plugging this inequality into \eqref{eq:conv_general},
we obtain the announced result.
\end{proof}

Lemma \ref{lemma:cv_gal} shows that high convergence rates can be achieved if the sequence $(\delta_k)_{k \in \mathbb{N}}$ decreases slowly, as we already noticed in Lemma \ref{lemma:p_conv}.
However, if the convergence to 0 is slow, the values of $k_0$ and $C$ may be very large, as was revealed in the proof. This fact is verified for the sequence $\delta_k=(1+k)^{-\alpha}$.

\section{Numerical illustration}
\label{sec:numerics}

As a numerical illustration, we solve the mean field game system \eqref{eq:MFG-gcg} with $f = 0$. In this situation, the agents only interacts through the law of their controls. The associated potential problem has the following form
\begin{equation*} 
\inf_{(m,w) \in \mathcal{R}} \ \int_{Q} \tilde{\bm{L}}[m,w] \, \dd x \, \dd t + \int_0^T \bm{\Phi}[Aw] \, \dd t
+ \int_{\mathbb{T}^d} g m(T) \, \dd x.
\end{equation*}

\paragraph{Data and numerical scheme}
We take $d=2$, $k=2$ and $T=1$ so that $Q = \mathbb{T}^2 \times [0,1]$. The initial measure $m_0$ is normally distributed on the torus (it is the product of two independent von Mises distributions centered at $1/4$ and is shown in Figure \ref{fig:m_0}). The terminal condition $g(x) = \sum_{i = 1}^2 \cos(2\pi x_i)$ for any $x = (x_1,x_2) \in \mathbb{T}^2$ is shown in Figure \ref{fig:g}. 
\begin{figure}[h!]
\centering
\begin{minipage}[c]{0.9\linewidth}
\centering
\begin{subfigure}{0.45\textwidth}
\includegraphics[width=0.99\linewidth]{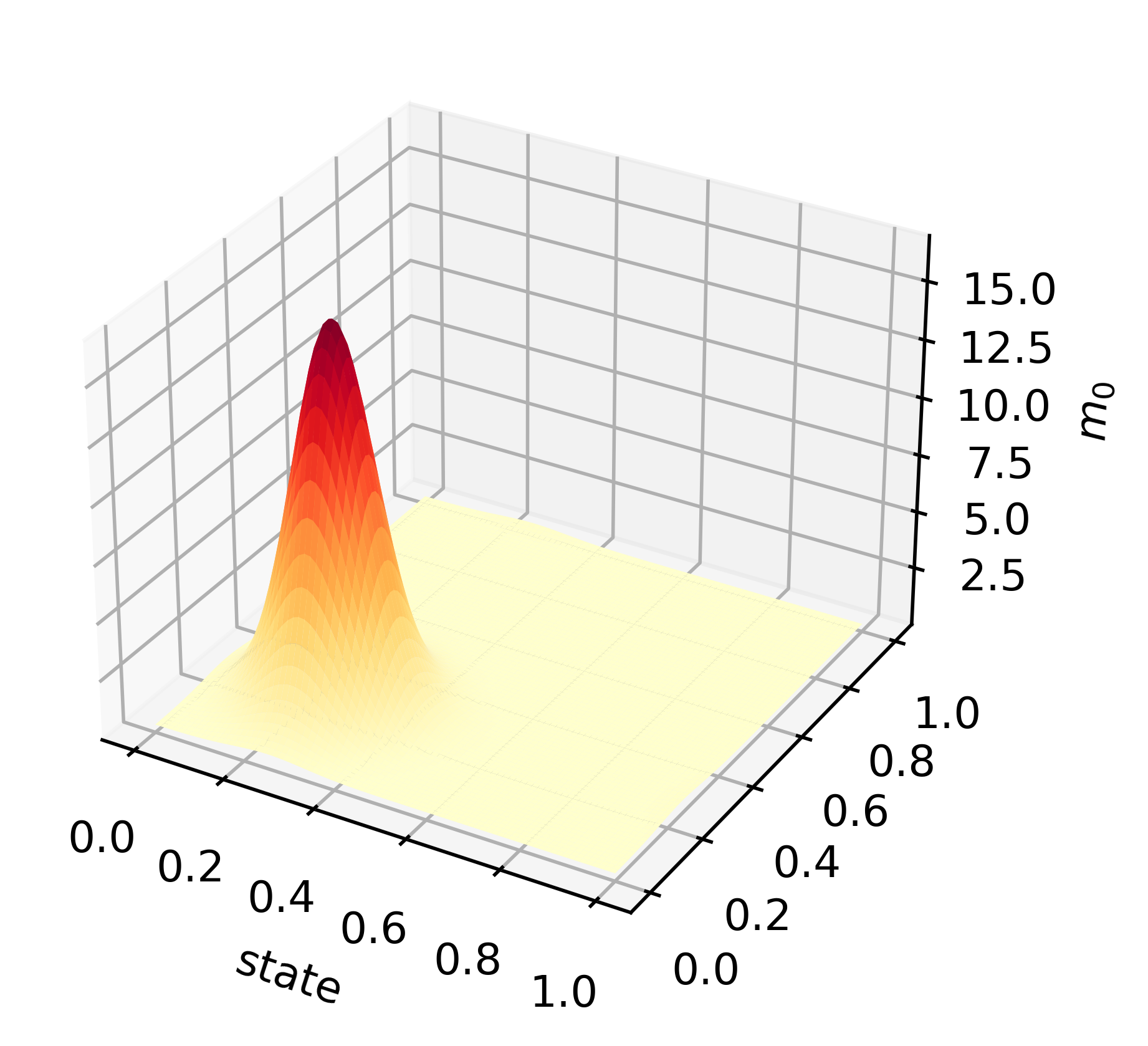} 
\caption{Initial measure $m_0$.}
\label{fig:m_0}
\end{subfigure}
\begin{subfigure}{0.45\textwidth}
\includegraphics[width=0.99\linewidth]{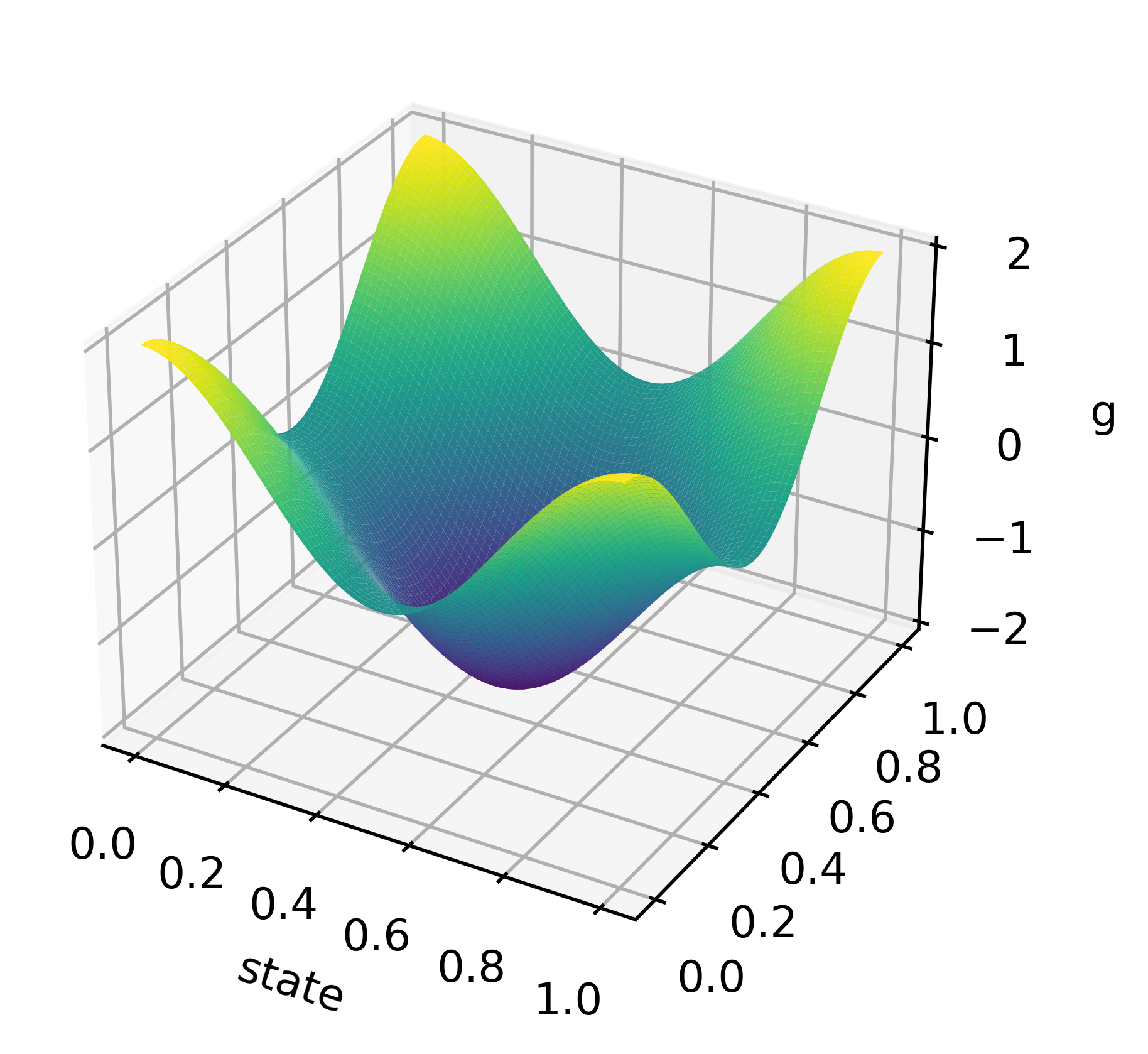}
\caption{Terminal condition $g$.}
\label{fig:g}
\end{subfigure}
\end{minipage}
\end{figure}

We define $L(x,t,v)= \frac{1}{2} |v|^2$, $a(x,t)= \text{Id}$ and $\phi(t,z)= 10z$. Obviously $\phi$ derives from the potential $\Phi(t,z)= 5(z_1^2 + z_2^2)$. In other words, we have a two-dimensional price variable, and the two price relations write, for $i=1,2$ as follows: $P_i= 10 \int_{\mathbb{T}^d} v_i(x,t) m_i(x,t) \, \dd x \, \dd t$. For any control $\nu_t \in \mathbb{L}^2_{\mathbb{F}}(0,1)$, the cost function of a representative agent writes:
\begin{equation*}
\mathbb{E} \Big[\int_0^T  \frac{1}{2} |\nu_t |^2 + 10\,  \langle P(t) , \nu_t \rangle \, \dd t + g(X^{\nu}_T) \Big].
\end{equation*}

In this numerical experiment we consider a volatility equal to $0.1$ for the controlled stochastic state equation satisfied by $X^{\nu}$.
Algorithm \ref{algo:gcg} requires to compute the mappings $\bm{u}$ (i.e. a solution to the Hamilton-Jacobi-Bellman equation) and $\bm{M}$ (i.e. a solution to the Fokker-Planck equation) at each step. The resolution is done via an explicit finite difference scheme. 
In the following, we discretize $Q$ with a uniform grid containing $10^2$ points in space and $42$ points in time.

\paragraph{Interpretation and numerical solution}

 \begin{figure}
 \centering
 \begin{minipage}[c]{0.9\linewidth}
 \begin{subfigure}{0.99\textwidth}
 \centering
    \includegraphics[width=0.8\linewidth]{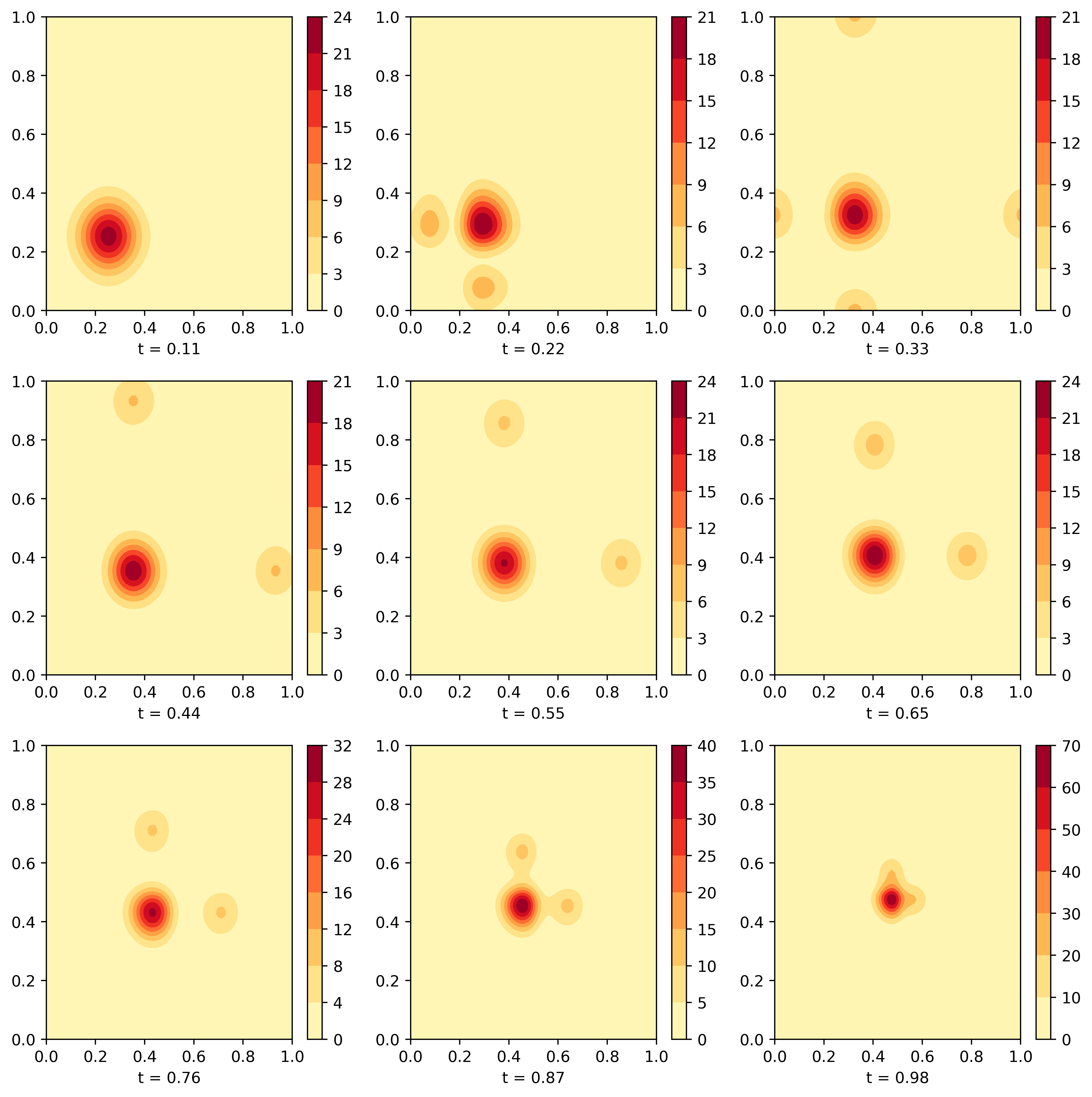}
    \caption{Density of players $\bar{m}$ evaluated at several time steps.}
    \label{grap:m-bar}
 \end{subfigure}
\\[1em] 
 \begin{subfigure}{0.99\textwidth}
 \centering
 \includegraphics[width=0.8\linewidth]{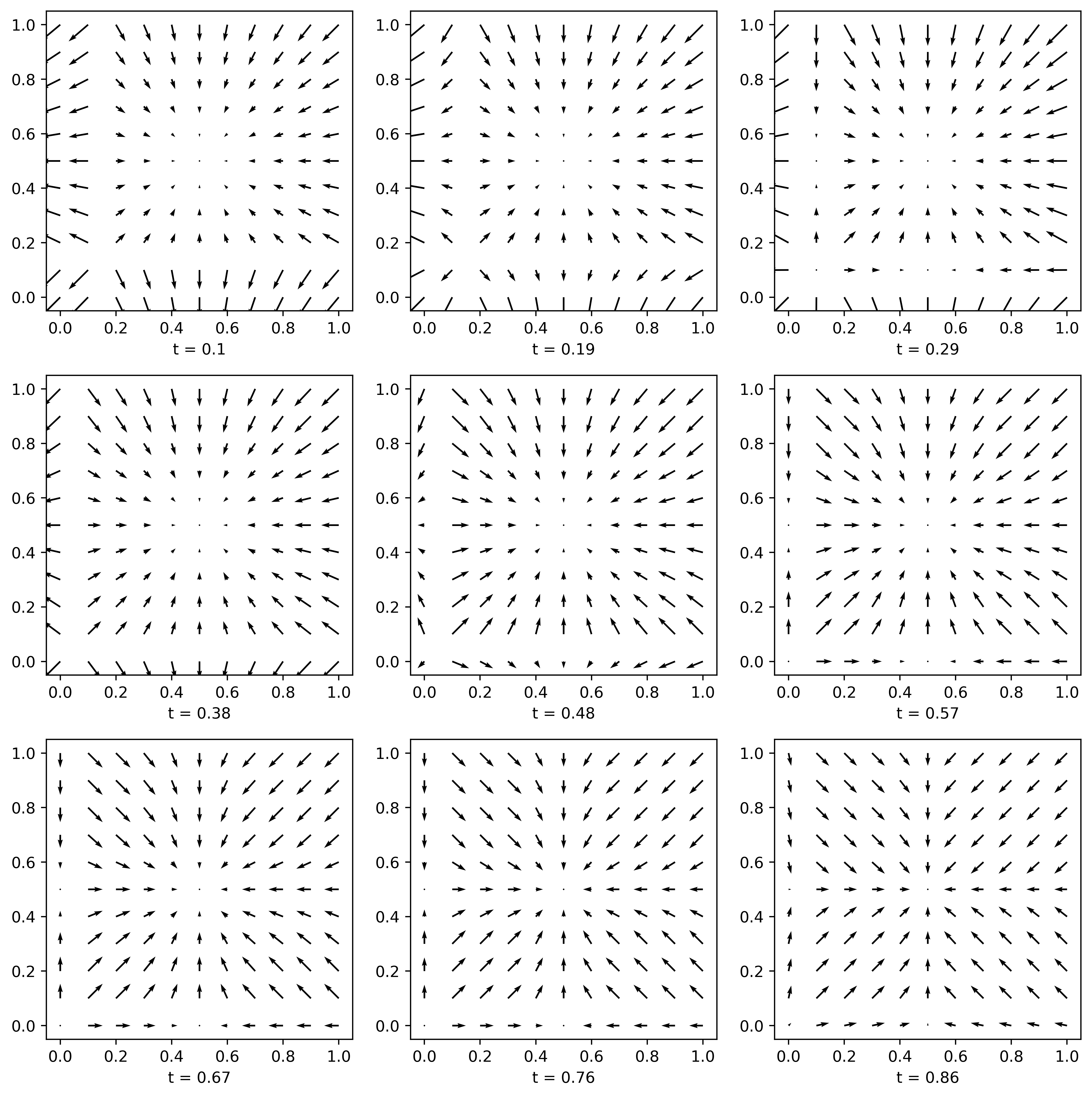}
\caption{Vector field $\bar{v}$ evaluated at several time steps.}
\label{grap:w-bar}
 \end{subfigure}
 \end{minipage}
 \caption{Density and velocity vector field evaluated at several points in time.}
\end{figure}

When $P=0$, the optimal trajectories of the agents look like slightly perturbed straight lines (with constant speed), ending in a close neighborhood of the point $(0.5,0.5)$ (it would be a straight line in the deterministic case without diffusion coefficient). In view of the initial condition, located in the ``bottom right corner" of the square $[0,1]^2$, the agents use in this case positive controls $v_1$ and $v_2$. However, when $P_1$ is positive, some agents may try to reach the point $(0.5,0.5)$ using control with a first coordinate that is negative. Graphically speaking, these agent would cross the left vertical axis ($x_1=0$) and ``jump" to the right vertical axis $(x_1=1)$. This strategy is particularly interesting for agents an initial condition $x$ such that $x_1$ is positive and close to zero. Of course the same reasoning is valid for $P_2$ positive: some agents would cross the horizontal axis.

The two equilibrium prices are positive, leading to four different kinds of optimal trajectories: those which do not cross any axis, those crossing only the vertical axis, those crossing only the horizontal axis, and those crossing both axes, as can be seen from the graphs of the equilibrium vector field $\bar{v}$ on Figure \ref{grap:w-bar}. Thus the initial distribution is split into four groups as shown by Figure \ref{grap:m-bar}. The group of agents crossing both axes is actually of very small mass, thus not visible on the graph.

\paragraph{Convergence and execution time}

\begin{figure}
 \centering
 \begin{minipage}[c]{0.9\linewidth}
 \begin{subfigure}{0.45\textwidth}
 \includegraphics[width=0.99\linewidth]{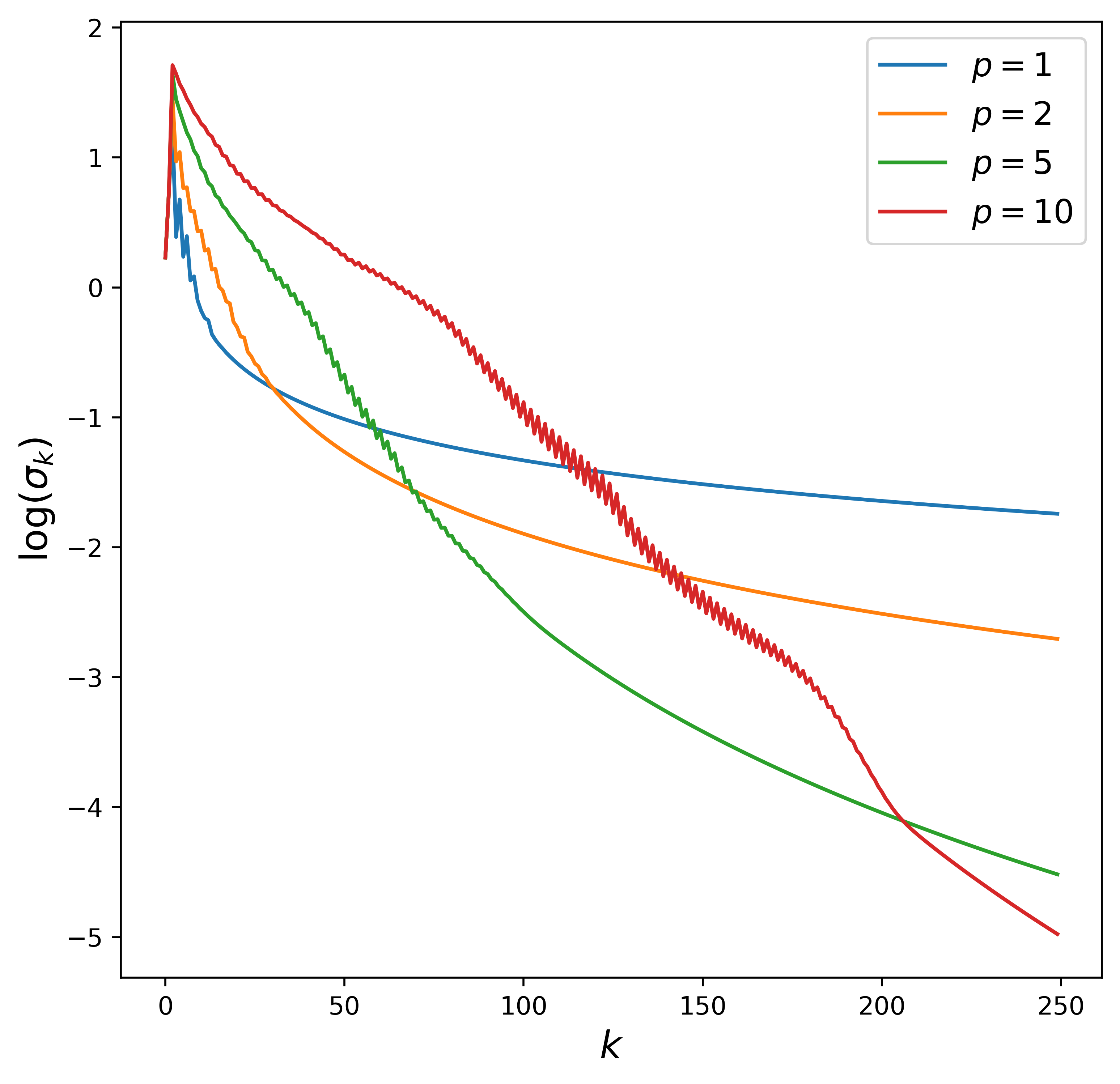}
 \caption{Logarithm of the exploitability for $\delta_k = p/(k+p)$.}
 \label{fig:p-exp}
 \end{subfigure}
 \hspace*{\fill}
 \begin{subfigure}{0.45\textwidth}
 \includegraphics[width=0.99\linewidth]{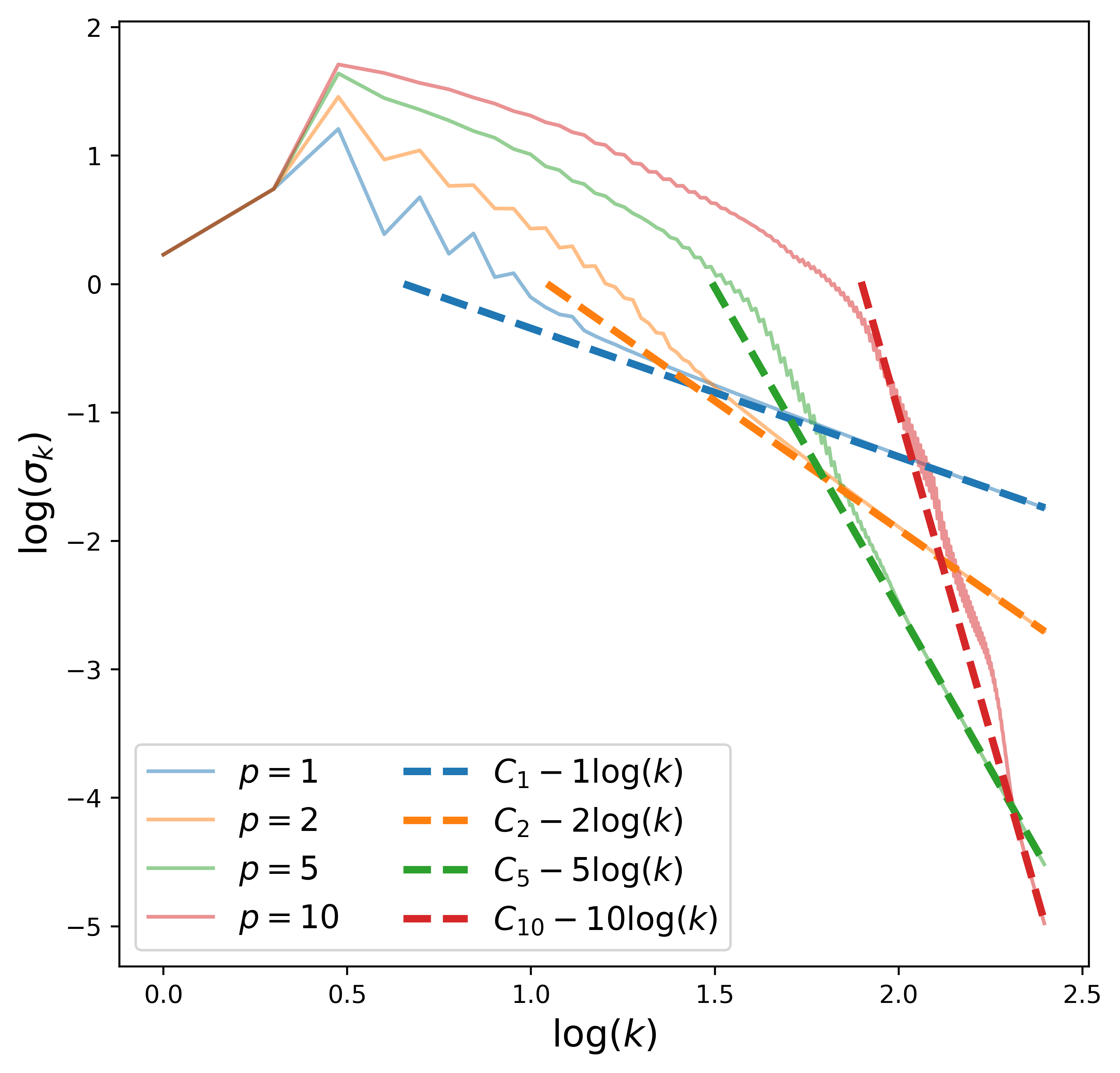}
 \caption{Log-log scale of the exploitablity for $\delta_k = p/(k+p)$.}
 \label{fig:log-log-p-exp}
 \end{subfigure}
 \\[1em]
 \begin{subfigure}{0.45\textwidth}
 \includegraphics[width=0.99\linewidth]{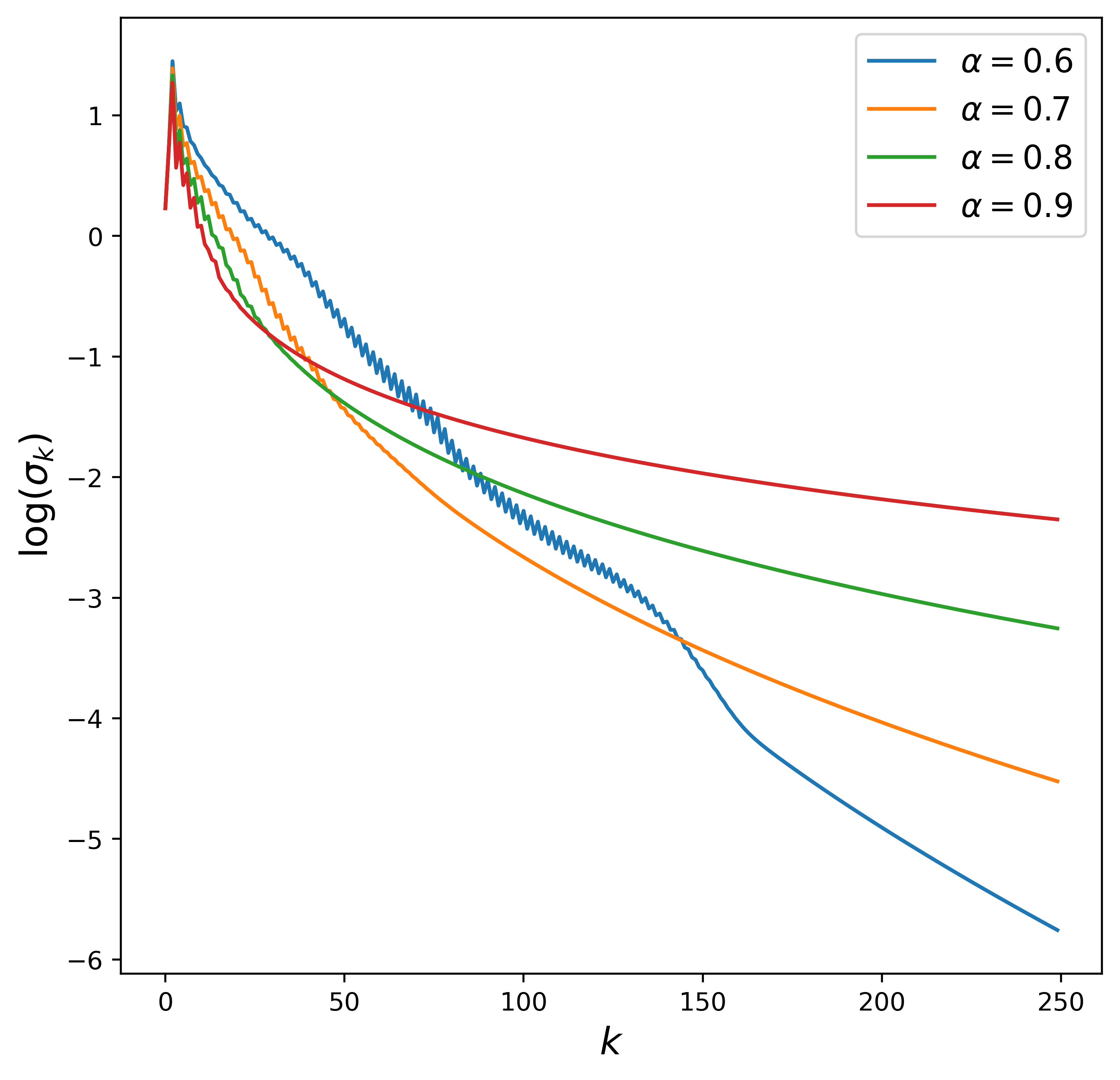}
 \caption{Logarithm of the exploitability for the prescribed stepsizes $\delta_k = (k+1)^{-\alpha}$.}
 \label{fig:alpha-exp}
 \end{subfigure}
 \hspace*{\fill}
 \\[1em]
 \begin{subfigure}{0.45\textwidth}
 \includegraphics[width=0.99\linewidth]{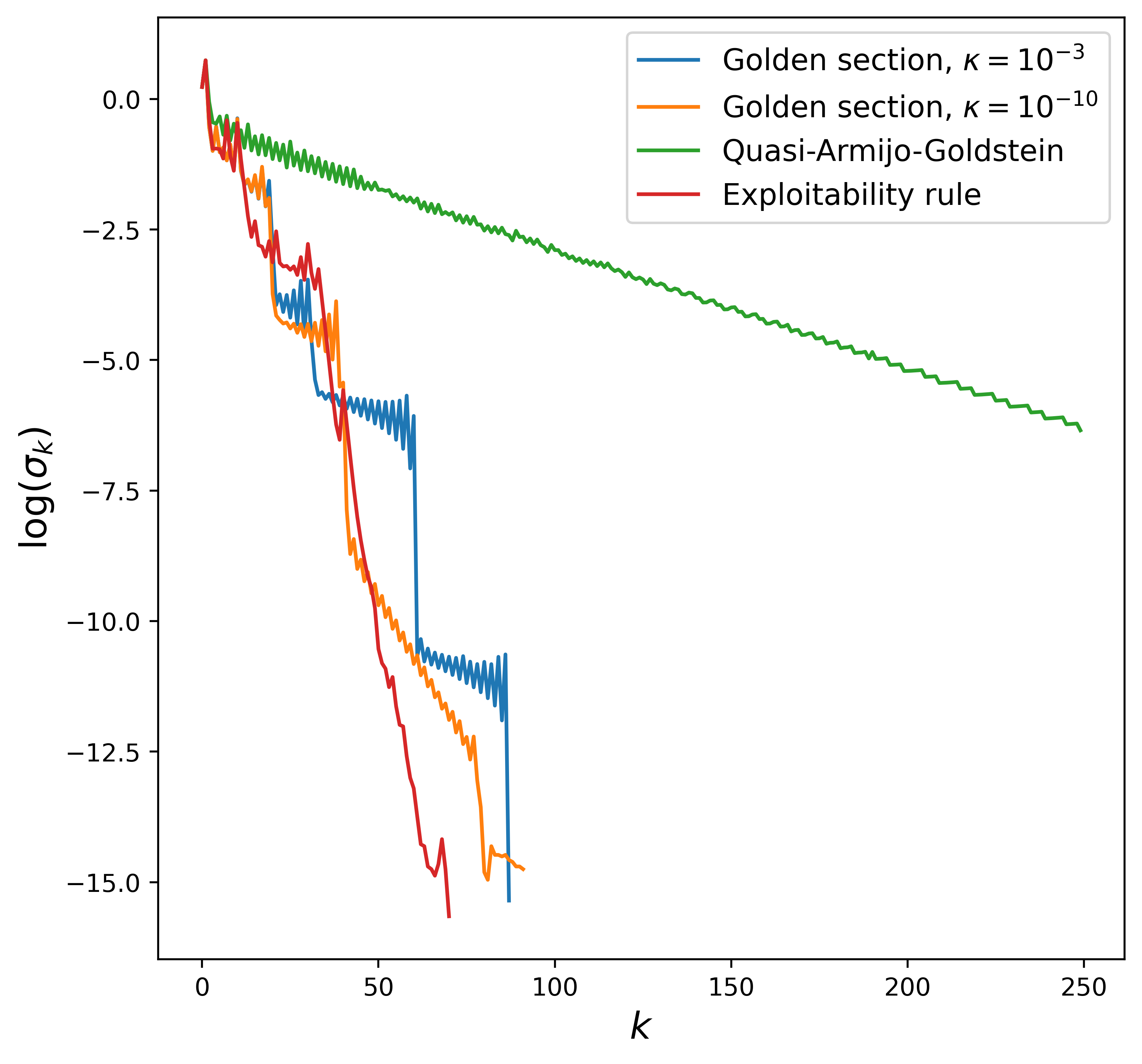}
 \caption{Logarithm of the exploitability for the adaptative stepsizes.}
 \label{fig:closed-loop-exp}
 \end{subfigure}
 \hspace*{\fill}
 \begin{subfigure}{0.45\textwidth}
 \includegraphics[width=0.99\linewidth]{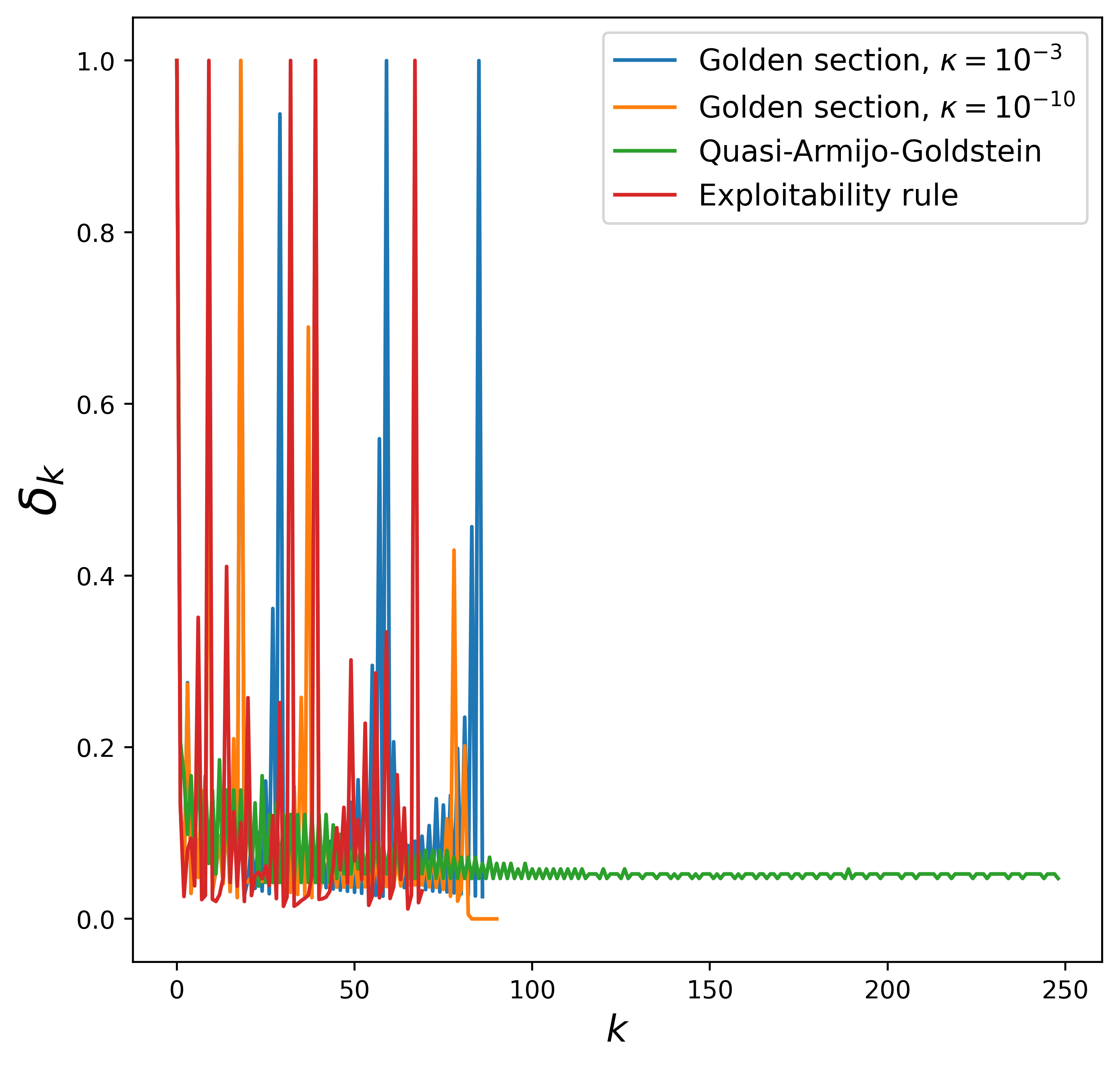}
 \caption{Golden section search, QAG and exploitability-based stepsizes.}
 \label{fig:closed-loop-delta}
 \end{subfigure}
 \end{minipage}
 \caption{Convergence results for prescribed and adaptative stepsizes.}
 \label{fig:convergence}
\end{figure}

The convergence results are reported on Figure \ref{fig:convergence}, for a maximal number $N=250$ of iterations. We use the exploitability as an indicator of convergence, since it can be evaluated explicitely at each iteration. We recall that $\varepsilon_k \leq \sigma_k \leq C \varepsilon_k$.
\begin{itemize}
    \item Prescribed stepsizes, with $\delta_k= p/(k+p)$ and $p \in \{ 1, 2 , 5, 10 \}$.
    Figures \ref{fig:p-exp} and \ref{fig:log-log-p-exp} show that the convergence of the exploitability with an empirical rate of convergence of order $k^{-p}$, in accordance with Lemma \ref{lemma:p_conv}. 
    The value of $p$ which yields the best convergence, for a fixed number of iterations $k$, increases slowly with respect to $k$, in accordance with Remark \ref{remark:cv_open}.
    \item Prescribed stepsizes, with $\delta_k= (k+1)^{-\alpha}$, with $\alpha \in \{ 0.6, 0.7, 0.8, 0.9 \}$. See Figure \ref{fig:alpha-exp}. Similar comments can be done: a better asymptotic rate of convergence is observed for smaller values of $\alpha$; the value of $\alpha$ which yields the best convergence for a fixed number of iterations $k$, decreases with respect to $k$. We have done some tests with smaller values of $\alpha$, which are not shown on the figure. For $N=250$ iterations, the performance is severely degraded for such values of $\alpha$ and convergence cannot be observed in a reasonable number of iterations.
    \item Adaptive stepsizes. See Figure \ref{fig:closed-loop-exp}.
    Optimal stepsizes are approximated with golden section search (see page \pageref{eq:stepsize_rule1bis}) with tolerances $\kappa = 10^{-3}$ and $\kappa = 10^{-10}$. The QAG condition is implemented with $c= 1/4$ and $\tau= 0.9$. The choice of $c$ and $\tau$ is here arbitrary since there is no general rule for the choice of these parameters (see \cite[Chapter 3]{nocedal1999numerical} for further discussions on the topic).
    The three methods all yield a linear convergence; the golden section and the exploitability-based methods are particularly fast. Note that in the case of optimal stepsizes, a very precise resolution of problem \eqref{eq:stepsize_rule1} with tolerance $10^{-10}$ does not improve the convergence of the method, in comparison with the tolerance $10^{-3}$.
\end{itemize}

Finally we compare the time required for the GCG algorithm to satisfy a precision criterion ($\sigma_k \leq 10^{-3}$ and  $\sigma_k \leq 10^{-4}$) for different stepsizes. For the considered example, the time needed to compute the adaptive stepsizes is not significantly longer than the time needed for the resolution of the HJB and the Fokker-Planck equation. Unsurprisingly, the adaptive stepsizes are more efficient than the tested prescribed stepsizes, for the two stopping criteria.

\begin{figure}[h!]
\centering
	\begin{tabular}{|c|c|c|c|}
		\hline
		\multicolumn{2}{|c|}{Learning method} & $\sigma_k \leq 10^{-3}$ & $\sigma_k \leq 10^{-4}$ \\ \hline
		\multicolumn{4}{|c|}{\textbf{Prescribed}} \\ \hline
		\multirow{3}{*}{$\delta_k = \frac{p}{k+p}$} &
		$p = 1$ &  $345.82$ & $3379.41$  \\ \cline{2-4}
		& $p = 2$ & $27.39$  & $85.27$ \\ \cline{2-4}
		& $p = 5$ & $11.14$ & $17.0$ \\ \cline{2-4}
		& $p = 10$ & $13.93$ & $15.61$ \\ \hline
		\multirow{3}{*}{$\delta_k = (k+1)^{-\alpha}$} &
		$\alpha = 0.9$ & $42.46$ & $132.37$ \\ \cline{2-4}
		& $\alpha = 0.8$ & $15.62$ & $32.3$  \\ \cline{2-4}
		& $\alpha = 0.7$ & $9.46$ & $14.95$  \\ \cline{2-4}
		& $\alpha = 0.6$ & $10.3$  & $12.4$  \\ \hline
		\multicolumn{4}{|c|}{\textbf{Adaptative}} \\ \hline
		\multicolumn{2}{|c|}{Quasi-Armijo-Goldstein} &  $9.1$ & $13.11$   \\	\hline
		\multicolumn{2}{|c|}{Golden-section $\kappa = 10^{-3}$} & $2.0$ & $2.2$ \\ \hline
		\multicolumn{2}{|c|}{Golden-section $\kappa = 10^{-10}$} & $2.54$ & $2.68$  \\ \hline
		\multicolumn{2}{|c|}{Exploitability-based} & $3.26$ & $3.91$  \\
		\hline
	\end{tabular}
\caption{Execution time in seconds of the generalized conditional gradient}
\end{figure}

\section{Stability and convergence results} \label{sec:analysis}

This section is dedicated to the analysis of Algorithm \ref{algo:gcg} and is organized in four subsections.
We state in Subsection \ref{subsec:mappings_statement} some technical results concerning the well-posedness of the auxiliary mappings introduced in Subsection \ref{subsec:mappings}.
Subsection \ref{sec:stability} establishes a stability result (Proposition \ref{prop:stability_oc}), necessary to prove the linear speed of convergence. Subsection \ref{sec:convergence} adresses the well-posedness of Algorithm \ref{algo:gcg} and the proof of Theorem \ref{theo:variables}.
Subsection \ref{subsec:proof_main_thm} is dedicated to the proof of Theorem \ref{theo:main}.

\subsection{Well-posedness of the auxiliary mappings}
\label{subsec:mappings_statement}

We provide here technical results, whose proofs can be found in the Appendix. We recall that the sets $\Theta$, $\coupling$, and $\ballR$ have been introduced in \eqref{eq:thetaBig}, \eqref{eq:ball}, and \eqref{eq:ballR}.

\begin{lemma} \label{lemma:fp}
Let $R>0$. Let $v \in \feedback{}$ be such that $\| v \|_{\feedback{}} \leq R$ Then $\fpmap[v]$ is uniquely defined and lies in $W^{2,1,q}(Q)$. Moreover, there exists a constant $C(R)>0$, independent of $v$, such that $\| \fpmap[v] \|_{W^{2,1,q}(Q)} \leq C(R)$. Finally, $\fpmap[v](x,t) \geq 0$, for all $(x,t) \in Q$.
\end{lemma}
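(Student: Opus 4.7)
The plan is to treat the Fokker--Planck equation as a linear parabolic equation with coefficients that are quantitatively controlled by $R$, and then apply $L^q$-maximal regularity together with a weak maximum principle. Expanding the divergence in \eqref{eq:fokker-planck}, the equation reads
\begin{equation*}
\partial_t m - \Delta m + v \cdot \nabla m + (\div v)\, m = 0,
\end{equation*}
with drift $v \in L^\infty(Q;\R^d)$ satisfying $\|v\|_{L^\infty} \leq R$, and potential $\div v = \mathrm{tr}(D_x v) \in L^q(Q)$ satisfying $\|\div v\|_{L^q(Q)} \leq R$. The initial datum $m_0 \in \mathcal{C}^{2+\alpha_0}(\mathbb{T}^d)$ embeds into $W^{2,q}(\mathbb{T}^d)$, and the assumption $q > d+2$ ensures, through Lemma \ref{lemma:max_reg_embedding}, that $W^{2,1,q}(Q) \hookrightarrow \mathcal{C}^\delta(Q)$ with controlled gradient; in particular, every $W^{2,1,q}$-solution is automatically bounded and continuous.

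For existence, uniqueness, and the $W^{2,1,q}$ bound I would set up a Picard iteration based on the $L^q$-maximal regularity of $\partial_t - \Delta$ on the torus. Given $\tilde m \in W^{2,1,q}(Q)$, let $\Psi(\tilde m)$ denote the unique $W^{2,1,q}$-solution of
\begin{equation*}
\partial_t m - \Delta m = - v \cdot \nabla \tilde m - (\div v)\, \tilde m, \qquad m(\cdot,0) = m_0.
\end{equation*}
Using $\|v\|_{L^\infty} \leq R$, H\"older's inequality, and the embedding $W^{2,1,q}(Q) \hookrightarrow L^\infty(Q)$ from Lemma \ref{lemma:max_reg_embedding}, the right-hand side is bounded in $L^q(Q)$ by $C(R)\, \|\tilde m\|_{W^{2,1,q}}$. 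Maximal regularity then yields an estimate of the form $\|\Psi(\tilde m_1) - \Psi(\tilde m_2)\|_{W^{2,1,q}(Q_\tau)} \leq C(R)\, \tau^\beta\, \|\tilde m_1 - \tilde m_2\|_{W^{2,1,q}(Q_\tau)}$ on every cylinder $Q_\tau = \mathbb{T}^d \times [0,\tau]$, for some $\beta > 0$, so that $\Psi$ is a contraction for $\tau$ small enough depending only on $R$. Splitting $[0,T]$ into a finite number of such sub-intervals (a number depending only on $R$) and iterating produces a unique solution $m = \fpmap[v]$ on all of $[0,T]$, together with the bound $\|m\|_{W^{2,1,q}(Q)} \leq C(R)$. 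I expect this contraction step to be the main technical point, since the drift coefficient sits only in $L^\infty$ and the small time factor $\tau^\beta$ must be extracted from a careful interpolation inequality rather than from continuity of the coefficients.

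Non-negativity will follow from the weak maximum principle. Once $m$ is known to lie in $W^{2,1,q}(Q)$, testing the equation against $-m^-$, where $m^- = \max(-m,0)$, and integrating by parts on $\mathbb{T}^d$ gives
\begin{equation*}
\frac{1}{2} \frac{\dd}{\dd t} \|m^-(\cdot,t)\|_{L^2(\mathbb{T}^d)}^2 + \|\nabla m^-(\cdot,t)\|_{L^2(\mathbb{T}^d)}^2 = - \frac{1}{2} \int_{\mathbb{T}^d} (\div v)(x,t)\, m^-(x,t)^2 \, \dd x.
\end{equation*}
Using the already established $L^\infty$-bound on $m$ (hence on $m^-$), H\"older's inequality, and the integrability $t \mapsto \|\div v(\cdot,t)\|_{L^q(\mathbb{T}^d)} \in L^q(0,T) \subset L^1(0,T)$, a Gr\"onwall argument starting from $m_0 \geq 0$, hence $\|m_0^-\|_{L^2} = 0$, forces $m^- \equiv 0$. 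The continuity of $m$ provided by Lemma \ref{lemma:max_reg_embedding} then upgrades this to the pointwise inequality $m \geq 0$ on $Q$, completing the proof.
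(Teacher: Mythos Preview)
Your rewriting of the Fokker--Planck equation as $\partial_t m - \Delta m + \langle v, \nabla m\rangle + (\div v)\, m = 0$ is exactly the paper's starting point. From there, however, the paper simply invokes Theorem~\ref{theo:max_reg1}, which is stated precisely for linear parabolic equations with $b\in L^q(Q;\R^d)$, $c\in L^q(Q)$: taking $b=v$, $c=\div v$, $h=0$, existence, uniqueness and the $W^{2,1,q}$ bound follow in one line. Your Picard iteration is essentially a re-proof of (a special case of) that theorem; it is legitimate but unnecessary here, and the step you flag as delicate is genuinely problematic: for the term $(\div v)\,\tilde m$ you only know $\div v\in L^q(Q)$, so $\|\div v\|_{L^q(Q_\tau)}$ carries no factor $\tau^\beta$ uniformly in $v$ with $\|v\|_{\feedback}\le R$. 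One can still get existence (choose $\tau$ depending on $v$ via absolute continuity of the integral), but the claim that the number of sub-intervals, and hence the final constant, depends only on $R$ does not follow from your scheme as written.

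For nonnegativity the paper simply cites \cite[Lemma 3]{BHP-schauder}; your energy argument is a reasonable substitute, but the Gr\"onwall step as you describe it does not close. Using only the $L^\infty$-bound on $m$ and H\"older you obtain at best
\[
\frac{\dd}{\dd t}\,\|m^-(\cdot,t)\|_{L^2}^2 \;\le\; C\,\|\div v(\cdot,t)\|_{L^q(\mathbb{T}^d)}\,\|m^-(\cdot,t)\|_{L^2},
\]
which is sublinear in $\|m^-\|_{L^2}^2$ and does \emph{not} force $m^-\equiv 0$ from zero initial data (the scalar ODE $\dot y = c(t)\sqrt{y}$, $y(0)=0$, admits nontrivial solutions). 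The cure is to retain the good term $\|\nabla m^-\|_{L^2}^2$ on the left-hand side of your displayed identity: estimate $\int|\div v|\,(m^-)^2 \le \|\div v\|_{L^q}\,\|m^-\|_{L^{2q'}}^2$, interpolate via Gagliardo--Nirenberg $\|m^-\|_{L^{2q'}}^2 \le C\|\nabla m^-\|_{L^2}^{2\theta}\|m^-\|_{L^2}^{2(1-\theta)}$ with $\theta=d/(2q)$, and apply Young's inequality to absorb $\|\nabla m^-\|_{L^2}^2$. This yields a genuine inequality $\frac{\dd}{\dd t}\|m^-\|_{L^2}^2 \le \tilde c(t)\,\|m^-\|_{L^2}^2$ with $\tilde c\in L^1(0,T)$ (the condition $q>d+2$ is exactly what makes the exponents match), after which Gr\"onwall and the continuity from Lemma~\ref{lemma:max_reg_embedding} give $m\ge 0$ as you intended.
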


\begin{lemma} \label{lemma:fp2}
Let $R>0$. Let $v_1$ and $v_2 \in \feedback{}$. Let $m_i= \fpmap[v_i] \in W^{2,1,q}(Q)$ for $i \in \{1,2 \}$. Assume that $\| v_1 \|_{\feedback{}} \leq R$ and $\| m_2 \|_{L^\infty(Q)} \leq R$. Then, there exists a constant $C(R)$, independent of $v_1$ and $v_2$, such that
\begin{equation*}
\| m_2 - m_1 \|_{L^\infty(0,T;L^2(\mathbb{T}^d))}
\leq
C(R) \Big(
\int_Q |v_2-v_1|^2 m_2 \, \mathrm{d} x \, \mathrm{d} t
\Big)^{1/2}.
\end{equation*}
\end{lemma}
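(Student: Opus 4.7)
The plan is to derive a standard energy estimate for the difference $\delta m := m_2 - m_1$, exploiting that the $L^\infty$ bound on $m_2$ lets us "absorb" one power of $m_2$ into the weighted integral $\int |v_2-v_1|^2 m_2$.

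First, I would subtract the Fokker–Planck equations satisfied by $m_1$ and $m_2$, and rewrite the flux difference as $v_2 m_2 - v_1 m_1 = v_1 \delta m + (v_2-v_1) m_2$. This yields
\begin{equation*}
\partial_t \delta m - \Delta \delta m + \nabla \cdot (v_1 \delta m) = - \nabla \cdot \big( (v_2-v_1) m_2 \big),
\qquad \delta m(0,\cdot) = 0.
\end{equation*}
Since $m_i \in W^{2,1,q}(Q)$ and $v_i$ is bounded, $\delta m$ has enough regularity to be used as a test function in a standard energy estimate (this could alternatively be justified by a mollification argument, but the regularity from Lemma~\ref{lemma:fp} avoids this).

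Next, I multiply the equation by $\delta m$ and integrate over $\mathbb{T}^d$, using integration by parts on the two divergence terms. This gives, for a.e.\ $t \in [0,T]$,
\begin{equation*}
\tfrac{1}{2} \tfrac{d}{dt} \| \delta m(t) \|_{L^2}^2
+ \| \nabla \delta m(t) \|_{L^2}^2
= \int_{\mathbb{T}^d} v_1 \, \delta m \cdot \nabla \delta m \, \dd x
+ \int_{\mathbb{T}^d} (v_2-v_1) m_2 \cdot \nabla \delta m \, \dd x.
\end{equation*}
The first term on the right is bounded via Cauchy–Schwarz and Young's inequality by $\tfrac{1}{4} \| \nabla \delta m \|_{L^2}^2 + \| v_1 \|_{L^\infty}^2 \| \delta m \|_{L^2}^2$, where $\| v_1 \|_{L^\infty} \le R$ by assumption. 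For the second term, I use the crucial identity $|v_2-v_1|^2 m_2^2 \leq R |v_2-v_1|^2 m_2$ (since $m_2 \leq R$ and $m_2 \geq 0$ from Lemma~\ref{lemma:fp}), then Young's inequality:
\begin{equation*}
\Big| \int_{\mathbb{T}^d} (v_2-v_1) m_2 \cdot \nabla \delta m \, \dd x \Big|
\leq \tfrac{1}{4} \| \nabla \delta m \|_{L^2}^2
+ R \int_{\mathbb{T}^d} |v_2-v_1|^2 m_2 \, \dd x.
\end{equation*}

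Combining, absorbing the two $\tfrac{1}{4} \| \nabla \delta m \|_{L^2}^2$ contributions, and integrating in time using $\delta m(0,\cdot)=0$, I obtain
\begin{equation*}
\| \delta m(t) \|_{L^2}^2
\leq C(R) \int_0^t \| \delta m(s) \|_{L^2}^2 \, \dd s
+ 2R \int_Q |v_2-v_1|^2 m_2 \, \dd x \, \dd t.
\end{equation*}
Grönwall's lemma then yields $\sup_{t \in [0,T]} \| \delta m(t) \|_{L^2}^2 \leq C(R) \int_Q |v_2-v_1|^2 m_2 \, \dd x \, \dd t$, which is the claim after taking square roots.

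The only mildly delicate point is the rewriting $|v_2-v_1|^2 m_2^2 \leq R |v_2-v_1|^2 m_2$: this is precisely what forces the weighted norm on the right-hand side (rather than the plain $L^2$-norm of $v_2-v_1$), and it explains why the hypothesis is placed on $m_2$ rather than on $m_1$ or on $v_2$. Everything else is a routine parabolic energy estimate.
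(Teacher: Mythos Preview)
Your proof is correct. Both your argument and the paper's start from the same PDE for $\delta m = m_2-m_1$, namely $\partial_t \delta m - \Delta \delta m + \nabla\cdot(v_1\,\delta m) = -\nabla\cdot\big((v_2-v_1)m_2\big)$ with zero initial datum, and both finish with the same key pointwise bound $|v_2-v_1|^2 m_2^2 \le R\,|v_2-v_1|^2 m_2$ coming from $\|m_2\|_{L^\infty}\le R$. The difference lies in the middle step: the paper casts the equation in the Gelfand triple $V=W^{1,2}(\mathbb{T}^d)\hookrightarrow L^2(\mathbb{T}^d)\hookrightarrow V^*$, verifies that the operator $B(t)m = -\Delta m + \nabla\cdot(v_1 m)$ is uniformly semi-coercive, and invokes Lions' abstract maximal-regularity result to conclude $\|\delta m\|_{L^\infty(0,T;L^2)} \le C\|\nabla\cdot((v_2-v_1)m_2)\|_{L^2(0,T;V^*)} \le C\|(v_2-v_1)m_2\|_{L^2(Q)}$. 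You instead carry out the underlying energy computation by hand---multiply by $\delta m$, integrate by parts, absorb the gradient terms via Young, and close with Gr\"onwall. Your route is more elementary and entirely self-contained; the paper's route is terser but leans on an external reference for what is, in the end, the same estimate.
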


\begin{lemma} \label{lemma:hjb2}
Let $R>0$. There exists a constant $C(R)$ such that for all $(\gamma_1,P_1)$ and $(\gamma_2,P_2)$ in $\ballR$, the following holds:
\begin{equation*}
\| \umap[\gamma_2,P_2] - \umap[\gamma_1,P_1] \|_{L^\infty(Q)}
\leq
C(R) \Big( \| P_2 - P_1 \|_{L^2(0,T;\R^k)}
+ \| \gamma_2 - \gamma_1 \|_{L^\infty(Q)} \Big).
\end{equation*}
\end{lemma}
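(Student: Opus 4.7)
The proof naturally proceeds via the stochastic optimal control representation \eqref{mapping:u}--\eqref{eq:cost_oc}. Denote $J_i = J[\gamma_i,P_i]$ for $i=1,2$. The crucial observation is that, for \emph{any} admissible control $\nu \in \mathbb{L}_{\mathbb{F}}^2(t,T)$ and any $(x,t) \in Q$, since the state dynamics $(X_s)$ do not depend on $(\gamma_i,P_i)$, we have
\begin{equation*}
J_2(x,t,\nu) - J_1(x,t,\nu) = \mathbb{E}\int_t^T \Big((\gamma_2-\gamma_1)(X_s,s) + \langle a^\star(X_s,s)(P_2-P_1)(s),\nu_s\rangle\Big) \dd s.
\end{equation*}
Using the boundedness of $a$ and Cauchy-Schwarz (exploiting that $P_2-P_1$ is deterministic so it can be pulled out of the expectation, followed by Jensen's inequality $\mathbb{E}[|\nu_s|]^2 \leq \mathbb{E}[|\nu_s|^2]$), this quantity is controlled by
\begin{equation*}
T \|\gamma_2-\gamma_1\|_{L^\infty(Q)} + \|a\|_{L^\infty} \|P_2-P_1\|_{L^2(0,T;\R^k)} \Big(\mathbb{E}\int_t^T |\nu_s|^2 \dd s\Big)^{1/2}.
\end{equation*}

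The key ingredient is therefore a uniform $L^2$ bound on near-optimal controls: for every $(x,t) \in Q$, every $(\gamma,P) \in \ballR$, and every $\varepsilon \in (0,1]$, I claim one can find an $\varepsilon$-optimal $\nu^\varepsilon$ for $J[\gamma,P](x,t,\cdot)$ satisfying $\mathbb{E}\int_t^T |\nu_s^\varepsilon|^2 \dd s \leq C(R)$. To establish this, I will exploit the quadratic coercivity of $L$ inherited from \hypA{}, \hypC{}, and \hypD{}: strong convexity combined with the quadratic upper bound and continuity (yielding a minimizer $v^\star(x,t)$ of $L(x,t,\cdot)$ bounded uniformly on the compact set $Q$) produces a lower estimate of the form $L(x,t,v) \geq \tfrac{1}{4C_0}|v|^2 - C$. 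Inserting this into $J[\gamma,P](x,t,\nu)$ and absorbing the cross term $\mathbb{E}\int_t^T \langle A^\star P(X_s,s),\nu_s\rangle \dd s$ via Young's inequality (which is legitimate since $\|P\|_{L^\infty} \leq R$ and $\|a\|_\infty$ is bounded) yields
\begin{equation*}
J[\gamma,P](x,t,\nu) \geq c\, \mathbb{E}\int_t^T |\nu_s|^2 \dd s - C(R), \qquad c > 0.
\end{equation*}
The matching upper bound $\umap[\gamma,P](x,t) \leq J[\gamma,P](x,t,0) \leq C(R)$ follows from the boundedness of $L(\cdot,0)$, $\gamma$, and $g$, and thus forces the desired $L^2$ bound on any $\nu^\varepsilon$.

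To conclude, I take $\nu^\varepsilon$ as such an $\varepsilon$-optimal control for $J_1(x,t,\cdot)$ and use it as a competitor for the problem governed by $(\gamma_2,P_2)$:
\begin{equation*}
\umap[\gamma_2,P_2](x,t) - \umap[\gamma_1,P_1](x,t) \leq J_2(x,t,\nu^\varepsilon) - J_1(x,t,\nu^\varepsilon) + \varepsilon.
\end{equation*}
Plugging in the estimates of the first two paragraphs and sending $\varepsilon \to 0$, then swapping the roles of indices $1$ and $2$, yields a pointwise bound in $(x,t)$; taking the supremum over $Q$ delivers the $L^\infty$ estimate claimed in the lemma. The main technical obstacle is the uniform $L^2$ bound on near-optimal controls; once it is in place, the remainder is an elementary comparison of cost functionals.
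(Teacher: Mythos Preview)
Your proof is correct and follows essentially the same route as the paper's: the paper first isolates the uniform $L^2$ bound on $\varepsilon$-optimal controls as a separate lemma (Lemma~\ref{lem:u-L-infty}), then bounds $|u_2-u_1|$ by the supremum of $|J_2(\cdot,\nu)-J_1(\cdot,\nu)|$ over this bounded control set and concludes via Cauchy--Schwarz exactly as you do. Your derivation of the quadratic lower bound on $L$ via the minimizer $v^\star$ is a minor variant of the paper's argument, which instead uses directly that $L(x,t,0)$ and $L_v(x,t,0)$ are bounded (from \hypD{}) together with strong convexity.
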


\begin{proposition} \label{prop:hjb1}
The map $\umap$ is well-defined from $\coupling$ to $W^{2,1,q}(Q)$.
Moreover, for any $R>0$, there exists a constant $C(R)>0$ such that for any $(\gamma,P) \in \ballR$,
$
\| \umap[\gamma,P] \|_{W^{2,1,q}(Q)}
\leq C(R).
$
\end{proposition}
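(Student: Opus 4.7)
The strategy is to work in three stages: first derive an $L^\infty$ bound on $u = \umap[\gamma,P]$, then upgrade to a Lipschitz bound in $x$ that is uniform over $\ballR$, and finally rewrite the HJB equation as a heat equation with bounded right-hand side and invoke $L^q$ maximal parabolic regularity. The existence and uniqueness of the viscosity solution are classical under our assumptions (the Hamiltonian $H$ is continuous, convex, and locally Lipschitz in $p$ by (H1)--(H4), $\gamma$ and $A^\star P$ are bounded, and $g\in\mathcal{C}^{2+\alpha_0}(\mathbb{T}^d)$), so I will concentrate on the quantitative estimates that yield the $W^{2,1,q}$ bound.

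\textbf{Stage 1 ($L^\infty$ bound).} I exploit the stochastic control representation \eqref{mapping:u}. For the upper bound, plug in the trivial control $\nu\equiv 0$: by (H3) and (H6), $L(X_s,0)$, $\gamma$, and $g$ are all bounded by some $C(R)$, so $u(x,t)\le C(R)$. For the lower bound, use strong convexity (H1) in the form $L(x,s,\nu_s)\ge L(x,s,0)+D_vL(x,s,0)\cdot\nu_s+\tfrac{1}{2C_0}|\nu_s|^2$, then absorb the cross terms $D_vL(x,s,0)\cdot\nu_s$ and $\langle A^\star P(X_s,s),\nu_s\rangle$ into the quadratic term using Young's inequality (this is where the bound $\|P\|_\infty\le R$ and boundedness of $a$ are needed). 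The remaining integrand is bounded below by a constant $-C(R)$, hence $u(x,t)\ge -C(R)$.

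\textbf{Stage 2 (Lipschitz-in-$x$ bound).} Fix $t\in[0,T]$ and $x,y\in\mathbb{T}^d$. Let $\nu$ be a near-optimal control for the initial datum $(x,t)$ in \eqref{mapping:u}; I couple the two trajectories $X^x$ and $X^y$ using the same Brownian motion and the same control $\nu$, so $X_s^x-X_s^y = x-y$ for all $s\ge t$. Comparing the two costs and using the Lipschitz estimates (H2) for $L$, (H5) for $\gamma$ and $\phi$, the $\mathcal{C}^{2+\alpha_0}$ regularity of $g$, and the $\mathcal{C}^1$-regularity of $a$ from (H4), yields
\begin{equation*}
u(x,t)-u(y,t)\le C|x-y|\,\mathbb{E}\!\left[1+\int_t^T(1+|\nu_s|^2)\,\dd s\right].
\end{equation*}
The quadratic moment $\mathbb{E}\int|\nu_s|^2\dd s$ is controlled by the optimal value, which, by Stage 1 and the strong-convexity argument used there, is itself bounded by $C(R)$. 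Symmetry in $(x,y)$ gives $\|\nabla u\|_{L^\infty(Q;\R^d)}\le C(R)$.

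\textbf{Stage 3 (Maximal regularity).} Once $\nabla u\in L^\infty$ with $\|\nabla u\|_\infty\le C(R)$, the quantity $F\coloneqq \gamma-\bm{H}[\nabla u+A^\star P]$ belongs to $L^\infty(Q)\subset L^q(Q)$ with $\|F\|_{L^q(Q)}\le C(R)$, since $H$ is continuous and bounded on bounded sets (from (H1), (H3), (H4)) and $\gamma$, $P$, $a$ are all bounded by $R$. The HJB equation then reads as the linear backward heat equation $-\partial_t u-\Delta u=F$ with terminal condition $g\in W^{2,q}(\mathbb{T}^d)$. Standard $L^q$ maximal parabolic regularity on the torus (see e.g.\ Ladyzenskaja--Solonnikov--Ural'ceva) yields $u\in W^{2,1,q}(Q)$ with $\|u\|_{W^{2,1,q}(Q)}\le C\bigl(\|F\|_{L^q(Q)}+\|g\|_{W^{2,q}(\mathbb{T}^d)}\bigr)\le C(R)$; uniqueness within $W^{2,1,q}(Q)$ then follows from the equation being linear at this stage, and agrees with the viscosity solution by a standard verification argument (the $W^{2,1,q}$-solution is $\mathcal{C}^{1}$ in $x$ by Lemma~\ref{lemma:max_reg_embedding} and therefore a classical sub/supersolution).

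\textbf{Main obstacle.} The only non-routine step is Stage 2: the Lipschitz-in-$x$ estimate must be uniform over $\ballR$, with a constant depending only on $R$ and the data. The delicate point is controlling the quadratic moment of the near-optimal control $\nu$, for which one must bootstrap on the $L^\infty$ bound obtained in Stage 1. All other ingredients reduce to the coupling-of-trajectories argument and direct invocation of the structural assumptions (H1)--(H5).
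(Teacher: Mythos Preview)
Your Stages~1 and~2 are correct and mirror the estimates the paper also relies on (Lemma~\ref{lem:u-L-infty} and the Lipschitz bound quoted from \cite[Proposition~1, Step~2]{BHP-schauder}). The gap is in Stage~3. After Stage~2 you know that the \emph{viscosity} solution $u$ is Lipschitz in $x$, so $\nabla u$ exists a.e.\ and $F\coloneqq\gamma-\bm{H}[\nabla u+A^\star P]$ is a well-defined $L^\infty$ function. But to invoke $L^q$ maximal regularity you need $u$ to solve $-\partial_t u-\Delta u=F$ in a distributional (or strong) sense, and this does not follow from the viscosity formulation alone. Your verification argument is circular: the $W^{2,1,q}$ solution $\tilde u$ of the linear equation $-\partial_t\tilde u-\Delta\tilde u=F$ satisfies $-\partial_t\tilde u-\Delta\tilde u+\bm{H}[\nabla u+A^\star P]=\gamma$, not the HJB equation with $\nabla\tilde u$ inside the Hamiltonian, so comparison with the viscosity solution $u$ does not identify $\tilde u$ with $u$ unless you already know $\nabla\tilde u=\nabla u$.

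The paper closes this loop by a density argument. It first treats H\"older data $(\gamma,P)\in\Xi_{R+1}\cap\mathcal{C}^\beta(Q)\times\mathcal{C}^\beta(0,T;\R^k)$ (Proposition~\ref{prop:hjb_hol}), where a Leray--Schauder fixed-point produces a \emph{classical} solution that, by uniqueness of viscosity solutions, coincides with $\umap[\gamma,P]$; the a~priori bound on fixed points uses precisely your Stages~1--2, and maximal regularity then applies legitimately because the solution is already classical. For general $(\gamma,P)\in\ballR$ one approximates by H\"older data, uses the stability estimate of Lemma~\ref{lemma:hjb2} to get $u^n\to u$ in $L^\infty$, and passes to the limit in the uniform $W^{2,1,q}$ bound via Banach--Alaoglu and weak lower semicontinuity of the $L^q$ norm. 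Your outline can be completed along these lines, but without the regularization/fixed-point step (or an explicit appeal to $W^{2,1,p}$ regularity theory for viscosity solutions of uniformly parabolic equations), Stage~3 does not stand on its own.
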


\begin{lemma} \label{lemma:other}
The maps $\vmap$, $\mmap$, and $\wmap$ are well-defined from $\coupling$ to
$\feedback{}$,
$W^{2,1,q}(Q)$, and
$\feedback{}$,
respectively. Moreover, for any $R>0$, there exists $C(R)>0$ such that for any $(\gamma,P) \in \ballR$, it holds:
\begin{equation*}
\| \vmap[\gamma,P] \|_{\feedback{}} +
\| \mmap[\gamma,P] \|_{W^{2,1,q}(Q)}
+
\| \wmap[\gamma,P] \|_{\feedback{}} \leq C(R).
\end{equation*}
\end{lemma}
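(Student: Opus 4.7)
The plan is to chain the three mappings together in the order $\umap \to \vmap \to \mmap \to \wmap$, using the already-established estimates on $\umap$ (Proposition \ref{prop:hjb1}) and $\fpmap$ (Lemma \ref{lemma:fp}), plus basic chain-rule computations controlled by the assumptions \hypA{}--\hypD{} on $L$ and $a$.

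First I would handle $\vmap[\gamma,P] = -\bm{H}_p[\nabla \umap[\gamma,P] + A^\star P]$. Fix $R>0$ and $(\gamma,P) \in \ballR$. By Proposition \ref{prop:hjb1}, $\|\umap[\gamma,P]\|_{W^{2,1,q}(Q)} \leq C(R)$, and Lemma \ref{lemma:max_reg_embedding} then gives $\|\nabla \umap[\gamma,P]\|_{\mathcal{C}^\delta(Q;\R^d)} \leq C(R)$. Since $a$ is bounded (assumed in the definition of $A$) and $\|P\|_{L^\infty} \leq R$, the argument $\nabla \umap[\gamma,P] + A^\star P$ is bounded in $L^\infty(Q;\R^d)$ by some $C(R)$. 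By \hypD{}, $H_p$ is continuous in $(x,t,p)$ and Hölder on bounded sets, which yields the continuity of $\vmap[\gamma,P]$ on $Q$ and the bound $\|\vmap[\gamma,P]\|_{L^\infty} \leq C(R)$.

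The technical point is showing that $D_x \vmap[\gamma,P] \in L^q(Q;\R^{d\times d})$. Applying the chain rule (which is valid in the weak sense since all ingredients are $W^{1,q}$) gives
\begin{equation*}
D_x \vmap[\gamma,P] = -\bm{H}_{px}[\nabla u + A^\star P] - \bm{H}_{pp}[\nabla u + A^\star P]\bigl(D^2_{xx} u + D_x(A^\star P)\bigr).
\end{equation*}
By \hypA{}, $H_{pp} = (L_{vv})^{-1}$ is uniformly bounded by $C_0$; by \hypD{}, $H_{px}$ is bounded on bounded sets. Since $D_x a$ is bounded on $Q$ (\hypD{}), $D_x(A^\star P) \in L^\infty$ with norm $\leq C(R)$. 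Finally, $D^2_{xx} u \in L^q(Q;\R^{d\times d})$ with norm $\leq C(R)$ from Proposition \ref{prop:hjb1}. Putting everything together yields $\|D_x \vmap[\gamma,P]\|_{L^q} \leq C(R)$, hence $\vmap[\gamma,P] \in \feedback$ with $\|\vmap[\gamma,P]\|_\feedback \leq C(R)$.

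The estimates on $\mmap$ and $\wmap$ are then short. Applying Lemma \ref{lemma:fp} to $v = \vmap[\gamma,P]$ (whose $\feedback$-norm is now controlled by $C(R)$) gives $\mmap[\gamma,P] \in W^{2,1,q}(Q)$ with $\|\mmap[\gamma,P]\|_{W^{2,1,q}(Q)} \leq C(R)$. For $\wmap[\gamma,P] = \mmap[\gamma,P]\,\vmap[\gamma,P]$, the embedding of Lemma \ref{lemma:max_reg_embedding} gives $\mmap[\gamma,P] \in \mathcal{C}^\delta(Q)$ with $D_x \mmap[\gamma,P] \in L^q$, both with $C(R)$ bounds, so the product is continuous on $Q$, bounded, and its weak space derivative $(D_x \mmap)\vmap + \mmap(D_x \vmap)$ is in $L^q$ by Hölder's inequality (since $\vmap$ and $\mmap$ are in $L^\infty$ and $D_x \vmap$, $D_x \mmap$ are in $L^q$). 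This proves $\wmap[\gamma,P] \in \feedback$ with $\|\wmap[\gamma,P]\|_\feedback \leq C(R)$, completing the proof. The main obstacle, as indicated, is the $L^q$ estimate on $D_x \vmap$, which is the only place that really uses the strong convexity of $L$ (through the boundedness of $H_{pp}$) together with the maximal $L^q$-regularity of $\umap$.
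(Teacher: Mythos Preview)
Your proof is correct and follows essentially the same route as the paper: both arguments chain $\umap \to \vmap \to \mmap \to \wmap$, use Proposition \ref{prop:hjb1} and Lemma \ref{lemma:max_reg_embedding} to control $\nabla u$ and $D^2_{xx}u$, compute $D_x v$ via the same chain-rule formula, invoke Lemma \ref{lemma:fp} for $\mmap$, and finish with the product rule for $\wmap = \mmap\,\vmap$. Your exposition is slightly more explicit in justifying the boundedness of $H_{pp}$ through \hypA{} and in writing out the Leibniz rule for $D_x(\mmap\,\vmap)$, but the argument is the same.
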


\begin{lemma} \label{lemma:coupling}
The mappings $\gammamap$ and $\Pmap$ are well-defined. There exists a constant $C>0$ such that for all $m \in W^{2,1,q}(Q)$ and for all $w \in \feedback{}$,
\begin{equation*}
 \|\gamma\|_{W^{1,0,\infty}(Q)} + \| P \|_{L^\infty(0,T;\mathbb{R}^k)} \leq C,
\end{equation*}
where $\gamma= \gammamap[m]$ and $P= \Pmap[w]$.
There exists a constant $C>0$ such that for all $m_1$ and $m_2$ in $W^{2,1,q}(Q)$ and for all $w_1$ and $w_2$ in $\feedback{}$,
\begin{align*}
\| \gammamap[m_2] - \gammamap[m_1] \|_{L^\infty(Q)}
\leq {} &
C \| m_2 - m_1\|_{L^\infty(0,T;L^2(\mathbb{T}^d))}, \\
\| \Pmap[w_2] - \Pmap[w_1] \|_{L^2(0,T;\R^k)} \leq {} &
C \| w_2 - w_1 \|_{L^2(Q;\R^d)}.
\end{align*}
\end{lemma}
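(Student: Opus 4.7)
The plan is to verify the three claims---uniform bound, Lipschitz estimate for $\gammamap$, and Lipschitz estimate for $\Pmap$---through direct pointwise estimates that rely only on the boundedness assumptions in \hypC{}, the Lipschitz properties in \hypE{}, and the boundedness of $a$ stated after the definition of $A$.

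For the uniform bound, I first deduce $\|\gammamap[m]\|_{L^\infty(Q)} \leq C_0$ and $\|\Pmap[w]\|_{L^\infty(0,T;\R^k)} \leq C_0$ directly from \hypC{}. For the $W^{1,0,\infty}(Q)$ part of $\gammamap[m]$, the spatial Lipschitz continuity embedded in \hypE{}, namely $|f(x_2,t,m)-f(x_1,t,m)| \leq C_0|x_2-x_1|$, shows that for each $t$ the map $x \mapsto f(x,t,m(\cdot,t))$ is $C_0$-Lipschitz on $\mathbb{T}^d$, hence lies in $W^{1,\infty}(\mathbb{T}^d)$ with norm bounded by $2C_0$. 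Joint continuity of $\gammamap[m]$ on $Q$ follows from the full \hypE{} estimate combined with the fact that $W^{2,1,q}(Q)$ embeds into $\mathcal{C}^\delta(Q)$ by Lemma \ref{lemma:max_reg_embedding}, giving continuity of $t \mapsto m(\cdot,t)$ in $L^\infty(\mathbb{T}^d)$ and hence in $L^2(\mathbb{T}^d)$. The continuity of $\Pmap[w]$ on $[0,T]$ follows from continuity of $t \mapsto A[w](t)$ (using continuity of $w$ on $Q$, boundedness and continuity of $a$ from \hypD{}, and dominated convergence) composed with the continuous map $\phi$.

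The Lipschitz estimate for $\gammamap$ is the most direct: applying \hypE{} pointwise at fixed $(x,t)$ with $(x_1,t_1)=(x_2,t_2)=(x,t)$ yields
\[
|\gammamap[m_2](x,t) - \gammamap[m_1](x,t)| \leq C_1\,\|m_2(\cdot,t) - m_1(\cdot,t)\|_{L^2(\mathbb{T}^d)},
\]
and taking the supremum over $(x,t) \in Q$ gives the claim with constant $C_1$. For $\Pmap$, I combine the Lipschitz constant $C_2$ of $\phi$ from \hypE{} with Cauchy--Schwarz applied to the linear operator $A$:
\[
|\Pmap[w_2](t) - \Pmap[w_1](t)| \leq C_2\,\|a\|_{L^\infty(Q;\R^{k\times d})}\,|\mathbb{T}^d|^{1/2}\,\|w_2(\cdot,t)-w_1(\cdot,t)\|_{L^2(\mathbb{T}^d;\R^d)}.
\]
Squaring and integrating in $t$ then yields the $L^2(0,T;\R^k)$ Lipschitz bound.

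The only delicate point---and the only place where any bookkeeping is needed---is the domain of definition of $f$: strictly speaking, $f$ is defined on $Q \times \mathcal{D}_1(\mathbb{T}^d)$, whereas the statement quantifies over arbitrary $m \in W^{2,1,q}(Q)$. In all applications of the lemma $m$ arises as a Fokker-Planck solution and so lies in $\mathcal{D}_1(\mathbb{T}^d)$ for every $t$ by Lemma \ref{lemma:fp}; alternatively, \hypE{} allows $f$ to be extended to an $L^2(\mathbb{T}^d)$-Lipschitz map with the same constants $C_0$, $C_1$, preserving all the above estimates. Beyond this convention, I expect no real obstacle: every step reduces to a scalar pointwise inequality followed by integration or supremum.
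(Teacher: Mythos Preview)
Your proof is correct and follows essentially the same route as the paper: both derive the $\gammamap$ bounds directly from \hypC{} and \hypE{}, and both obtain the $\Pmap$ Lipschitz estimate by combining the Lipschitz continuity of $\phi$ with a bound on $A[w_2-w_1]$ via the $L^\infty$ bound on $a$. If anything, your version is slightly more careful: you verify the continuity claims explicitly, flag the domain issue for $f$ (the paper silently assumes $m(\cdot,t)\in\mathcal{D}_1(\mathbb{T}^d)$), and your chain for $\Pmap$---bounding $|A[w_2-w_1](t)|$ by Cauchy--Schwarz in space, then squaring and integrating in $t$---is cleaner than the paper's, which passes through an $L^\infty(0,T;L^1)$ norm before arriving at the same $L^2$ conclusion.
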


\subsection{Stability results for stochastic optimal control problems}
\label{sec:stability}

The main result of this section, Proposition \ref{prop:stability_oc} and its Corollary \ref{coro:stability_mfg}, shows that any approximate solution to Problem \eqref{pb:individual-control-problem-w} is close to its solution, for suitable norms. This is a key result for achieving linear convergence in the GCG method.

\begin{lemma} \label{lemma:quad_growth}
Let $(\hat{\gamma}, \hat{P}) \in \coupling{}$. Let $\hat{m} = \mmap[\hat{\gamma},\hat{P}]$, $\hat{v}= \vmap[\hat{\gamma},\hat{P}]$ and let $\hat{w}= \wmap[\hat{\gamma},\hat{P}]$. There exists a constant $C>0$ such that for any $(m,w) \in \mathcal{R}$, the following holds:
\begin{equation*}
\mathcal{Z}[\hat{\gamma}, \hat{P}](m,w)
- \mathcal{Z}[\hat{\gamma}, \hat{P}](\hat{m},\hat{w})
\geq \frac{1}{C} \int_Q |v(x,t)-\hat{v}(x,t)|^2 m(x,t) \, \dd x \, \dd t,
\end{equation*}
where $v \in L^\infty(Q;\R^d)$ is such that $w= mv$.
\end{lemma}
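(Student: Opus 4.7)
The plan is to use the value function $\hat u = \umap[\hat\gamma,\hat P]$ as a ``verification function'' and to perform a duality computation between the Hamilton-Jacobi-Bellman equation satisfied by $\hat u$ and the Fokker-Planck equation satisfied by $m$. This is a PDE analogue of the classical verification theorem in stochastic optimal control, exploiting the fact that the Legendre-Fenchel conjugate of $L(x,t,\cdot)$ is $-H(x,t,\cdot)$, with the optimizer being precisely $\hat v = -\bm{H}_p[\nabla \hat u + A^\star \hat P]$.

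Concretely, I would first write $(m,w)\in\mathcal R$ as $w=mv$ (on $\{m>0\}$) and compute $\frac{d}{dt}\int_{\mathbb T^d} \hat u(x,t) m(x,t)\,dx$, integrating by parts in $x$ (the periodic boundary conditions remove boundary terms and the Laplacian terms from the HJB and FP equations cancel). Substituting $\partial_t \hat u$ from the HJB equation and $\partial_t m$ from the FP equation yields, after rearrangement,
\begin{equation*}
\int_{\mathbb T^d} g(x) m(T,x)\,dx = \int_{\mathbb T^d} \hat u(0,x) m_0(x)\,dx + \int_Q \big(\bm H[\nabla\hat u+A^\star\hat P]-\hat\gamma\big)m + \langle\nabla\hat u,w\rangle\,dx\,dt.
\end{equation*}
Plugging this identity into the definition of $\mathcal Z[\hat\gamma,\hat P](m,w)$ and using $\tilde{\bm L}[m,w]=mL(x,t,v)$ together with $\langle\nabla\hat u+A^\star\hat P,w\rangle = m\langle\nabla\hat u+A^\star\hat P,v\rangle$ collapses the expression into
\begin{equation*}
\mathcal Z[\hat\gamma,\hat P](m,w) = \int_{\mathbb T^d}\hat u(0,x)m_0(x)\,dx + \int_Q m\,\bigl[L(x,t,v)+H(x,t,p)+\langle p,v\rangle\bigr]\,dx\,dt,
\end{equation*}
where I abbreviate $p=\nabla\hat u+A^\star\hat P$.

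At this stage the key observation is the Fenchel-Young inequality: $L(x,t,v)+H(x,t,p)+\langle p,v\rangle\geq 0$, with equality at $v=-H_p(x,t,p)=\hat v$. Moreover, \textbf{H1} provides strong convexity of $L(x,t,\cdot)$ with modulus $1/C_0$, and since $D_vL(x,t,\hat v)=-p$ by the first-order condition, one obtains the sharper quadratic lower bound
\begin{equation*}
L(x,t,v)+H(x,t,p)+\langle p,v\rangle \geq \frac{1}{2C_0}|v-\hat v|^2.
\end{equation*}
Integrating against $m$ yields the announced inequality; the constant case $(m,w)=(\hat m,\hat w)$ (where $v=\hat v$ on $\{m>0\}$) gives equality, identifying the first term $\int \hat u(0,\cdot)m_0\,dx$ as $\mathcal Z[\hat\gamma,\hat P](\hat m,\hat w)$.

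The main technical obstacle is justifying the integration by parts: it requires $\hat u\in W^{2,1,q}(Q)$ and $m\in W^{2,1,q}(Q)$, which are precisely the regularity statements of Proposition \ref{prop:hjb1} and Lemma \ref{lemma:fp}, together with the embedding of Lemma \ref{lemma:max_reg_embedding} to handle pointwise evaluations of $\nabla \hat u$. A secondary concern is the set $\{m=0\}$ on which $v$ is undefined: this is harmless because $\tilde{\bm L}[m,w]=0$ there (forced by the convention and the constraint $w=mv$), so both sides of the inequality ignore this set. Finally, the identity $\mathcal Z[\hat\gamma,\hat P](\hat m,\hat w)=\int\hat u(0,\cdot)m_0\,dx$ uses only that $\hat w=\hat m\hat v$ and that $\hat v$ realizes equality in Fenchel-Young, so no separate argument is needed.
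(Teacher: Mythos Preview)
Your argument is correct and follows essentially the same route as the paper's proof: both rely on testing the HJB equation for $\hat u$ against the Fokker--Planck equation for $m$ (integration by parts on the torus), and both use the strong convexity of $L$ to upgrade the Fenchel--Young inequality $L(v)+H(p)+\langle p,v\rangle\ge 0$ to the quadratic lower bound $\tfrac{1}{2C_0}|v-\hat v|^2$. The only difference is organizational: the paper works directly with the difference $\mathcal Z(m,w)-\mathcal Z(\hat m,\hat w)$ and packages the convexity step as a separate lemma (Lemma~\ref{lemma:H-L-quad}), whereas you first derive the closed formula $\mathcal Z[\hat\gamma,\hat P](m,w)=\int_{\mathbb T^d}\hat u(0,\cdot)m_0+\int_Q m\bigl[L(v)+H(p)+\langle p,v\rangle\bigr]$ and then subtract; the content is the same.
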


The proof of Lemma \ref{lemma:quad_growth} relies on the following inequality.

\begin{lemma} \label{lemma:H-L-quad}
Let $(x,t) \in Q$. Let $v \in \R^d$, $\hat{p} \in \R^d$, $m \geq 0$, and $\hat{m} \geq 0$. Let $\hat{v}= - H_p(x,t,\hat{p})$. Let $C>0$ be such that $L(x,t,\cdot)$ is strongly convex with modulus $1/C$. Then,
\begin{equation*}
L(x,t,v) m - L(x,t,\hat{v}) \hat{m} \geq  - H(x,t,p) (m - \hat{m}) 
- \langle \hat{p},w - \hat{w} \rangle + \frac{1}{C}|v - \hat{v}|^2 m.
\end{equation*}
\end{lemma}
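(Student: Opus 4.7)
The plan is to combine the strong convexity of $L(x,t,\cdot)$ with the Fenchel-type identity coming from the fact that $H(x,t,\cdot)$ is (up to a sign convention) the Legendre transform of $L(x,t,\cdot)$, with $\hat{v}$ the associated primal maximizer. I read the statement with $w = mv$ and $\hat{w} = \hat{m}\hat{v}$, and understand $H(x,t,p)$ to mean $H(x,t,\hat{p})$.

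First, from the definition $H(x,t,\hat{p}) = \sup_{v' \in \R^d} \bigl( -\langle \hat{p}, v'\rangle - L(x,t,v') \bigr)$ and the strict convexity of $L(x,t,\cdot)$ coming from \hypA{}, the supremum has a unique maximizer $v^\ast$, which by the envelope theorem satisfies $v^\ast = -H_p(x,t,\hat{p}) = \hat{v}$. The first-order optimality condition at $\hat{v}$ yields $D_v L(x,t,\hat{v}) = -\hat{p}$, and substituting $\hat{v}$ back into the supremum gives the Legendre identity
\[
L(x,t,\hat{v}) + H(x,t,\hat{p}) = -\langle \hat{p}, \hat{v}\rangle.
\]
Second, strong convexity of $L(x,t,\cdot)$ with modulus $1/C$, combined with $D_v L(x,t,\hat{v}) = -\hat{p}$, gives the pointwise bound
\[
L(x,t,v) - L(x,t,\hat{v}) \;\geq\; -\langle \hat{p}, v - \hat{v}\rangle + \tfrac{1}{C}\,|v-\hat{v}|^2,
\]
which I multiply by $m \geq 0$.

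Third, I decompose the linear cross-term by the elementary identity
\[
\langle \hat{p}, v - \hat{v}\rangle\, m = \langle \hat{p}, mv - m\hat{v}\rangle = \langle \hat{p}, w - \hat{w}\rangle - (m - \hat{m})\,\langle \hat{p}, \hat{v}\rangle,
\]
using $mv = w$ and $\hat{m}\hat{v} = \hat{w}$. Substituting $-\langle \hat{p}, \hat{v}\rangle = L(x,t,\hat{v}) + H(x,t,\hat{p})$ from the first step, the two terms $L(x,t,\hat{v})\,m + (m-\hat{m})\langle \hat{p}, \hat{v}\rangle$ collapse to $L(x,t,\hat{v})\,\hat{m} - H(x,t,\hat{p})(m - \hat{m})$. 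Collecting the remaining pieces then yields the stated inequality. The argument is purely algebraic; the only delicate point is bookkeeping the sign conventions inherited from the definition $H = \sup (-\langle p, \cdot\rangle - L)$, which in particular fixes $D_v L(x,t,\hat{v}) = -\hat{p}$ rather than $+\hat{p}$. Once this is recorded, the regrouping proceeds mechanically and no further estimates are required.
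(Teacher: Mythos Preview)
Your argument is correct: the Legendre identity $L(x,t,\hat{v}) + H(x,t,\hat{p}) = -\langle \hat{p}, \hat{v}\rangle$ together with the strong-convexity lower bound and the algebraic regrouping $m(v-\hat{v}) = (w-\hat{w}) - (m-\hat{m})\hat{v}$ yields exactly the stated inequality, and your reading of the conventions ($w=mv$, $\hat{w}=\hat{m}\hat{v}$, and $H(x,t,p)$ meaning $H(x,t,\hat{p})$) is the intended one. The paper does not write out a proof at all---it simply refers to \cite[Proof of Proposition~2]{BHP-schauder}---so your explicit derivation is in fact more detailed than what the paper provides; the Fenchel/strong-convexity mechanism you use is the standard one and is presumably what the cited reference does as well.
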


\begin{proof}
See \cite[Proof of Proposition 2]{BHP-schauder}.
\end{proof}

\begin{proof}[Proof of Lemma \ref{lemma:quad_growth}]
Using the definition of $\mathcal{Z}[\hat{\gamma},\hat{P}]$, we have
\begin{align}
& \mathcal{Z}[\hat{\gamma},\hat{P}](m,w)
- \mathcal{Z}[\hat{\gamma},\hat{P}](\hat{m},\hat{w}) =
\int_Q \big( L[v]m - L[\hat{v}]\hat{m} \big) \, \dd x \, \dd t \notag \\
& \qquad 
+ \int_Q \hat{\gamma}(m-\hat{m}) \, \dd x \, \dd t
+ \int_Q \langle A^*P, w - \hat{w} \rangle \, \dd x \, \dd t
+ \int_{\mathbb{T}^d} g (m(T)-\hat{m}(T)) \, \dd x. \label{eq:gr1}
\end{align}
We set $\hat{u}= \umap[\hat{\gamma},\hat{P}]$.
By definition of $\hat{v}$, we have $\hat{v} = -  \bm{H}_p[  \nabla \hat{u} +  A^\star \hat{P}]$. Applying Lemma \ref{lemma:H-L-quad} with $\hat{p}= \nabla \hat{u} + A^* \hat{P}$, we obtain
\begin{align*}
& \int_Q \big( L[v]m - L[\hat{v}]\hat{m} \big) \, \dd x \, \dd t
\geq
- \int_{Q} H[\nabla \hat{u} + A^* \hat{P}](m-\bar{m}) \, \dd x \, \dd t \\
& \qquad \quad - \int_Q \langle \nabla \hat{u} + A^* \hat{P}, w - \bar{w} \rangle \, \dd x \, \dd t
+ \frac{1}{C} \int_Q |v-\hat{v}|^2 m \, \dd x \, \dd t .
\end{align*}
We inject the obtained inequality into \eqref{eq:gr1} and we use $\bm{H}[\nabla \hat{u} + A^*\hat{P}] + \hat{\gamma}= -\partial_t \hat{u} - \Delta \hat{u}$. This yields
\begin{align*}
& \mathcal{Z}[\hat{\gamma},\hat{P}](m,w)
- \mathcal{Z}[\hat{\gamma},\hat{P}](\hat{m},\hat{w}) \geq
\int_Q \big( - \partial_t \hat{u} - \Delta \hat{u} \big)\big(m- \hat{m} \big) \, \dd x \, \dd t \\
& \quad 
+ \int_Q \langle -\nabla \hat{u} , w - \hat{w} \rangle \, \dd x \, \dd t
+ \int_{\mathbb{T}^d} g (m(T)-\hat{m}(T)) \, \dd x
+ \frac{1}{C}
\int_Q |v- \hat{v}|^2 m \, \dd x \, \dd t.
\end{align*}
The first three integrals in the right-hand side cancel out: this can be shown by doing an integration by parts and by using the Fokker-Planck equation satisfied by $m$ and $\hat{m}$. This concludes the proof.
\end{proof}

\begin{proposition} \label{prop:stability_oc}
Let $R>0$ and let $(\hat{\gamma},\hat{P}) \in \ballR$. Let $\hat{m}= \mmap[\hat{\gamma},\hat{P}]$ and let $\hat{w}= \wmap[\hat{\gamma},\hat{P}]$.
Let $(m,w) \in \mathcal{R}$ be such that $\| m \|_{L^\infty(Q)} \leq R$.
There exists a constant $C(R)$ such that for any $(m,w) \in \mathcal{R}$,
\begin{align*}
\| m- \hat{m} \|_{L^\infty(0,T;L^2(\mathbb{T}^d))} \leq {} & C(R) \sqrt{\sigma}, \\
\| w- \hat{w} \|_{L^2(Q;\R^d)} \leq {} & C(R) (\sqrt{\sigma} + \sigma),
\end{align*}
where $\sigma= \mathcal{Z}[\hat{\gamma},\hat{P}](m,w)- \mathcal{Z}[\hat{\gamma},\hat{P}](\hat{m},\hat{w})$.
\end{proposition}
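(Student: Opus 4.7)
The strategy is to combine the quadratic growth estimate of Lemma~\ref{lemma:quad_growth} with the Fokker--Planck stability result of Lemma~\ref{lemma:fp2}, then transfer the bound on $m$ to a bound on $w$ via the pointwise identity $w = mv$.

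First I would record the bound on the reference velocity. Since $(\hat{\gamma},\hat{P}) \in \ballR$, Lemma~\ref{lemma:other} yields $\|\hat{v}\|_{\feedback{}} + \|\hat{m}\|_{W^{2,1,q}(Q)} \leq C(R)$, and in particular $\|\hat{v}\|_{L^\infty(Q;\R^d)} \leq C(R)$. Next, applying Lemma~\ref{lemma:quad_growth} to $(\hat{\gamma},\hat{P})$ and the given pair $(m,w)\in\mathcal{R}$ gives, for the representative $v$ with $w=mv$,
\begin{equation*}
\int_Q |v-\hat{v}|^2 \, m \, \dd x \, \dd t \;\leq\; C(R)\,\sigma .
\end{equation*}

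Now I would apply Lemma~\ref{lemma:fp2} with $v_1=\hat{v}$, $m_1=\hat{m}=\fpmap[\hat{v}]$, and $v_2=v$, $m_2=m$ (which equals $\fpmap[v]$ by the Fokker--Planck equation defining $\mathcal{R}$). The hypotheses are satisfied since $\|\hat{v}\|_{\feedback{}}\leq C(R)$ and $\|m\|_{L^\infty(Q)}\leq R$ by assumption. This directly produces
\begin{equation*}
\|m-\hat{m}\|_{L^\infty(0,T;L^2(\mathbb{T}^d))} \;\leq\; C(R)\Big(\int_Q |v-\hat{v}|^2 m\,\dd x\,\dd t\Big)^{1/2} \;\leq\; C(R)\sqrt{\sigma},
\end{equation*}
which is the first announced estimate.

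For the second estimate I would split
\begin{equation*}
w-\hat{w} \;=\; mv - \hat{m}\hat{v} \;=\; m(v-\hat{v}) + (m-\hat{m})\hat{v},
\end{equation*}
and bound each term in $L^2(Q;\R^d)$. The first term satisfies
\begin{equation*}
\|m(v-\hat{v})\|_{L^2(Q;\R^d)}^2 \;\leq\; \|m\|_{L^\infty(Q)} \int_Q m\,|v-\hat{v}|^2 \,\dd x\,\dd t \;\leq\; R\cdot C(R)\,\sigma,
\end{equation*}
so this contributes $C(R)\sqrt{\sigma}$. The second term is controlled by $\|\hat{v}\|_{L^\infty(Q;\R^d)} \|m-\hat{m}\|_{L^2(Q)} \leq C(R)\sqrt{T}\,\|m-\hat{m}\|_{L^\infty(0,T;L^2(\mathbb{T}^d))} \leq C(R)\sqrt{\sigma}$ by the first estimate. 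Summing gives $\|w-\hat{w}\|_{L^2(Q;\R^d)}\leq C(R)\sqrt{\sigma}$, which is a fortiori dominated by $C(R)(\sqrt{\sigma}+\sigma)$.

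The only delicate point I anticipate is justifying the use of Lemma~\ref{lemma:fp2} given that the velocity $v$ associated to $(m,w)\in\mathcal{R}$ is only guaranteed to lie in $L^\infty(Q;\R^d)$ rather than in $\feedback{}$. This can be handled by noting that the conclusion of Lemma~\ref{lemma:fp2} depends on $v_2$ only through the quantity $\int_Q|v_2-v_1|^2 m_2$, and the argument in its proof (an energy estimate on $m-\hat{m}$ tested against itself) requires only the Fokker--Planck equation satisfied by $m$ and the $L^\infty$ bounds on $\hat{v}$ and $m$; thus the estimate extends to the present setting without additional regularity on $v$.
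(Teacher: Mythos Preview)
Your proof is correct and follows essentially the same route as the paper: apply Lemma~\ref{lemma:quad_growth} to obtain $\int_Q |v-\hat v|^2 m \leq C\sigma$, invoke Lemma~\ref{lemma:other} for $\|\hat v\|_{\feedback}\leq C(R)$, use Lemma~\ref{lemma:fp2} to get the $m$-estimate, and finish via the decomposition $w-\hat w = m(v-\hat v)+(m-\hat m)\hat v$. Two remarks: your explicit bookkeeping actually yields the sharper bound $\|w-\hat w\|_{L^2(Q;\R^d)}\leq C(R)\sqrt{\sigma}$ (the paper merely states the decomposition and the weaker conclusion); and your final paragraph correctly flags a point the paper glosses over, namely that the $v$ coming from $(m,w)\in\mathcal{R}$ is only in $L^\infty(Q;\R^d)$ rather than $\feedback$, while Lemma~\ref{lemma:fp2} is stated for $v_2\in\feedback$---your justification that the energy estimate in its proof only requires the Fokker--Planck relation for $m$ together with the $L^\infty$ bounds on $\hat v$ and $m$ is the right fix.
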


As a consequence of Proposition \ref{prop:stability_oc}, $(\mmap[\hat{\gamma},\hat{P}],\wmap[\hat{\gamma},\hat{P}])$ is the unique solution to Problem \eqref{pb:individual-control-problem-w}, with $(\gamma,P)= (\hat{\gamma},\hat{P})$.

\begin{proof}[Proof of Proposition \ref{prop:stability_oc}]
\label{proof:stability_oc}
Lemma \ref{lemma:H-L-quad} yields $\int_Q |v-\hat{v}|^2 m \, \dd x \, \dd t \leq C \sigma$.
By Lemma \ref{lemma:other}, we know that $\|\hat{v}\|_{W^{1,0,\infty}(Q;\R^d)} \leq C$, for some constant depending on $R$. Applying next Lemma \ref{lemma:fp2}, we obtain the estimate of $\| m- \hat{m} \|_{L^\infty(0,T;L^2(\mathbb{T}^d))}$. The estimate of $\| w- \hat{w} \|_{L^2(Q;\R^d)}$ follows directly from
$w-\hat{w}= m(v-\hat{v}) + (m-\hat{m})\hat{v}.
$
The proposition is proved.
\end{proof}

\begin{corollary} \label{coro:stability_mfg}
Let $R>0$. Let $(m,w) \in \mathcal{R}$ be such that $\| m \|_{L^\infty(Q)} \leq R$. There exists a constant $C(R)$ such that
\begin{align*}
\| m- \bar{m} \|_{L^\infty(0,T;L^2(\mathbb{T}^d))} \leq {} & C(R) \sqrt{\varepsilon} \\
\| w- \bar{w} \|_{L^2(Q;\R^d)} \leq {} & C(R) (\sqrt{\varepsilon} + \varepsilon),
\end{align*}
where $\varepsilon= \mathcal{J}(m,w) - \mathcal{J}(\bar{m},\bar{w})$.
\end{corollary}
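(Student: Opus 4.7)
The plan is to reduce the corollary to Proposition \ref{prop:stability_oc} applied at the equilibrium coupling $(\hat\gamma,\hat P) = (\bar\gamma,\bar P)$, and then to show that $\varepsilon$ dominates the linearized gap $\sigma$ thanks to the convexity of the potentials $F$ and $\Phi$.

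First I would verify that $(\bar\gamma,\bar P) \in \Xi_{R_0}$ for some $R_0$ depending only on the data: Theorem \ref{theo:existence} and Lemma \ref{lemma:coupling} applied to $(\bar m,\bar w)$ give $\|\bar\gamma\|_{W^{1,0,\infty}(Q)} + \|\bar P\|_{L^\infty(0,T;\R^k)} \leq C$. I would also observe that the MFG system \eqref{eq:MFG-gcg} exactly expresses that $\bar u = \umap[\bar\gamma,\bar P]$, $\bar v = \vmap[\bar\gamma,\bar P]$, $\bar m = \mmap[\bar\gamma,\bar P]$ and $\bar w = \wmap[\bar\gamma,\bar P]$, so the role of $(\hat m,\hat w)$ in Proposition \ref{prop:stability_oc} is played here by $(\bar m,\bar w)$.

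The key step is to compare $\varepsilon$ with
\[
\sigma = \mathcal{Z}[\bar\gamma,\bar P](m,w) - \mathcal{Z}[\bar\gamma,\bar P](\bar m,\bar w).
\]
Since $\bar\gamma(x,t) = f(x,t,\bar m(t))$ and $\bar P(t) = \phi(t,A\bar w(t)) = \nabla_z\Phi(t,A\bar w(t))$, the monotonicity of $f$ (which yields the subgradient inequality for the primitive $F$, as noted in the remark following \hypF) and the convexity of $\Phi$ in its second argument give, for every $t \in [0,T]$,
\begin{align*}
\bm{F}[m](t) - \bm{F}[\bar m](t) &\geq \int_{\mathbb{T}^d} \bar\gamma(x,t)(m(x,t)-\bar m(x,t))\,\dd x, \\
\bm{\Phi}[Aw](t) - \bm{\Phi}[A\bar w](t) &\geq \langle \bar P(t), A[w-\bar w](t)\rangle.
\end{align*}
Integrating in $t$ and adding $\mathcal{J}_1(m,w)-\mathcal{J}_1(\bar m,\bar w)$ to both sides yields
\[
\varepsilon = \mathcal{J}(m,w) - \mathcal{J}(\bar m,\bar w) \geq \mathcal{Z}[\bar\gamma,\bar P](m,w) - \mathcal{Z}[\bar\gamma,\bar P](\bar m,\bar w) = \sigma \geq 0.
\]

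Finally, I would invoke Proposition \ref{prop:stability_oc} with $(\hat\gamma,\hat P) = (\bar\gamma,\bar P) \in \Xi_{R_0}$ and the given $(m,w)\in\mathcal{R}$ with $\|m\|_{L^\infty(Q)}\leq R$: setting $R' = \max(R,R_0)$, it delivers a constant $C(R')$ for which
\[
\|m-\bar m\|_{L^\infty(0,T;L^2(\mathbb{T}^d))} \leq C(R')\sqrt{\sigma}, \qquad \|w-\bar w\|_{L^2(Q;\R^d)} \leq C(R')(\sqrt{\sigma}+\sigma).
\]
The inequality $\sigma \leq \varepsilon$ just established then gives the announced bounds with $C(R) \coloneqq C(R')$. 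I do not anticipate a genuine obstacle; the only point requiring care is the correct identification of $(\bar m,\bar w)$ as the output of the auxiliary mappings at $(\bar\gamma,\bar P)$ so that Proposition \ref{prop:stability_oc} can be applied verbatim.
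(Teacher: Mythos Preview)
Your proposal is correct and follows essentially the same route as the paper: identify $(\bar m,\bar w)=(\mmap[\bar\gamma,\bar P],\wmap[\bar\gamma,\bar P])$, establish $\sigma\le\varepsilon$, and then apply Proposition~\ref{prop:stability_oc}. The only cosmetic difference is that the paper packages the convexity argument giving $\sigma\le\varepsilon$ as Corollary~\ref{coro:lower_bound_full} (derived from Lemma~\ref{lemma:linearization}), whereas you prove that inequality directly from the subgradient inequalities for $F$ and $\Phi$.
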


\begin{proof}
By definition, $\bar{m}= \mmap[\bar{\gamma},\bar{P}]$ and $\bar{w}= \wmap[\bar{\gamma}, \bar{P}]$. We obtain the announced estimates by combining Corollary \ref{coro:lower_bound_full} and Proposition \ref{prop:stability_oc}.
\end{proof}

\subsection{Well-posedness of Algorithm \ref{algo:gcg} and proof of Theorem \ref{theo:variables}.}
\label{sec:convergence}

In this subsection the well-posedness of Algorithm \ref{algo:gcg} is established. The proof of Theorem \ref{theo:variables} is a direct consequence of the well-posedness and the stability results established in the previous subsection. 

\begin{proposition} \label{prop:well-posed}
Algorithm \ref{algo:gcg} generates sequences in the following sets:
$(\bar{m}_k,\bar{w}_k) \in \mathcal{R}$, $(\gamma_k, P_k) \in \coupling{}$, $(u_k,v_k) \in W^{2,1,q}(Q) \times \Theta$, $(m_k,w_k) \in
\mathcal{R}$,
for all $k \in \mathbb{N}$.
Moreover, there exists a constant $C>0$ such that $(\gamma_k,P_k) \in \coupling{}_{C}$ and such that
\begin{align*}
\| u_k \|_{W^{2,1,q}(Q)} +
\| v_k \|_{\feedback{}} \leq {} & C, \\
\| m_k \|_{W^{2,1,q}(Q)}
+
\| w_k \|_{\feedback{}} \leq {} & C, \\
\| \bar{m}_k \|_{W^{2,1,q}(Q)}
+
\| \bar{w}_k \|_{\feedback{}} \leq {} & C,
\end{align*}
for all $k \in \mathbb{N}$.
Finally, there exists a constant $C>0$ such that $\varepsilon_k \leq C$, for all $k \in \mathbb{N}$.
\end{proposition}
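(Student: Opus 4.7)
The plan is to chain together the auxiliary bounds from Subsection \ref{subsec:mappings_statement}, proceeding by induction on $k$. The base case is ensured by the choice $(\bar m_0, \bar w_0) \in \mathcal R$ in the initialization: by definition of $\mathcal R$ in \eqref{label:defR}, $\bar m_0 \in W^{2,1,q}(Q)$ and $\bar w_0 \in \Theta$ carry initial norm bounds, and there exists $\bar v_0 \in L^\infty(Q;\R^d)$ with $\bar w_0 = \bar m_0 \bar v_0$. I absorb all these quantities into an initial constant $C_0$.

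For the inductive step, I walk through the four substeps of Algorithm \ref{algo:gcg} in order. First, Lemma \ref{lemma:coupling} provides a bound on $\|\gamma_k\|_{W^{1,0,\infty}(Q)} + \|P_k\|_{L^\infty(0,T;\R^k)}$ depending only on the data, thanks to the boundedness of $f$ and $\phi$ under \hypC{}; hence $(\gamma_k, P_k) \in \ballR$ for some $R$ independent of $k$. Second, Proposition \ref{prop:hjb1} and Lemma \ref{lemma:other} then give uniform bounds $\|u_k\|_{W^{2,1,q}(Q)}$, $\|v_k\|_\Theta$, $\|m_k\|_{W^{2,1,q}(Q)}$, $\|w_k\|_\Theta \leq C(R)$, and place $(m_k, w_k) \in \mathcal R$. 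Third, Lemma \ref{lemma:convexityR} gives $(\bar m_{k+1}, \bar w_{k+1}) \in \mathcal R$, and by convexity of norms
\[ \|\bar m_{k+1}\|_{W^{2,1,q}(Q)} + \|\bar w_{k+1}\|_\Theta \leq \max\bigl\{\|\bar m_k\|_{W^{2,1,q}(Q)} + \|\bar w_k\|_\Theta,\ \|m_k\|_{W^{2,1,q}(Q)} + \|w_k\|_\Theta\bigr\}, \]
so the induction hypothesis propagates the bound. Inspecting the construction in the proof of Lemma \ref{lemma:convexityR} furthermore shows that $\bar v_{k+1}$ with $\bar w_{k+1} = \bar m_{k+1}\bar v_{k+1}$ can be selected with $\|\bar v_{k+1}\|_{L^\infty} \leq \max(\|\bar v_k\|_{L^\infty}, \|v_k\|_{L^\infty})$, which by induction is controlled by $\max(\|\bar v_0\|_{L^\infty}, C(R))$.

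It remains to bound $\varepsilon_k = \mathcal J(\bar m_k, \bar w_k) - \mathcal J(\bar m, \bar w)$ uniformly. The subtracted term is fixed and finite, so it suffices to bound $\mathcal J(\bar m_k, \bar w_k)$ from above. Decomposing $\mathcal J = \mathcal J_1 + \mathcal J_2$ via \eqref{eq:cost_decomposition}: the term $\mathcal J_2$ is bounded using \hypC{}, since the potentials $F$ and $\Phi$ of the bounded maps $f$ and $\phi$ have at most linear growth, controlled along the bounded trajectory of $(\bar m_k, A\bar w_k)$; the terminal contribution $\int_{\mathbb{T}^d} g \bar m_k(T)\, \dd x$ is bounded by $\|g\|_\infty$ since $\bar m_k(T)$ is a probability density; and for the perspective integrand one uses $L(x,t,v) \leq C_0(|v|^2 + 1)$ from \hypC{} to write $\tilde L[\bar m_k, \bar w_k] \leq C_0 \bar m_k(1 + |\bar v_k|^2)$ pointwise, whereupon the uniform $L^\infty$-bound on $\bar v_k$ obtained above together with $\int_{\mathbb T^d} \bar m_k(\cdot, t) \, \dd x = 1$ close the estimate.

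The main obstacle is precisely the propagation of a usable bound on the velocity $\bar v_k$ of the iterate across the convex-combination update, since separate bounds on $\bar m_k$ and $\bar w_k$ do not yield any $L^\infty$ control on $\bar v_k = \bar w_k / \bar m_k$. The explicit construction in the proof of Lemma \ref{lemma:convexityR}, showing that the $L^\infty$-bound on the velocity of a convex combination in $\mathcal R$ is dominated by the maximum of the constituent velocity bounds, is exactly what renders the last step of the induction workable.
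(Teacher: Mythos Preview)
Your argument is correct and follows the paper's proof closely for the well-posedness and the uniform bounds on $(\gamma_k,P_k)$, $(u_k,v_k)$, $(m_k,w_k)$, and $(\bar m_k,\bar w_k)$; the only real divergence is in how you bound $\varepsilon_k$. You propagate an $L^\infty$-bound on the velocity $\bar v_k$ through the convex-combination update (via the explicit construction in Lemma~\ref{lemma:convexityR}) and then estimate $\mathcal J(\bar m_k,\bar w_k)$ directly from $\tilde L[\bar m_k,\bar w_k]\le C_0\bar m_k(1+|\bar v_k|^2)$. The paper instead avoids tracking $\bar v_k$ altogether: it bounds $\mathcal J(m_k,w_k)$ uniformly using the already-established bounds on $m_k$ and $v_k$, and then exploits the convexity of $\mathcal J$ to get
\[
\mathcal J(\bar m_{k+1},\bar w_{k+1})\le (1-\delta_k)\,\mathcal J(\bar m_k,\bar w_k)+\delta_k\,\mathcal J(m_k,w_k),
\]
which propagates the bound by induction. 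The paper's route is shorter and uses only the convexity of $\mathcal J$, which is already central to the potential structure; your route is more explicit and would still work even without appealing to the global convexity of $\mathcal J$, at the price of carrying one extra inductive quantity.
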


\begin{proof}
The well-posedness of the algorithm $\bar{m}_k$, $\bar{w}_k$, $\gamma_k$, $P_k$, $u_k$, $v_k$, $m_k$, and $w_k$ can easily be established by induction.
Lemma \ref{lemma:coupling} shows the existence of a constant $C_0$, independent of $k$ such that $(\gamma_k,P_k) \in \Xi_{C_0}$, for any $k \in \mathbb{N}$. Applying next Proposition \ref{prop:hjb1} and Lemma \ref{lemma:other} with $R=C_0$, we deduce that $\| u_k \|_{W^{2,1,q}(Q)} +
\| v_k \|_{\feedback{}} \leq C$ and
$\| m_k \|_{W^{2,1,q}(Q)}
+
\| w_k \|_{\feedback{}} \leq C$, for some constant $C$ independent of $k$.
The inequality $\| \bar{m}_k \|_{W^{2,1,q}(Q)}
+
\| \bar{w}_k \|_{\feedback{}} \leq C$ follows then immediately from the triangle inequality.
Using the convexity of $\mathcal{R}$ given in Lemma \ref{lemma:convexityR}, we deduce by induction that $(\bar{m}_k,\bar{w}_k) \in \mathcal{R}$, for all $k \in \mathbb{N}$.
The bounds on $m_k$ and $v_k$ imply the existence of a constant $C$ such $\mathcal{J}(m_k,w_k) \leq C$, since
\begin{equation*}
\mathcal{J}(m_k,w_k)
= \int_Q \bm{L} [v_k] m_k \, \dd x \, \dd t
+ \int_0^T \big(\bm{F}[m_k] + \bm{\Phi}[A(m_k v_k)] \big) \dd t + \int_{\mathbb{T}^d} g m_k(T) \, \dd x.
\end{equation*}
The boundedness of $\mathcal{J}(\bar{m}_k,\bar{w}_k)$ follows then by induction, since $\mathcal{J}$ is convex and thus
\begin{equation*}
\mathcal{J}(\bar{m}_{k+1},\bar{w}_{k+1})
\leq
(1- \delta_k) \mathcal{J}(\bar{m}_{k},\bar{w}_{k})
+ \delta_k \mathcal{J}(m_k,w_k).
\end{equation*}
The proposition is proved.
\end{proof}

\paragraph{Proof of Theorem \ref{theo:variables}}
The first estimate concerning $\| \bar{m}_k - \bar{m} \|_{L^\infty(0,T;L^2(\mathbb{T}^d))}$ and $\| \bar{w}_k - \bar{w} \|_{L^2(Q;\R^d)}$ is obtained by combining Corollary \ref{coro:stability_mfg}, the boundedness of $\varepsilon_k$, and the boundedness of $\| m_k \|_{L^\infty(Q)}$.
The second estimate on $\| \gamma_k - \bar{\gamma} \|_{L^\infty(Q)}$ and $\| P_k - \bar{P} \|_{L^\infty(0,T;\R^k)}$ is obtained by combining the first estimate and Lemma \ref{lemma:coupling}.
The third estimate on $\| u_k - \bar{u} \|_{L^{\infty}(Q)}$ follows from the second one and from Lemma \ref{lemma:hjb2}.
Using Proposition \ref{prop:well-posed} and Lemma \ref{lemma:max_reg_embedding}, there exists $C>0$ such that $(\gamma_k,P_k) \in \coupling{}_C$ and $\| \bar{m}_k \|_{L^\infty(Q)} \leq C$, for all $k \in \mathbb{N}$.
Moreover, by construction,
$m_k= \mmap[\gamma_k,P_k]$ and $w_k= \wmap[\gamma_k,P_k]$ and the pair $(\bar{m}_k,\bar{w}_k)$ is $\sigma_k$-optimal for the minimization problem of $\mathcal{Z}[\gamma_k,P_k](\cdot)$. Therefore, Proposition \ref{prop:stability_oc} applies and yields the last estimate that was to be proved.

\subsection{Proof of Theorem \ref{theo:main}}
\label{subsec:proof_main_thm}

In this subsection we prove Theorem \ref{theo:main}, leveraging techniques from \cite{kunisch2022fast}.

\begin{lemma} \label{lemma:linearization}
Let $(m_1,w_1)$ and $(m_2,w_2)$ be in $\mathcal{R}$.  Let $\gamma_1= \gammamap[m_1]$ and let $P_1= \Pmap[w_1]$. Then, there exists a constant $C>0$, independent of $(m_1,w_1)$ and $(m_2,w_2)$ such that
\begin{equation} \label{eq:lower_bound2}
\int_Q \gamma_1 (m_2-m_1) \, \dd x \, \dd t
+ \int_0^T \langle A[w_2-w_1], P_1 \rangle \, \dd t
\leq \mathcal{J}_2(m_2,w_2) - \mathcal{J}_2(m_1,w_1)
\end{equation}
and
\begin{align}
& \mathcal{J}_2(m_2,w_2) - \mathcal{J}_2(m_1,w_1)
\leq
 \int_Q \gamma_1 (m_2-m_1) \, \dd x \, \dd t
+ \int_0^T \langle A[w_2-w_1], P_1 \rangle \, \dd t \notag \\
& \qquad + C_1 \int_0^T \| m_2(t,\cdot)- m_1(t,\cdot) \|_{L^2(\mathbb{T}^d)} \| m_2(t,\cdot)- m_1(t,\cdot) \|_{L^1(\mathbb{T}^d)} \, \dd t \notag \\
& \qquad + C_2 \int_0^T \Big| \int_{\mathbb{T}^d} a(x,t) (w_2(x,t) - w_1(x,t)) \, \dd x \Big|^2 \, \dd t.
\label{eq:upper_bound2}
\end{align}
\end{lemma}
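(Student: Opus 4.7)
My plan is to split $\mathcal{J}_2(m_2,w_2) - \mathcal{J}_2(m_1,w_1)$ into an $\bm{F}$-part and a $\bm{\Phi}$-part and to treat each separately. In both cases, the lower bound \eqref{eq:lower_bound2} will follow from first-order convexity, while the upper bound \eqref{eq:upper_bound2} will follow from the fundamental theorem of calculus combined with the Lipschitz estimates of \hypE{}.

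For the lower bound, the remark following \hypF{} shows that $F(t,\cdot)$ is convex with $f(\cdot,t,m_1)$ as a subgradient at $m_1$, so $F(t,m_2) - F(t,m_1) \geq \int_{\mathbb{T}^d} \gamma_1 (m_2 - m_1) \, \dd x$. Since \hypF{} also says that $\Phi(t,\cdot)$ is convex with gradient $\phi(t,\cdot)$, the same first-order inequality gives $\Phi(t, A[w_2](t)) - \Phi(t, A[w_1](t)) \geq \langle P_1(t), A[w_2 - w_1](t) \rangle$. Integrating in $t$ and summing the two inequalities yields \eqref{eq:lower_bound2}.

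For the upper bound, I would apply the primitive identity \eqref{primitive:F} to the $F$-part and the fundamental theorem of calculus $\Phi(t, z_2) - \Phi(t, z_1) = \int_0^1 \langle \phi(t, s z_2 + (1-s) z_1), z_2 - z_1 \rangle \, \dd s$ to the $\Phi$-part. Subtracting the respective affine terms $\int_{\mathbb{T}^d} \gamma_1 (m_2 - m_1) \, \dd x$ and $\langle P_1(t), A[w_2-w_1](t) \rangle$ expresses the two residuals as
\[
\int_0^1 \!\! \int_{\mathbb{T}^d} \big[ f(x,t,m_s) - f(x,t,m_1) \big] (m_2 - m_1) \, \dd x \, \dd s \quad \text{and} \quad \int_0^1 \langle \phi(t, z_s) - \phi(t, z_1), z_2 - z_1 \rangle \, \dd s,
\]
where $m_s = s m_2 + (1-s) m_1 \in \mathcal{D}_1(\mathbb{T}^d)$ by convexity of $\mathcal{D}_1(\mathbb{T}^d)$ and $z_s = s A[w_2](t) + (1-s) A[w_1](t)$.

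To close the estimate I would invoke \hypE{}. For the first residual, the integrand is dominated pointwise in $x$ by $C_1 s \| m_2 - m_1 \|_{L^2(\mathbb{T}^d)}$, a bound which is independent of $x$; factoring it out of the $x$-integral produces $C_1 s \| m_2 - m_1 \|_{L^2} \| m_2 - m_1 \|_{L^1}$, and integrating in $s$ contributes a factor $1/2$. For the second residual, the Lipschitz estimate $|\phi(t,z_s) - \phi(t,z_1)| \leq C_2 s |A[w_2-w_1](t)|$ combined with Cauchy--Schwarz gives $\tfrac{C_2}{2} |A[w_2 - w_1](t)|^2$. Integrating in time and absorbing the factor $1/2$ (since $C_i/2 \leq C_i$) yields \eqref{eq:upper_bound2}. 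I do not anticipate any substantive obstacle; the only subtle point worth flagging is that the $L^2$-Lipschitz modulus in \hypE{} is uniform in $x$, which is what allows the sharper $L^1$--$L^2$ product on the right-hand side of \eqref{eq:upper_bound2} rather than an $L^2$--$L^2$ product.
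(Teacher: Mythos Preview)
Your proposal is correct and follows essentially the same route as the paper: decompose the difference into the $F$-residual $(a)$ and the $\Phi$-residual $(b)$, use the primitive identity \eqref{primitive:F} (resp.\ the fundamental theorem of calculus) to write each as an integral remainder, and then bound the remainder from below by monotonicity/convexity (\hypF{}) and from above by the Lipschitz estimates (\hypE{}). The only cosmetic difference is that for the lower bound you invoke the convexity remark after \hypF{} directly, whereas the paper observes $(a)\geq 0$ from the integral representation via monotonicity; these are equivalent.
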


\begin{proof}
Using the definitions of $\gamma_1$ and $P_1$, we obtain
\begin{align*}
& \big( \mathcal{J}_2(m_2,w_2) - \mathcal{J}_2(m_1,w_1) \big)
-
\Big( \int_Q \gamma_1 (m_2-m_1) \, \dd x \, \dd t
+ \int_0^T \langle A[w_2-w_1], P_1 \rangle \, \dd t \Big) \\
& \qquad = (a) + (b),
\end{align*}
where
\begin{align*}
(a) {} & =  \int_0^T \Big( \bm{F}[m_2](t) -  \bm{F}[m_1](t)
- \int_{\mathbb{T}^d} f[m_1](x,t) \big( m_2(x,t)-m_1(x,t) \big) \, \dd x \Big) \, \dd t, \\
(b) {} & = \int_0^T \big( \bm{\Phi}[A w_2]  -  \bm{\Phi}[Aw_1]
- \langle \bm{\phi}[Aw_1](t), Aw_2(t)-A w_1(t) \rangle \big) \,
\dd t.
\end{align*}
Using Assumption \hypF{}, we obtain that
\begin{align*}
(a)
= {} &
\int_0^T \int_0^1 \int_{\mathbb{T}^d}
\big( f[m_1 + s(m_2-m_1)] - f[m_1] \big) ( m_2-m_1 ) \, \dd x \, \dd s \, \dd t.
\end{align*}
Then Assumptions \hypE{} and \hypF{} imply that
\begin{equation*}
0 \leq (a) \leq C_1 \int_0^T \| m_2(t,\cdot)- m_1(t,\cdot) \|_{L^2(\mathbb{T}^d)} \| m_2(t,\cdot)- m_1(t,\cdot) \|_{L^1(\mathbb{T}^d)} \, \dd t.
\end{equation*}
We estimate $(b)$ in a similar way and obtain inequalities \eqref{eq:lower_bound2} and \eqref{eq:upper_bound2} easily.
\end{proof}

\begin{corollary} \label{coro:lower_bound_full}
Let $(m_1,w_1)$ and $(m_2,w_2)$ be in $\mathcal{R}$.  Let $\gamma_1= \gammamap[m_1]$ and let $P_1= \Pmap[w_1]$.
Then,
\begin{equation*}
\mathcal{Z}[\gamma_1,P_1](m_2,w_2)
- \mathcal{Z}[\gamma_1,P_1](m_1,w_1)
\leq
\mathcal{J}(m_2,w_2) - \mathcal{J}(m_1,w_1).
\end{equation*}
\end{corollary}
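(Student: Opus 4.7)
The proof is essentially a bookkeeping step that reduces the statement to the inequality \eqref{eq:lower_bound2} already proved in Lemma \ref{lemma:linearization}.

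My plan is to expand both sides of the desired inequality using the definitions $\mathcal{Z}[\gamma_1,P_1](m,w) = \mathcal{J}_1(m,w) + \int_Q \gamma_1 m \, \dd x \, \dd t + \int_0^T \langle A[w](t), P_1(t) \rangle \, \dd t$ and $\mathcal{J} = \mathcal{J}_1 + \mathcal{J}_2$. Subtracting the telescoping $\mathcal{J}_1(m_2,w_2) - \mathcal{J}_1(m_1,w_1)$ contribution appearing identically on both sides, the stated inequality becomes equivalent to
\begin{equation*}
\int_Q \gamma_1 (m_2 - m_1) \, \dd x \, \dd t + \int_0^T \langle A[w_2-w_1](t), P_1(t) \rangle \, \dd t \leq \mathcal{J}_2(m_2,w_2) - \mathcal{J}_2(m_1,w_1),
\end{equation*}
which is precisely inequality \eqref{eq:lower_bound2} of Lemma \ref{lemma:linearization}. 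There are no real obstacles here since the hard work (convexity of $F$ and $\Phi$ via the monotonicity assumption \hypF{} and the existence of potentials) has already been done in the proof of that lemma; the corollary only packages the lower bound in terms of the full cost $\mathcal{J}$ and the partially linearized cost $\mathcal{Z}[\gamma_1,P_1]$. Hence the proof is one line invoking \eqref{eq:lower_bound2}.
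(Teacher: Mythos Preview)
Your proposal is correct and matches the paper's own proof: the paper also states that the corollary is an immediate consequence of inequality \eqref{eq:lower_bound2} from Lemma \ref{lemma:linearization} together with the definitions of $\mathcal{J}$ and $\mathcal{Z}$. Your expansion making explicit that the $\mathcal{J}_1$ contributions cancel on both sides is exactly the bookkeeping the paper leaves implicit.
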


\begin{proof}
This is an immediate consequence of inequality \eqref{eq:lower_bound2} from Lemma \ref{lemma:linearization} and the definitions of $\mathcal{J}$ and $\mathcal{Z}$.
\end{proof}

\begin{lemma} \label{eq:seq_gamma}
For all $k \in \mathbb{N}$, it holds that $\varepsilon_k \leq \sigma_k$.
\end{lemma}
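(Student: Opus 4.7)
The plan is to exploit two facts: first, Corollary \ref{coro:lower_bound_full} says that the partially linearized cost $\mathcal{Z}[\gamma_k,P_k]$ lies below the full cost $\mathcal{J}$ (up to a common additive constant) because of the convexity of the coupling potentials encoded in Assumption \hypF{}; second, by the preceding lemma, $(m_k,w_k)$ is the unique \emph{minimizer} of $\mathcal{Z}[\gamma_k,P_k]$ on $\mathcal{R}$.

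The first step is to apply Corollary \ref{coro:lower_bound_full} with $(m_1,w_1)=(\bar{m}_k,\bar{w}_k)$ (so that $\gamma_1=\gammamap[\bar{m}_k]=\gamma_k$ and $P_1=\Pmap[\bar{w}_k]=P_k$) and $(m_2,w_2)=(\bar{m},\bar{w}) \in \mathcal{R}$. This yields
\begin{equation*}
\mathcal{Z}[\gamma_k,P_k](\bar{m},\bar{w}) - \mathcal{Z}[\gamma_k,P_k](\bar{m}_k,\bar{w}_k) \leq \mathcal{J}(\bar{m},\bar{w}) - \mathcal{J}(\bar{m}_k,\bar{w}_k) = -\varepsilon_k,
\end{equation*}
that is, $\mathcal{Z}[\gamma_k,P_k](\bar{m}_k,\bar{w}_k) - \mathcal{Z}[\gamma_k,P_k](\bar{m},\bar{w}) \geq \varepsilon_k$.

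The second step is simply to observe that $(\bar{m},\bar{w}) \in \mathcal{R}$ is a feasible point of \eqref{pb:individual-control-problem-w} with coupling $(\gamma_k,P_k)$, while $(m_k,w_k)=(\mmap[\gamma_k,P_k],\wmap[\gamma_k,P_k])$ is its minimizer. Hence $\mathcal{Z}[\gamma_k,P_k](m_k,w_k) \leq \mathcal{Z}[\gamma_k,P_k](\bar{m},\bar{w})$. Combining this with the inequality of the previous step and the definition \eqref{eq:sigmak} of $\sigma_k$ gives
\begin{equation*}
\sigma_k = \mathcal{Z}[\gamma_k,P_k](\bar{m}_k,\bar{w}_k) - \mathcal{Z}[\gamma_k,P_k](m_k,w_k) \geq \mathcal{Z}[\gamma_k,P_k](\bar{m}_k,\bar{w}_k) - \mathcal{Z}[\gamma_k,P_k](\bar{m},\bar{w}) \geq \varepsilon_k,
\end{equation*}
which is the claim.

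There is no real obstacle; the only thing to be careful about is matching the arguments in Corollary \ref{coro:lower_bound_full} correctly (the linearization must be taken at the current iterate $(\bar{m}_k,\bar{w}_k)$, whose induced coupling is precisely $(\gamma_k,P_k)$), and to use that $(\bar{m},\bar{w}) \in \mathcal{R}$, which follows from Lemma \ref{lemma:potential_struc}.
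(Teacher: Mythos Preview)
Your proof is correct and follows essentially the same approach as the paper: apply Corollary \ref{coro:lower_bound_full} with the linearization taken at $(\bar{m}_k,\bar{w}_k)$, then use that $(m_k,w_k)$ minimizes $\mathcal{Z}[\gamma_k,P_k]$ over $\mathcal{R}$. The only cosmetic difference is that the paper states the corollary for a generic $(m,w)\in\mathcal{R}$ and then takes the supremum, whereas you instantiate directly at $(m,w)=(\bar m,\bar w)$; these are equivalent.
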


\begin{proof}
By Corollary \ref{coro:lower_bound_full}, we have for all $(m,w) \in \mathcal{R}$
\begin{equation*}
\mathcal{Z}[\gamma_k,P_k](\bar{m}_k,\bar{w}_k)
- \mathcal{Z}[\gamma_k,P_k](m,w)
\leq
\mathcal{J}(\bar{m}_k,\bar{w}_k)
- \mathcal{J}(m,w) \\
\leq \varepsilon_k.
\end{equation*}
By definition, $\sigma_k$ is the supremum of the left-hand side with respect to $(m,w)$. The conclusion follows immediately.
\end{proof}

\begin{lemma} \label{lemma:upper_bound}
Let $C_1>0$ and $C_2>0$ denote the Lipschitz constants of $f$ and $\phi$ (see Assumption \hypE{}). Then for any $\delta \in [0,1]$, it holds that
\begin{equation} \label{eq:upper_bound1234}
\mathcal{J}(\bar{m}_k^\delta,\bar{w}_k^\delta) \leq
\mathcal{J}(\bar{m}_{k},\bar{w}_{k}) - \delta \sigma_k + \big( C_1 D_k^{(1)} + C_2 D_k^{(2)} \big) \delta^2,
\end{equation}
where $(\bar{m}_k^{\delta},\bar{w}_k^{\delta})$ is defined by \eqref{eq:def_conv_k} and where $D_k^{(1)}$ and $D_k^{(2)}$ are defined by \eqref{eq:D_k}.
Moreover, there exists a constant $C>0$ such that
\begin{equation} \label{eq:upper_bound_bis}
\mathcal{J}(\bar{m}_k^\delta,\bar{w}_k^\delta) \leq
\mathcal{J}(\bar{m}_{k},\bar{w}_{k}) - \delta \sigma_k + C \sigma_k \delta_k^2.
\end{equation}
\end{lemma}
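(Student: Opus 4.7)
The plan is to split $\mathcal{J} = \mathcal{J}_1 + \mathcal{J}_2$ and treat each term separately. Note that $\mathcal{J}_1$ is a convex functional of $(m,w)$, since $\tilde{L}$ is convex (as noted after its definition) and the terminal term $\int g\, m(T) \, \dd x$ is linear. Hence, writing $(\bar{m}_k^\delta,\bar{w}_k^\delta) = (1-\delta)(\bar{m}_k,\bar{w}_k) + \delta (m_k,w_k)$, convexity yields
\begin{equation*}
\mathcal{J}_1(\bar{m}_k^\delta,\bar{w}_k^\delta) \leq \mathcal{J}_1(\bar{m}_k,\bar{w}_k) + \delta \bigl(\mathcal{J}_1(m_k,w_k) - \mathcal{J}_1(\bar{m}_k,\bar{w}_k)\bigr).
\end{equation*}
For $\mathcal{J}_2$, I apply the upper bound \eqref{eq:upper_bound2} of Lemma \ref{lemma:linearization} with $(m_1,w_1) = (\bar{m}_k,\bar{w}_k)$ and $(m_2,w_2) = (\bar{m}_k^\delta,\bar{w}_k^\delta)$, recalling that $\gamma_k = \gammamap[\bar{m}_k]$ and $P_k = \Pmap[\bar{w}_k]$. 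The differences $\bar{m}_k^\delta - \bar{m}_k = \delta(m_k - \bar{m}_k)$ and $\bar{w}_k^\delta - \bar{w}_k = \delta(w_k - \bar{w}_k)$ factor out of the linear terms, while the Lipschitz remainder terms are quadratic in $\delta$.

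Summing the two bounds gives
\begin{equation*}
\mathcal{J}(\bar{m}_k^\delta,\bar{w}_k^\delta) \leq \mathcal{J}(\bar{m}_k,\bar{w}_k) + \delta\,\Delta_k + \delta^2\bigl(C_1 D_k^{(1)} + C_2 D_k^{(2)}\bigr),
\end{equation*}
where
\begin{equation*}
\Delta_k = \bigl(\mathcal{J}_1(m_k,w_k) - \mathcal{J}_1(\bar{m}_k,\bar{w}_k)\bigr) + \int_Q \gamma_k (m_k - \bar{m}_k)\,\dd x\,\dd t + \int_0^T \langle A[w_k - \bar{w}_k], P_k\rangle \,\dd t.
\end{equation*}
The key observation is that $\Delta_k = \mathcal{Z}[\gamma_k,P_k](m_k,w_k) - \mathcal{Z}[\gamma_k,P_k](\bar{m}_k,\bar{w}_k) = -\sigma_k$ by the definition \eqref{eq:sigmak} of the exploitability. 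This proves \eqref{eq:upper_bound1234}.

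For the refined bound \eqref{eq:upper_bound_bis}, it suffices to show that $C_1 D_k^{(1)} + C_2 D_k^{(2)} \leq C\sigma_k$ for some constant $C$ independent of $k$. On the torus $\mathbb{T}^d$ we have the continuous embedding $L^2(\mathbb{T}^d) \hookrightarrow L^1(\mathbb{T}^d)$, hence
\begin{equation*}
D_k^{(1)} \leq C \int_0^T \|m_k(t,\cdot)-\bar{m}_k(t,\cdot)\|_{L^2(\mathbb{T}^d)}^2\,\dd t \leq C T\, \|m_k - \bar{m}_k\|_{L^\infty(0,T;L^2(\mathbb{T}^d))}^2.
\end{equation*}
Similarly, since $a$ is bounded by Assumption \hypC{}/\hypD{}, we have $|A[w_k - \bar{w}_k](t)| \leq C\|w_k(t,\cdot) - \bar{w}_k(t,\cdot)\|_{L^2(\mathbb{T}^d)}$, so that $D_k^{(2)} \leq C \|w_k - \bar{w}_k\|_{L^2(Q;\R^d)}^2$. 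The last estimate of Theorem \ref{theo:variables} then bounds both right-hand sides by $C\sigma_k$, completing the proof. The main (minor) subtlety is ensuring that the stability estimate of Theorem \ref{theo:variables} may indeed be invoked here; this is legitimate because that estimate is obtained in Subsection \ref{sec:convergence} from Proposition \ref{prop:stability_oc} applied to the $\sigma_k$-optimal pair $(\bar{m}_k,\bar{w}_k)$, independently of the present lemma.
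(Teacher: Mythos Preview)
Your proof is correct and follows essentially the same route as the paper's: split $\mathcal{J}=\mathcal{J}_1+\mathcal{J}_2$, use convexity of $\mathcal{J}_1$ together with the upper bound \eqref{eq:upper_bound2} of Lemma~\ref{lemma:linearization} applied at $(\bar m_k,\bar w_k)$ and $(\bar m_k^\delta,\bar w_k^\delta)$, identify the linear increment with $-\sigma_k$, and then invoke the last estimate of Theorem~\ref{theo:variables} to bound $D_k^{(1)}+D_k^{(2)}$ by $C\sigma_k$. Your treatment is in fact a bit more explicit than the paper's (you keep the exact $C_1 D_k^{(1)}+C_2 D_k^{(2)}$ remainder to obtain \eqref{eq:upper_bound1234} before simplifying, and you spell out the $L^2\hookrightarrow L^1$ embedding and the boundedness of $a$), and your remark about non-circularity with Theorem~\ref{theo:variables} is well taken.
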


Recall that the terms $D_k^{(1)}$ and $D_k^{(2)}$ represent the distance of the current approximate solution to the solution of the (partially) linearized problem. In the classical proof of convergence of the Frank-Wolfe algorithm, one writes a similar estimate to \eqref{eq:upper_bound1234}, where $D_k^{(1)}$ and $D_k^{(2)}$ are simply bounded by some constant. In order to achieve linear convergence (instead of the classical sublinear rate of convergence), it is crucial to keep these terms and to estimate them with the exploitability $\sigma_k$, as will become clear in the next section. This proof technique is largely inspired by \cite[Section 4.3]{kunisch2022fast}.

\begin{proof}[Proof of Lemma \ref{lemma:upper_bound}]
The proof relies on the decomposition $\mathcal{J}= \mathcal{J}_1 + \mathcal{J}_2$ introduced in \eqref{eq:cost_decomposition}.
First, by convexity of $\tilde{L}$, we have
\begin{equation}
\mathcal{J}_1(\bar{m}_k^{\delta},\bar{w}_k^{\delta}) -
\mathcal{J}_1(\bar{m}_k,\bar{w}_k)
\leq \delta
\big( \mathcal{J}_1(m_k,w_k) - \mathcal{J}_1(\bar{m}_k,\bar{w}_k) \big). \label{eq:bound_dev1}
\end{equation}
Next, using inequality \eqref{eq:upper_bound2} of Lemma \ref{lemma:linearization}, we obtain that
\begin{align}
& \mathcal{J}_2(\bar{m}_k^{\delta},\bar{w}_k^{\delta}) -
\mathcal{J}_2(\bar{m}_k,\bar{w}_k)
\leq \delta \int_Q \gamma_k (m_k-\bar{m}_k) \, \dd x \, \dd t \notag \notag \\
& \quad + \delta \int_0^T \langle A[w_k-\bar{w}_k], P_k \rangle \, \dd t
+ C \delta^2 \Big( \| m_k- \bar{m}_k \|_{L^2(Q)}^2 + \| w_k- \bar{w}_k \|_{L^2(Q;\R^d)}^2 \Big).
\label{eq:bound_dev2}
\end{align}
By definition of $\mathcal{Z}[\gamma_k,P_k]$, we have
\begin{align}
\sigma_k
= {} & \mathcal{Z}[\gamma_k,P_k](\bar{m}_k,\bar{w}_k)-\mathcal{Z}[\gamma_k,P_k](m_k,w_k) \notag \\[0.5em]
= {} &
\mathcal{J}_1(m_k,w_k) - \mathcal{J}_1(\bar{m}_k,\bar{w}_k) \notag \\
& \quad + \int_Q \gamma_k (m_k-\bar{m}_k) \, \dd x \, \dd t
+ \int_0^T \langle A[w_k-\bar{w}_k], P_k \rangle \, \dd t. \label{eq:zrelation}
\end{align}
Finally, by Theorem \ref{theo:variables} we have
\begin{equation} \label{eq:boundsigma}
\| m_k - \bar{m}_k \|_{L^\infty(0,T;L^2(\mathbb{T}^d))}^2
+
\| w_k - \bar{w}_k \|_{L^2(Q;\R^d)}^2 \leq C \sigma_k.
\end{equation}
Summing up \eqref{eq:bound_dev1} and \eqref{eq:bound_dev2} and combining the result with \eqref{eq:zrelation} and \eqref{eq:boundsigma}, we obtain the announced result.
\end{proof}

\begin{lemma} \label{lemma:exploitability}
There exists $C>0$ such that $\sigma_k \leq C \varepsilon_k$, for all $k \in \mathbb{N}$.
\end{lemma}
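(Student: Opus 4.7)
The strategy is to exploit the minimality of $(m_k,w_k)$ for $\mathcal{Z}[\gamma_k,P_k]$, compare the gap at $(\bar{m}_k,\bar{w}_k)$ versus $(\bar{m},\bar{w})$, and then use the sharp upper bound \eqref{eq:upper_bound2} of Lemma \ref{lemma:linearization} whose remainder is \emph{quadratic} in the distance between the two inputs.

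First, since $(m_k,w_k) = (\mmap[\gamma_k,P_k],\wmap[\gamma_k,P_k])$ is the unique minimizer of $\mathcal{Z}[\gamma_k,P_k]$ over $\mathcal{R}$ and $(\bar{m},\bar{w}) \in \mathcal{R}$, we have $\mathcal{Z}[\gamma_k,P_k](m_k,w_k) \leq \mathcal{Z}[\gamma_k,P_k](\bar{m},\bar{w})$. Plugging this into the definition \eqref{eq:sigmak} of $\sigma_k$ gives
\begin{equation*}
\sigma_k \leq \mathcal{Z}[\gamma_k,P_k](\bar{m}_k,\bar{w}_k) - \mathcal{Z}[\gamma_k,P_k](\bar{m},\bar{w}).
\end{equation*}

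Next, I would apply Lemma \ref{lemma:linearization}, inequality \eqref{eq:upper_bound2}, with $(m_1,w_1) = (\bar{m}_k,\bar{w}_k)$ (so that $\gamma_1 = \gamma_k$ and $P_1 = P_k$) and $(m_2,w_2) = (\bar{m},\bar{w})$. Combining this with the identity $\mathcal{Z}[\gamma_k,P_k](m,w) = \mathcal{J}_1(m,w) + \int_Q \gamma_k m \, \dd x\, \dd t + \int_0^T \langle A[w],P_k\rangle \, \dd t$ and the decomposition $\mathcal{J}= \mathcal{J}_1 + \mathcal{J}_2$, the bound becomes
\begin{equation*}
\sigma_k \leq \mathcal{J}(\bar{m}_k,\bar{w}_k) - \mathcal{J}(\bar{m},\bar{w}) + C_1 \bar{D}_k^{(1)} + C_2 \bar{D}_k^{(2)} = \varepsilon_k + C_1 \bar{D}_k^{(1)} + C_2 \bar{D}_k^{(2)},
\end{equation*}
where $\bar{D}_k^{(1)}$ and $\bar{D}_k^{(2)}$ are the analogues of \eqref{eq:D_k} with $(m_k,w_k)$ replaced by $(\bar{m},\bar{w})$, i.e.\ they measure the distance from $(\bar{m}_k,\bar{w}_k)$ to the equilibrium.

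It remains to control these two remainders by $\varepsilon_k$. For the first one, Cauchy--Schwarz on the torus gives $\|\bar{m}(t)-\bar{m}_k(t)\|_{L^1(\mathbb{T}^d)} \leq |\mathbb{T}^d|^{1/2}\|\bar{m}(t)-\bar{m}_k(t)\|_{L^2(\mathbb{T}^d)}$, so $\bar{D}_k^{(1)} \leq C\|\bar{m}-\bar{m}_k\|_{L^\infty(0,T;L^2(\mathbb{T}^d))}^2$. For the second one, the boundedness of $a$ yields $\bar{D}_k^{(2)} \leq C\|\bar{w}-\bar{w}_k\|_{L^2(Q;\R^d)}^2$. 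Theorem \ref{theo:variables} then bounds both squared quantities by $C\varepsilon_k$, so the conclusion $\sigma_k \leq C\varepsilon_k$ follows.

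The one delicate point is orienting Lemma \ref{lemma:linearization} correctly: we need the direction in which the positive quadratic remainder appears on the \emph{right-hand side}, so that after the cancellation with the minimality inequality it ends up as an upper bound on $\sigma_k$. Once this is done, the fact that the remainders are quadratic in $(\bar{m}_k-\bar{m},\bar{w}_k-\bar{w})$ is precisely what upgrades the square-root estimates of Theorem \ref{theo:variables} into a linear dependence on $\varepsilon_k$.
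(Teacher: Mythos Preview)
Your argument is correct. It takes a genuinely different route from the paper's proof, though both rest on the same building blocks (Lemma \ref{lemma:linearization} and Theorem \ref{theo:variables}).

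The paper argues via the descent inequality of Lemma \ref{lemma:upper_bound}: since $(\bar m_k^\delta,\bar w_k^\delta)\in\mathcal{R}$ and $(\bar m,\bar w)$ is optimal,
\[
\mathcal{J}(\bar m,\bar w)\leq \mathcal{J}(\bar m_k^\delta,\bar w_k^\delta)\leq \mathcal{J}(\bar m_k,\bar w_k)-\delta\sigma_k+C\delta^2\sigma_k,
\]
which gives $0\leq \varepsilon_k-(1-C\delta)\delta\sigma_k$; optimizing in $\delta$ (take $\delta=1/(2C)$) yields $\sigma_k\leq 4C\,\varepsilon_k$. This uses the \emph{last} estimate of Theorem \ref{theo:variables} (the distance $(m_k,w_k)\leftrightarrow(\bar m_k,\bar w_k)$ controlled by $\sqrt{\sigma_k}$), packaged inside Lemma \ref{lemma:upper_bound}.

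Your argument instead compares $\mathcal{Z}[\gamma_k,P_k]$ at $(\bar m_k,\bar w_k)$ versus the equilibrium $(\bar m,\bar w)$, applies \eqref{eq:upper_bound2} directly, and controls the quadratic remainder via the \emph{first} estimate of Theorem \ref{theo:variables} (the distance $(\bar m_k,\bar w_k)\leftrightarrow(\bar m,\bar w)$ controlled by $\sqrt{\varepsilon_k}$). This avoids the intermediate convex combination $(\bar m_k^\delta,\bar w_k^\delta)$ and the optimization over $\delta$, at the cost of invoking Lemma \ref{lemma:linearization} once more rather than reusing Lemma \ref{lemma:upper_bound}. The paper's route has the mild economy of recycling a lemma it needs anyway for the main convergence proof; yours is arguably more transparent as a standalone argument. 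There is no circularity in your use of Theorem \ref{theo:variables}, since its first estimate is proved from Corollary \ref{coro:stability_mfg} and Proposition \ref{prop:well-posed}, independently of the present lemma.
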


\begin{proof}
For any $\delta \in [0,1]$, Lemma \ref{lemma:upper_bound} yields
$\mathcal{J}(\bar{m},\bar{w}) \leq \mathcal{J}(\bar{m}_{k},\bar{w}_{k}) - \delta \sigma_k + C \delta^2 \sigma_k$.
It follows that
\begin{equation}
0 \leq \varepsilon_k - (1 - C \delta) \delta \sigma_k.
\end{equation}
Increasing if necessary the value of $C$, we can assume that $C \geq 1/2$. Taking $\delta= 1/(2C)$, we conclude that $0 \leq \varepsilon - 1/(4C) \sigma_k$, as was to be proved.
\end{proof}

\paragraph{Proof of Theorem \ref{theo:main}}
\label{subsec:proof_main}

 We start with the proof of the second part of the theorem, which is almost direct.
Combining the upper bound of the cost function proved in Lemma \ref{lemma:upper_bound} (inequality \eqref{eq:upper_bound_bis}) with the inequality $\varepsilon_k \leq \sigma_k$ and the bound on the exploitability obtained in Lemma \ref{lemma:exploitability}, we obtain the following inequality:
\begin{equation*}
\varepsilon_{j+1} \leq
\big( 1 - \delta_j + C \delta_j^2 \big) \varepsilon_j, \quad \forall j \in \mathbb{N}.
\end{equation*}
Here the constant $C$ is independent of the choice of stepsize.
Multiplying the obtained inequalities for $j=0,\ldots,k$, we obtain
\begin{equation*}
\varepsilon_{j+1}
\leq \varepsilon_0
\exp \Big(
\sum_{j=0}^k \, \ln \left( 1 - \delta_j + C \delta_j^2 \right) \Big).
\end{equation*}
Using next the inequality $\ln(x) \leq x-1$, satisfied for any $x>0$, we obtain the desired result, inequality \eqref{eq:conv_general}.

Let us consider the case of adaptive stepsizes. Let us fix the iteration number $k$. It suffices to show that for $\delta_k$ satisfying either \eqref{eq:stepsize_rule1}, \eqref{eq:stepsize_rule2}, or \eqref{eq:stepsize_rule3}, there exists a constant $\beta \in (0,1)$, independent of $k$, such that $\varepsilon_{k+1} \leq \beta \varepsilon_k$.

\emph{Case of the QAG condition.} The main idea is to show that the QAG condition is satisfied when $\delta$ is smaller than a certain threshold, which is independent of $k$.
Let $\delta > 0$ be such that the condition is not satisfied. Then, using Lemma \ref{lemma:upper_bound},
\begin{equation*}
\mathcal{J}(\bar{m},\bar{w}) - c \delta \sigma_k
<
\mathcal{J}(\bar{m}_k^{\delta},\bar{w}_k^{\delta})
\leq
\mathcal{J}(\bar{m},\bar{w})  - \delta \sigma_k + C \delta^2 \sigma_k.
\end{equation*}
Re-arranging, we deduce that $\delta > \bar{\delta}:=\frac{(1-c)}{C} > 0$. By contraposition, the QAG condition holds true for any $\delta$ such that $\delta \leq \bar{\delta}$.

We can now prove that $i_k$ is finite and uniformly bounded. Two cases can be considered. If $\bar{\delta}\geq 1$, then $i_k= 0$ and $\delta_k= 1$. Otherwise, for any $j \in \mathbb{N}$, we have
\begin{equation*}
\tau^j \leq \bar{\delta}
\Longleftrightarrow
j \geq \mathrm{ln}(\bar{\delta})/\mathrm{ln}(\tau).
\end{equation*}
Therefore, $i_k \leq \lceil \mathrm{ln}(\bar{\delta}) /\mathrm{ln}(\tau) \rceil$. If $i_k=0$, then $\delta_k= 1$. Otherwise, if $i_k > 0$, then $\tau^{i_k-1}$ does not satisfy the QAG condition and therefore, $\tau^{i_k-i} \geq \bar{\delta}$ and thus $\delta_k= \tau^{i_k} \geq \tau \bar{\delta}$.
So, in all cases, we have $\delta_k \geq \delta_{\min} := \min(1, \tau \bar{\delta})$. It follows that
\begin{align}
\varepsilon_{k+1}
= {} &
\mathcal{J}(\bar{m}_{k+1},\bar{w}_{k+1})
- \mathcal{J}(\bar{m},\bar{w}) \notag \\
\leq {} &
\big(
\mathcal{J}(\bar{m}_{k},\bar{w}_{k})
- \mathcal{J}(\bar{m},\bar{w})
\big)
- c \sigma_k \delta_k \notag \\
\leq {} &
\varepsilon_k - c \delta_{\min} \sigma_k \leq (1- c\delta_{\min}) \varepsilon_k. \label{eq:cv_qag_lin}
\end{align}
The last inequality was obtained with Lemma \ref{eq:seq_gamma}.

\emph{Case of exploitability-based stepsizes}.
Let us set $a_k= \big(C_1 D_k^{(1)} + C_2 D_k^{(2)} \big)$. By definition, $\delta_k= \min \big( 1, \frac{\sigma_k}{2a_k} \big)$.
Assume that $\sigma_k \geq 2a_k$, i.e.\@ $a_k \leq \sigma_k/2$. Then $\delta_k=1$. Inequality \eqref{eq:upper_bound1234} in Lemma \ref{lemma:upper_bound} and Lemma \ref{eq:seq_gamma} yield
\begin{equation*}
\varepsilon_{k+1} \leq \varepsilon_k - \frac{\sigma_k}{2} \leq \frac{1}{2} \varepsilon_k.
\end{equation*}
Now assume that $\sigma_k < 2a_k$. Then $\delta_k= \frac{\sigma_k}{2a_k}$. By \eqref{eq:upper_bound1234}, $
\varepsilon_{k+1} \leq
\varepsilon_k - \frac{\sigma_k^2}{4 a_k}. 
$
It follows from the last estimate of Theorem \ref{theo:variables} that $a_k \leq C \sigma_k$. Therefore, by Lemma \ref{eq:seq_gamma},
\begin{equation*}
\varepsilon_{k+1} \leq
\varepsilon_k - \frac{\sigma_k}{4C} \leq \Big( 1- \frac{1}{4C} \Big) \varepsilon_k, 
\end{equation*}
as was to be proved.

\emph{Case of an optimal stepsize.} If $\delta_k$ minimizes $\mathcal{J}(\bar{m}_k^\delta,\bar{w}_k^\delta)$, then \eqref{eq:cv_qag_lin} is necessarily satisfied. This concludes the proof of the theorem.

\section{Conclusion}
\label{sec:conclusion}

The connection between the GCG method and fictitious play investigated in this article is not specific to second-order MFGs and could be established in different settings. 
Before discussing some possible extensions of our work, let us make some general comments.
\begin{itemize}
\item While we have focused here on a class of MFGs with a convex potential formulation, the case of nonconvex potential MFGs is also of interest. An example of interactions with a nonconvex variational structure, arising from a consensus model, is given in  \cite{santambrogio2021cucker}. In this situation the GCG algorithm might not converge. In \cite{hadikhanloo2017learning}, the authors only show that any convergent sub-sequence generated by the fictitious play converges to an equilibrium. This problem can be tackled, assuming that solutions are stable \cite{briani2018stable}. Also very few results are available concerning the conditional gradient in the nonconvex setting. The article \cite{lacoste2016convergence} shows that in the case of nonconvex optimization problems, the Frank-Wolfe algorithm converges to stationary points, when suitable stepsizes are utilized. The notion of stationarity involved in \cite{lacoste2016convergence} should lead to the MFG system associated with the nonconvex potential formulation.
\item The very first assumption to be satisfied in the analysis of the Frank-Wolfe algorithm is the Lipschitz continuity of the gradient of the cost function. In the case of MFGs, this means that the coupling functions should Lipschitz-continuous in suitable functional spaces. As a consequence, the analysis of the GCG method for potential MFGs with nonsmooth coupling functions (for example, MFGs with a local congestion term) may be particularly difficult.
\item In general, the GCG method only has a sublinear rate of convergence. The linear rate of convergence obtained in this article heavily relies on the specific stability analysis which was done in Section \ref{sec:stability} for optimal control problems.
\end{itemize}
A first natural extension of our work concerns the case of an unbounded domain. We expect that the linear convergence can be achieved. At a technical level, one difficulty concerns the boundedness of the distribution $m_k$. While we have established it with the help of parabolic estimates, we could follow the methodology of \cite{cardaliaguet2018mean} to address this more general case.
We also think that linear convergence of the GCG method can be established for fully discrete MFGs, as formulated in \cite{bonnans2021discrete}, taking Lipschitz-continuous coupling functions and a strongly convex running cost.
Finally, we mention the case of first-order MFGs, in a Lagrangian formulation (as formulated in \cite{cardaliaguet2017learning}, for example): for this case, we only expect a sublinear rate of convergence for the GCG method.

Finally, let us mention that the fictitious play algorithm is quite similar to the policy iteration method proposed and analyzed in \cite{cacace2021policy} and \cite{camilli2022rates} for MFGs, since this method also relies on iterative resolutions of the HJB and the Fokker-Planck equation. The analysis techniques of our article may bring new insights to the policy iteration method.

\bibliographystyle{plain}
\bibliography{biblio}

\section*{Statement and Declarations}

This work was supported by a public grant as part of the
Investissement d'avenir project, reference ANR-11-LABX-0056-LMH,
LabEx LMH. The authors have no relevant financial or non-financial interests to disclose.

\appendix

\section{Appendix: Regularity of the auxiliary mappings}
\label{sec:mappings}

This appendix contains the proofs of the technical lemmas of Subsection \ref{subsec:mappings_statement}.

\subsection{Parabolic estimates}

In this section we provide estimates for the following parabolic equation:
\begin{equation}
  \label{eq:parabolic}
\begin{array}{rlr}
\partial_t u - \sigma \Delta u + \langle b, \nabla u \rangle+ c u \, = & \!\!\! h, \quad & (x,t) \in Q, \\
u(x,0) \,= & \!\!\! u_0(x), & x \in \mathbb{T}^d,
\end{array}
\end{equation}
for different assumptions on $b \colon Q \rightarrow \R^d$, $c \colon Q \rightarrow \R$, $h \colon \rightarrow \R$, and $u_0 \colon \mathbb{T}^d \rightarrow \R$. The proofs of the following results can be found in the Appendix of \cite{BHP-schauder}; they largely rely on \cite{LSU}. We recall that $q$ is a fixed parameter and $q>d+2$. 

In the next theorem, we consider the Sobolev space $W^{2-2/p,p}(\mathbb{T}^d)$ with a fractional order of derivation, see \cite[section II.2]{LSU} for a definition.

\begin{theorem}
\label{theo:max_reg1}
 For all $R>0$, there exists $C>0$ such that for all $u_0 \in W^{2-2/q,q}(\mathbb{T}^d)$, for all $b \in L^q(Q;\R^d)$, for all $c \in L^q(Q)$, and for all $h \in L^q(Q)$ satisfying
\begin{equation*}
\| u_0 \|_{W^{2-2/q,q}(\mathbb{T}^d)} +
\| b \|_{L^q(Q;\R^d)} +
\| c \|_{L^q(Q)} +
\| h \|_{L^q(Q)} \leq R,
\end{equation*}
equation \eqref{eq:parabolic} has a unique solution $u$ in $W^{2,1,q}(Q)$. Moreover, $\| u \|_{W^{2,1,q}(Q)} \leq C$.
\end{theorem}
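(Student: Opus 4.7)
The plan is to reduce the statement to maximal $L^q$-regularity for the pure heat semigroup on the torus and then treat the first- and zeroth-order terms as a perturbation, exploiting the embedding of $W^{2,1,q}(Q)$ into $C^{\delta,\delta/2}(Q)$ that is available thanks to the assumption $q > d+2$. First I would recall, from \cite{LSU} (Chapter IV), that for $b \equiv 0$ and $c \equiv 0$ the Cauchy problem \eqref{eq:parabolic} admits a unique solution $u \in W^{2,1,q}(Q)$, together with the linear estimate
\[
\| u \|_{W^{2,1,q}(Q)} \leq C_0 \big( \| h \|_{L^q(Q)} + \| u_0 \|_{W^{2-2/q,q}(\mathbb{T}^d)} \big),
\]
for a constant $C_0$ depending only on $\sigma$, $q$, $d$, and $T$. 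This is the backbone of the argument.

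Next, I would introduce the affine map $\mathcal{T}$ sending $v \in W^{2,1,q}(Q)$ to the solution $u$ of the heat equation with data $u_0$ and right-hand side $h - \langle b, \nabla v\rangle - cv$. Combining the maximal regularity bound above with Lemma \ref{lemma:max_reg_embedding}, which yields $\|v\|_{C^\delta(Q)} + \|\nabla v\|_{C^\delta(Q;\R^d)} \leq C_1 \|v\|_{W^{2,1,q}(Q)}$, one obtains
\[
\| \mathcal{T} v_1 - \mathcal{T} v_2 \|_{W^{2,1,q}(Q)} \leq C_0 C_1 \big( \| b \|_{L^q(Q;\R^d)} + \| c \|_{L^q(Q)} \big) \| v_1 - v_2 \|_{L^\infty(Q)} .
\]
This bound is not yet a contraction for arbitrary $R$, which is the crux of the difficulty.

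The main obstacle is closing the estimate when $R$ is large, since the $L^q$-norms of $b$ and $c$ are not small. The remedy I would use is time-localization. On a short slab $[t_0, t_0+\tau]$ with $\tau > 0$, Sobolev embedding combined with the trace $u(\cdot,t_0)$ yields a refined bound of the form
\[
\| v \|_{L^\infty([t_0,t_0+\tau] \times \mathbb{T}^d)} + \| \nabla v \|_{L^\infty([t_0,t_0+\tau] \times \mathbb{T}^d;\R^d)} \leq \eta(\tau) \| v \|_{W^{2,1,q}} + C \| v(\cdot,t_0) \|_{W^{2-2/q,q}(\mathbb{T}^d)},
\]
where $\eta(\tau) \to 0$ as $\tau \to 0$, obtained by applying the embedding to $v - e^{\tau \sigma \Delta} v(\cdot,t_0)$ and controlling the residual. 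Choosing $\tau = \tau(R)$ so that $C_0 C_1 R \, \eta(\tau) \leq 1/2$, the operator $\mathcal{T}$ becomes a contraction on $W^{2,1,q}([t_0,t_0+\tau] \times \mathbb{T}^d)$, yielding existence and uniqueness on that slab together with the estimate $\| u \|_{W^{2,1,q}} \leq C(R)( \|u(\cdot,t_0)\|_{W^{2-2/q,q}(\mathbb{T}^d)} + \| h \|_{L^q})$.

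Finally, I would iterate across the slabs $[k\tau, (k+1)\tau]$ for $k=0,1,\ldots, \lceil T/\tau \rceil$. At each step, the initial datum on the next slab is the trace $u(\cdot,(k+1)\tau)$, whose $W^{2-2/q,q}(\mathbb{T}^d)$-norm is controlled by $\| u \|_{W^{2,1,q}}$ on the previous slab by the standard trace inequality for $W^{2,1,q}$-functions. Since the number of slabs depends only on $R$ and $T$, a straightforward induction yields the global estimate $\| u \|_{W^{2,1,q}(Q)} \leq C$ with $C$ depending only on $R$, which is the claim. Uniqueness on $[0,T]$ follows from uniqueness on each slab.
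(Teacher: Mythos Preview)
The paper does not prove this theorem: it simply refers to the Appendix of \cite{BHP-schauder} and to \cite{LSU}. So there is no ``paper's proof'' to compare against, only the cited references.

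Your strategy --- maximal $L^q$-regularity for the heat semigroup, then a contraction argument with the lower-order terms treated as a perturbation, closed by time-localization and slab iteration --- is one of the standard routes to this result and is sound in outline. Two points deserve tightening. First, the displayed Lipschitz bound for $\mathcal{T}$ should carry $\|\nabla(v_1-v_2)\|_{L^\infty}$ (from the $b\cdot\nabla v$ term) in addition to $\|v_1-v_2\|_{L^\infty}$; as written it only controls the zeroth-order contribution. Second, the heart of the argument is your claim that $\eta(\tau)\to 0$. For this you need the embedding constant from $W^{2,1,q}$ on the short slab into $C^\delta$ (for the function and its gradient) to be \emph{uniform} in $\tau$, at least for functions with vanishing initial trace. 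This is true --- one can argue by odd reflection in time across $t=t_0$, or by extension to a fixed reference cylinder --- and once you have it, the zero initial condition combined with the $C^\delta$ bound of Lemma~\ref{lemma:max_reg_embedding} gives $\|\tilde v\|_{L^\infty}+\|\nabla\tilde v\|_{L^\infty}\leq C\tau^{\delta/2}\|\tilde v\|_{W^{2,1,q}}$ on the slab, which is exactly the smallness you need. But the uniformity is not automatic and should be stated explicitly; without it the slab-by-slab contraction does not close. The remaining iteration and trace arguments are routine.
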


\begin{theorem} \label{thm:bound-u-u0-h}
There exists $C>0$ such that for all $u_0 \in W^{2-2/q,q}(\mathbb{T}^d)$ and for all $h \in L^q(Q)$, the unique solution $u$ to \eqref{eq:parabolic} (with $b = 0$ and $c=0$) satisfies the following estimate:
\begin{equation} \nonumber
\|u\|_{W^{2,1,q}(Q)} \leq C \big( \|u_0\|_{W^{2-2/q,q}(\mathbb{T}^d)} + \|h\|_{L^q(Q)} \big). 
\end{equation}
\end{theorem}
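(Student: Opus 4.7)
The plan is to deduce this linear a priori bound from Theorem \ref{theo:max_reg1} (applied with $b=0$ and $c=0$) by exploiting the linearity of the equation. Theorem \ref{theo:max_reg1} already yields existence and uniqueness of $u \in W^{2,1,q}(Q)$, and provides, for every $R>0$, a constant $C_R$ such that data of norm at most $R$ produce a solution of $W^{2,1,q}(Q)$-norm at most $C_R$. The missing piece is only that the constant depends linearly on the norm of the data, which is exactly what linearity of the problem gives for free.

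More concretely, I would proceed as follows. Apply Theorem \ref{theo:max_reg1} with $R=1$ and denote by $C$ the resulting constant. Given arbitrary data $(u_0,h)\in W^{2-2/q,q}(\mathbb{T}^d)\times L^q(Q)$, set
\begin{equation*}
M = \|u_0\|_{W^{2-2/q,q}(\mathbb{T}^d)} + \|h\|_{L^q(Q)}.
\end{equation*}
If $M=0$, then by uniqueness in Theorem \ref{theo:max_reg1} the solution is $u\equiv 0$ and the estimate holds trivially. Otherwise, consider the rescaled data $(\tilde u_0,\tilde h) := (u_0/M, h/M)$, which satisfies $\|\tilde u_0\|_{W^{2-2/q,q}(\mathbb{T}^d)} + \|\tilde h\|_{L^q(Q)} \leq 1$. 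Theorem \ref{theo:max_reg1} with $b=c=0$ and $R=1$ yields a unique solution $\tilde u\in W^{2,1,q}(Q)$ with $\|\tilde u\|_{W^{2,1,q}(Q)}\leq C$. Because the equation with $b=c=0$ is linear, the function $M\tilde u$ solves the problem with data $(u_0,h)$; by uniqueness $u = M\tilde u$, and therefore
\begin{equation*}
\|u\|_{W^{2,1,q}(Q)} = M\,\|\tilde u\|_{W^{2,1,q}(Q)} \leq C M,
\end{equation*}
which is exactly the claimed inequality.

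There is essentially no obstacle, since all the analytic work (the maximal regularity of the heat operator on $\mathbb{T}^d$, the embedding of traces in $W^{2-2/q,q}$, uniqueness of the weak solution) has already been packaged into Theorem \ref{theo:max_reg1}. The only thing one has to be a bit careful about is the edge case $M=0$, which is handled by the uniqueness statement. This reduction from the uniform-in-$R$ bound to a linear bound via scaling is standard and appears in the same way in \cite{LSU} and in the appendix of \cite{BHP-schauder}, which the authors cite for the proofs of the estimates in this subsection.
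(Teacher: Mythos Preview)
Your argument is correct: the scaling trick exploiting the linearity of the heat equation (with $b=c=0$) cleanly upgrades the $R$-dependent bound of Theorem \ref{theo:max_reg1} to a linear estimate in the data, and the uniqueness handles the degenerate case $M=0$.

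As for comparison: the paper does not actually prove this statement. It simply records it among the parabolic estimates of the appendix and refers the reader to the appendix of \cite{BHP-schauder} and to \cite{LSU} for all proofs in that subsection. Your derivation is thus not competing with a specific argument in the paper; rather, it supplies a self-contained reduction to the previous theorem, avoiding a direct appeal to the maximal regularity machinery in \cite{LSU}. This is a perfectly acceptable route and arguably preferable in this context, since it makes the dependence on Theorem \ref{theo:max_reg1} explicit.
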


\begin{theorem}
\label{theo:holder_reg_classical}
For all $\beta \in (0,1)$, for all $R>0$, there exist $\alpha \in (0,1)$ and $C>0$ such that for all $u_0 \in \mathcal{C}^{2+ \beta}(\mathbb{T}^d)$, $b \in \mathcal{C}^{\beta,\beta/2}(Q;\R^d)$, $c \in \mathcal{C}^{\beta,\beta/2}(Q)$ and $h \in \mathcal{C}^{\beta,\beta/2}(Q)$ satisfying
$\| u_0 \|_{\mathcal{C}^{2+ \beta}(\mathbb{T}^d)} +
\| b \|_{\mathcal{C}^{\beta,\beta/2}(Q;\R^d)} +
\| c \|_{\mathcal{C}^{\beta,\beta/2}(Q)} +
\| h \|_{\mathcal{C}^{\beta,\beta/2}(Q)} \leq R$,
the solution to \eqref{eq:parabolic} lies in $\mathcal{C}^{2+\alpha,1+\alpha/2}(Q)$ and satisfies
$\| u \|_{\mathcal{C}^{2+\alpha,1+\alpha/2}(Q)} \leq C$.
\end{theorem}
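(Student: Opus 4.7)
The plan is to prove Theorem \ref{theo:holder_reg_classical} in three stages: first obtain a solution in $W^{2,1,q}(Q)$ via Theorem \ref{theo:max_reg1}, then upgrade to Hölder regularity of $u$ and $\nabla u$, and finally bootstrap using the classical parabolic Schauder estimate for the heat operator (as found in \cite{LSU}). The key observation is that $\mathcal{C}^{\beta,\beta/2}(Q)$ embeds into $L^q(Q)$ since $Q$ is bounded, and $\mathcal{C}^{2+\beta}(\mathbb{T}^d)$ embeds into $W^{2-2/q,q}(\mathbb{T}^d)$.

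First, I would apply Theorem \ref{theo:max_reg1}: the embeddings just mentioned and the hypothesis that the $\mathcal{C}^{\beta,\beta/2}$-norms of $b, c, h$ and the $\mathcal{C}^{2+\beta}$-norm of $u_0$ are bounded by $R$ provide a uniform bound on the corresponding $L^q$ and $W^{2-2/q,q}$-norms, say by some $R' = R'(R)$. Theorem \ref{theo:max_reg1} then yields a unique solution $u \in W^{2,1,q}(Q)$ with $\|u\|_{W^{2,1,q}(Q)} \leq C(R)$. Next, applying Lemma \ref{lemma:max_reg_embedding}, there exists $\delta \in (0,1)$ (depending only on $q$ and $d$, not on $R$) such that $u, \nabla u \in \mathcal{C}^\delta(Q)$, with Hölder norms controlled by $C(R)$.

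In the third stage, I would rewrite the equation as
\begin{equation*}
\partial_t u - \sigma \Delta u = \tilde{h}, \qquad \tilde{h} := h - \langle b, \nabla u\rangle - c u, \qquad u(\cdot,0)=u_0.
\end{equation*}
Setting $\gamma = \min(\beta,\delta) \in (0,1)$, the product and composition properties of Hölder spaces combined with the uniform bounds on $b, c, h, u, \nabla u$ give $\tilde{h} \in \mathcal{C}^{\gamma, \gamma/2}(Q)$ with $\|\tilde{h}\|_{\mathcal{C}^{\gamma,\gamma/2}(Q)} \leq C(R)$. Since $u_0 \in \mathcal{C}^{2+\beta}(\mathbb{T}^d) \subset \mathcal{C}^{2+\gamma}(\mathbb{T}^d)$ uniformly, the classical Schauder estimate for the heat operator on the torus (see \cite[Theorem IV.5.1]{LSU}) yields $u \in \mathcal{C}^{2+\gamma,1+\gamma/2}(Q)$ with norm bounded by $C(R)$. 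One then takes $\alpha = \gamma$.

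I expect the genuine difficulty to lie in justifying Step 2, namely that the $\mathcal{C}^{\gamma,\gamma/2}$-norm of $\tilde{h}$ is controlled by a function of $R$ only: one must verify that multiplication by a $\mathcal{C}^{\beta,\beta/2}$-function preserves $\mathcal{C}^{\gamma,\gamma/2}$-regularity with quantitative estimates, and that there is no circular dependence on the unknown $\alpha$. The other subtle point is that the exponent $\delta$ coming from the maximal regularity embedding depends only on $q$ and $d$, so that $\alpha$ can indeed be chosen uniformly in $R$. Once these points are settled, the estimate follows by directly invoking the cited LSU Schauder bound. Since the result is standard and of the same flavor as estimates already used in \cite{BHP-schauder}, a one-sentence proof citing \cite{LSU} together with Theorem \ref{theo:max_reg1} and Lemma \ref{lemma:max_reg_embedding} would suffice in the appendix.
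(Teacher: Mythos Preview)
The paper does not actually prove this theorem in the body of the text: immediately before Theorems~\ref{theo:max_reg1}, \ref{thm:bound-u-u0-h}, and \ref{theo:holder_reg_classical} it states that ``the proofs of the following results can be found in the Appendix of \cite{BHP-schauder}; they largely rely on \cite{LSU}.'' So your final remark is exactly right---in the paper, this statement is treated as a black box imported from classical Schauder theory.

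Your bootstrap argument (first $W^{2,1,q}$ via Theorem~\ref{theo:max_reg1}, then $\mathcal{C}^\delta$ for $u,\nabla u$ via Lemma~\ref{lemma:max_reg_embedding}, then Schauder for the heat operator applied to $\partial_t u - \sigma\Delta u = \tilde h$) is correct and the concerns you flag are genuine but easily handled: multiplication of $\mathcal{C}^{\beta,\beta/2}$ and $\mathcal{C}^{\delta,\delta/2}$ functions lands in $\mathcal{C}^{\gamma,\gamma/2}$ with $\gamma=\min(\beta,\delta)$, and $\delta$ from Lemma~\ref{lemma:max_reg_embedding} depends only on $q,d$, so $\alpha=\gamma$ is uniform. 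One small point: Lemma~\ref{lemma:max_reg_embedding} gives $\nabla u \in \mathcal{C}^\delta(Q)$ for the isotropic H\"older norm, which embeds into $\mathcal{C}^{\delta,\delta/2}(Q)$ (since $\delta$-H\"older in $t$ implies $\delta/2$-H\"older in $t$), so the product estimate goes through.

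That said, a shorter route is available: the classical parabolic Schauder estimate in \cite{LSU} (Theorem~IV.5.1) applies \emph{directly} to the full operator $\partial_t - \sigma\Delta + \langle b,\nabla\cdot\rangle + c$ with $b,c \in \mathcal{C}^{\beta,\beta/2}$, yielding $u \in \mathcal{C}^{2+\beta,1+\beta/2}(Q)$ with norm controlled by a constant depending on the H\"older norms of $b,c$ (hence on $R$) times $\|u_0\|_{\mathcal{C}^{2+\beta}} + \|h\|_{\mathcal{C}^{\beta,\beta/2}}$. This gives the result in one step with $\alpha=\beta$, without passing through $W^{2,1,q}$. Your detour through maximal $L^q$-regularity is not wrong, but it is unnecessary here and produces a possibly smaller exponent $\alpha=\min(\beta,\delta)$.
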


\subsection{Fokker-Planck equation} \label{subsec:43}

\begin{proof}[Proof of Lemma \ref{lemma:fp}] \label{proof:fp}
Let us write the Fokker-Planck equation in the form of equation \eqref{eq:parabolic}: $\partial_t m - \Delta m + (\nabla \cdot v) m + \langle v, \nabla m \rangle = 0$.
The first part of lemma follows from Theorem \ref{theo:max_reg1}. The nonnegativity of $\fpmap[v]$ is proved in \cite[Lemma 3]{BHP-schauder}.
\end{proof}

\begin{proof}[Proof of Lemma \ref{lemma:fp2}] \label{proof:fp2}
Set $w= v_2-v_1$ and $\mu= m_2-m_1$. Then $\mu$ is the solution to
\begin{equation} \nonumber
\begin{array}{rlr}
\partial_t \mu  - \Delta \mu  + \nabla \cdot ( v_1 \mu ) \, = & \!\!\! -  \nabla \cdot (w  m_2), \quad & (x,t) \in Q, \\
\mu(x,0) \, = &\!\!\! 0, & x \in \mathbb{T}^d.
\end{array}
\end{equation}
Set $V=W^{1,2}(\mathbb{T}^d)$ and consider the Gelfand triple $(V,L^2(\mathbb{T}^d),V^*)$, where $V^*$ denotes the dual of $V$.
Then $\mu$ is solution of a parabolic equation of the form
\begin{equation} \nonumber
\begin{array}{rlr}
 \partial_t  m(t) + B(t) m(t) \, = & \!\!\! f(t), \quad & (x,t) \in Q, \\
m(x,0) \, = & \!\!\! 0, & x \in \mathbb{T}^d,
\end{array}
\end{equation}
where $B(t) \in L(V, V^{*})$ and $f(t) \in V^{*}$. For any $m \in V$, we have
\begin{align*} \nonumber
\langle B(t) m, m \rangle_{V} & = \int_{\mathbb{T}^d} \big( - \Delta m + \nabla \cdot v_1(t) m + \langle v_1(t), \nabla m \rangle \big) m \, \dd x \\
& = \int_{\mathbb{T}^d} \big( |\nabla m|^2 - \langle v_1(t), \nabla m \rangle m \big) \dd x,
\end{align*}
where the second equality is obtained by integration by parts.
Using Cauchy-Schwarz inequality and $\| v_1 \|_{L^\infty(Q;\R^d)} \leq R$, we obtain the following inequality:
\begin{equation*}
\langle B(t) m, m \rangle_{V}
\geq \| \nabla m \|_{L^2(\mathbb{T}^d;\R^d)}^2
- C \| \nabla m \|_{L^2(\mathbb{T}^d;\R^d)} \| m \|_{L^2(\mathbb{T}^d)}
\end{equation*}
where the constant $C$ is independent of $t$ (but depends on $R$).
A direct application of Young's inequality yields the existence of $C$ (depending on $R$) such that
\begin{equation} \nonumber
\langle B(t) m, m \rangle_{V} \geq \frac{1}{2} \|m \|^2_{V} - C\|m \|^2_{L^2(\mathbb{T}^d)}. 
\end{equation}
Thus $B(t)$ is semi-coercive, uniformly in time. With similar techniques, one can show that
$\langle B(t)m,m' \rangle_V \leq C \| m \|_V \| m' \|_V$,
for a.e.\@ $t \in (0,T)$ and for all $m$ and $m'$ in $V$.
We can apply \cite[Chapter 3, Theorems 1.1 and 1.2]{lions1971optimal}, from which we derive
\begin{align*}
\| \mu \|_{L^\infty(0,T;L^2(\mathbb{T}^d))}
& \leq C \big( \| \mu \|_{L^{2}(0,T;V)} + \|\partial_t  \mu \|_{L^{2}(0,T;V^{*})} \big) \\
& \leq C \| f \|_{L^2(0,T;V^*)}
 \leq C \| \nabla \cdot (w m_2) \|_{L^{2}(0,T;V^{*})} \\
&  \leq C \| w m_2 \|_{L^{2}(Q;\mathbb{R}^d)}.
\end{align*}
Finally, since $\| m_2 \|_{L^\infty(Q)} \leq R$, we have
$\| w m_2 \|_{L^2(Q;\R^d)}^2
\leq C \int_Q |w|^2 m_2 \, \dd x \, \dd t.
$
Combining the two last obtained inequalities, we obtain the announced result.
\end{proof}

\subsection{HJB equation} \label{subsec:44}

\begin{lemma} \label{lemma:reg_H}
The Hamiltonian $H$ is differentiable with respect to $p$ and $H_p$ is differentiable with respect to $x$ and $p$. Moreover, $H$, $H_p$, $H_{px}$, and $H_{pp}$ are locally H{\"o}lder-continuous.
\end{lemma}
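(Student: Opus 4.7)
The plan is to identify $H$ as (minus) the Legendre--Fenchel transform of $L$ with respect to $v$ and exploit the strong convexity and smoothness of $L$ granted by assumptions \hypA{}, \hypC{}, \hypD{} to transfer the regularity to $H$ via the envelope theorem and the implicit function theorem.

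First I would show that for each $(x,t,p)$ the supremum in the definition of $H$ is attained at a unique point $v^\star = v^\star(x,t,p)$, characterized by the first-order condition
\begin{equation*}
D_v L(x,t,v^\star) + p = 0.
\end{equation*}
Existence and uniqueness follow from \hypA{} (strong convexity with modulus $1/C_0$) combined with the upper bound in \hypC{}, which yields the required coercivity. Next I would establish that $v^\star$ is locally bounded: testing the strong monotonicity of $D_v L(x,t,\cdot)$ against $v^\star - 0$ and using that $D_v L(\cdot,\cdot,0)$ is bounded on $Q$ (by continuity of $D_v L$ from \hypD{} and compactness of $Q$), one obtains an estimate of the form $|v^\star| \leq C_0(|p| + \|D_v L(\cdot,\cdot,0)\|_\infty)$.

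I would then prove local H\"older continuity of $v^\star$ directly from its characterization. For two triples $(x_i,t_i,p_i)$ with corresponding minimizers $v_i^\star$, writing
\begin{equation*}
D_v L(x_1,t_1,v_1^\star) - D_v L(x_1,t_1,v_2^\star) = (p_2 - p_1) + \bigl(D_v L(x_2,t_2,v_2^\star) - D_v L(x_1,t_1,v_2^\star)\bigr),
\end{equation*}
taking the inner product with $v_1^\star - v_2^\star$, invoking the strong monotonicity of $D_v L(x_1,t_1,\cdot)$ on the left and the H\"older regularity of $D_v L$ in $(x,t)$ from \hypD{} on the right yields
\begin{equation*}
\frac{1}{C_0}|v_1^\star - v_2^\star| \leq |p_2-p_1| + C\bigl(|x_2-x_1|^{\beta} + |t_2-t_1|^{\beta}\bigr)
\end{equation*}
on bounded sets, giving local H\"older continuity of $v^\star$, and then of $H$ itself via the formula $H(x,t,p) = -\langle p, v^\star\rangle - L(x,t,v^\star)$.

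The envelope theorem (or direct differentiation of the above formula, using the first-order condition to cancel the terms in $D_v v^\star$) gives $H_p(x,t,p) = -v^\star(x,t,p)$. To pass to $H_{px}$ and $H_{pp}$, I would apply the implicit function theorem to the equation $D_v L(x,t,v) + p = 0$: its Jacobian with respect to $v$ is $D_{vv} L(x,t,v^\star)$, which is invertible with inverse bounded by $C_0$ thanks to \hypA{}. This yields the explicit identities
\begin{equation*}
D_p v^\star = -\bigl(D_{vv} L(x,t,v^\star)\bigr)^{-1}, \qquad D_x v^\star = -\bigl(D_{vv} L(x,t,v^\star)\bigr)^{-1} D_{vx} L(x,t,v^\star),
\end{equation*}
hence $H_{pp} = -D_p v^\star$ and $H_{px} = -D_x v^\star$. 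Local H\"older continuity of these expressions follows by composition: $v^\star$ is locally H\"older (previous step), $D_{vv} L$ and $D_{vx} L$ are H\"older on bounded sets by \hypD{}, and matrix inversion is locally Lipschitz on the set of matrices with eigenvalues bounded below by $1/C_0$. The main delicate point is the local H\"older estimate on $v^\star$; once it is in hand, every remaining regularity statement is a routine composition argument.
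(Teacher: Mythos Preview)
Your argument is correct and is precisely the standard route to these conclusions: unique maximizer via strong convexity, $H_p=-v^\star$ via the envelope relation, and the formulas for $H_{pp}$ and $H_{px}$ via the implicit function theorem applied to $D_vL(x,t,v)+p=0$, with H\"older regularity propagated by composition. The paper itself does not spell out a proof but simply refers to \cite[Lemma~1]{BHP-schauder}; your write-up is essentially what one finds when unpacking that reference, so there is no genuine methodological difference to discuss.
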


\begin{proof}
See \cite[Lemma 1]{BHP-schauder}.
\end{proof}

The analysis of the HJB equation relies on its connection with the value function of an optimal control problem, that was introduced in \eqref{mapping:u}.
This connection allows first to show a uniform bound for $\umap[\gamma,P]$.

\begin{lemma} \label{lem:u-L-infty}
Let $R> 0$ and let $(\gamma,P) \in \ballR$. There exists a constant $C(R) > 0$ such that $\| \umap[\gamma,P] \|_{L^\infty(Q)} \leq C(R)$ and such that $u$ is $C(R)$-Lipschitz continuous with respect to $x$.
Moreover, for any $(x,t) \in Q$,
\begin{equation} \label{eq:u-bounded-controls}
\umap[\gamma,P](x,t) = \inf_{\nu \in \mathbb{L}_{\mathbb{F}}^{2,C(R)}(t,T)} J[\gamma,P](x,t,\nu).
\end{equation}
In the above relation, $\mathbb{L}_{\mathbb{F}}^{2,C(R)}(t,T)$ denotes the set of stochastic processes $\nu \in \mathbb{L}_{\mathbb{F}}^{2}(t,T)$ such that $\mathbb{E} \big[ \int_t^T |\nu_s|^2 \dd s \big] \leq C(R)$.
\end{lemma}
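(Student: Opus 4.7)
The plan is to leverage the stochastic control representation \eqref{mapping:u} and verify all three statements---the $L^\infty$ bound, the restriction to bounded controls, and the Lipschitz regularity in $x$---with a unified set of estimates coming from (H1)--(H4) and the definition of $\ballR$.

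First, for the upper bound on $\umap[\gamma,P](x,t)$, I would insert the trivial control $\nu\equiv 0$: the state equation becomes $X_s=x+\sqrt 2\,(B_s-B_t)$, and by (H3), by $|\gamma|\leq R$, and by the boundedness of $g$ on the compact torus, one directly obtains $J[\gamma,P](x,t,0)\leq C(R)$, hence $\umap[\gamma,P](x,t)\leq C(R)$.

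The lower bound and the restriction \eqref{eq:u-bounded-controls} come from a coercivity argument. From (H1),
$L(x,t,v)\geq L(x,t,0)+\langle D_v L(x,t,0),v\rangle+\tfrac{1}{2C_0}|v|^2$, and (H4) together with the compactness of $Q$ ensures that $L(\cdot,\cdot,0)$ and $D_vL(\cdot,\cdot,0)$ are uniformly bounded on $Q$. Young's inequality then yields a quadratic lower bound of the form $L(x,t,v)\geq \tfrac{1}{4C_0}|v|^2 - C$. Since $\|A^\star P\|_\infty\leq \|a\|_\infty R$, a second use of Young's inequality absorbs the cross term $\langle A^\star P(X_s,s),\nu_s\rangle$ into $\tfrac{1}{8C_0}|\nu_s|^2 + C(R)$, so that
\begin{equation*}
J[\gamma,P](x,t,\nu) \geq \tfrac{1}{8C_0}\,\mathbb{E}\Big[\int_t^T|\nu_s|^2\,\dd s\Big] - C(R).
\end{equation*}
This delivers the lower bound $\umap[\gamma,P]\geq -C(R)$; and, combined with the upper bound, any $\nu$ not outperformed by the trivial control must satisfy $\mathbb{E}\big[\int_t^T|\nu_s|^2\,\dd s\big]\leq C(R)$, which justifies \eqref{eq:u-bounded-controls}.

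For the Lipschitz estimate, the decisive feature is that the SDE $\dd X_s=\nu_s\,\dd s+\sqrt 2\,\dd B_s$ has no state-dependent drift or diffusion, so two trajectories started at $x$ and $x'$ with the same $\nu$ and the same Brownian path satisfy $X^{x}_s-X^{x'}_s=x-x'$ pathwise. Fixing $\nu\in \mathbb{L}_{\mathbb{F}}^{2,C(R)}(t,T)$ and comparing $J[\gamma,P](x,t,\nu)$ and $J[\gamma,P](x',t,\nu)$ term by term, I would use (H2) to bound $|L(X^x_s,\nu_s)-L(X^{x'}_s,\nu_s)|\leq C_0|x-x'|(1+|\nu_s|^2)$, the $x$-Lipschitz continuity of $a$ (from (H4)) combined with $|P|\leq R$ to control the $A^\star P$-difference by $C(R)|x-x'||\nu_s|$, the bound $\|\nabla_x\gamma\|_\infty\leq R$ (from $(\gamma,P)\in\ballR$), and the Lipschitz continuity of $g\in \mathcal{C}^{2+\alpha_0}(\mathbb{T}^d)$. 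Invoking the $L^2$-bound on $\nu$ gives $|J[\gamma,P](x,t,\nu)-J[\gamma,P](x',t,\nu)|\leq C(R)|x-x'|$, and passing to the infimum---now legitimately taken over $\mathbb{L}_{\mathbb{F}}^{2,C(R)}(t,T)$---yields the desired $C(R)$-Lipschitz continuity of $\umap[\gamma,P]$ in $x$.

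The main obstacle is essentially bookkeeping: one must verify that all the constants genuinely depend only on $R$ and on the fixed data, which amounts to checking that $L(\cdot,\cdot,0)$, $D_vL(\cdot,\cdot,0)$, and the $x$-Lipschitz constant of $a$ are uniform on $Q$---precisely what (H4) and the compactness of $Q$ guarantee. Once this is in place, the restriction to $\mathbb{L}_{\mathbb{F}}^{2,C(R)}$ provides the uniform $L^2$-control on admissible $\nu$ that is required to close the Lipschitz-in-$x$ computation, and the three conclusions follow from the same elementary control-theoretic comparison.
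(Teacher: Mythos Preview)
Your proposal is correct and, for the $L^\infty$ bound and the restriction \eqref{eq:u-bounded-controls}, follows essentially the same route as the paper: a quadratic lower bound on $L$ from \hypA{} and \hypD{}, absorption of the $A^\star P$ cross term via Young's inequality, the upper bound from the trivial control $\nu\equiv 0$, and then the observation that any $\varepsilon$-optimal control must satisfy the $L^2$ bound.

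The one point worth noting is the Lipschitz-in-$x$ claim: the paper's own proof of this lemma actually does not address it at all (the argument stops after establishing \eqref{eq:u-bounded-controls}); the gradient bound is only invoked later, in the proof of Proposition~\ref{prop:hjb_hol}, by deferring to \cite[Proposition~1, Step~2]{BHP-schauder}. Your argument---using that the drift is state-independent so that $X^x_s-X^{x'}_s\equiv x-x'$, then exploiting \hypB{} for $L$, the $x$-Lipschitz continuity of $a$ from \hypD{}, the bound $\|\nabla_x\gamma\|_\infty\leq R$ built into $\ballR$, and finally the restriction to $\mathbb{L}_{\mathbb{F}}^{2,C(R)}$ to control $\mathbb{E}\int|\nu_s|^2\,\dd s$---is a clean and self-contained way to fill this gap, and it is more explicit than what the paper provides.
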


\begin{proof}
We first derive a lower bound of $L$.
By assumption \hypD{},
$L(x,t,0)$ and $L_v(x,t,0)$ are bounded. It follows then from the
strong convexity assumption (Assumption \hypA{})
that there exists a constant $C> 0$ such that
\begin{equation} \label{L_quad_growth1}
  \frac{1}{C} |\nu|^2 - C \leq L(x,t,\nu), \quad
  \text{ for all }(x,t,\nu) \in Q \times \R^d.
\end{equation}
Then, for any $(x,s) \in Q$ and for any $\nu \in \mathbb{R}^d$, we have the following estimates:
\begin{align} \nonumber
L(x,s,\nu) + \langle A^\star [P](x,s) , \nu \rangle & \geq  \frac{1}{C} |\nu|^2 - \|a\|_{L^\infty(Q;\mathbb{R}^{k \times d})} |P(s)| |\nu| - C \\
& \geq \frac{1}{C} (|\nu|^2 - |P(s)|^2 - 1) \geq \frac{1}{C} (|\nu|^2 - 1). \nonumber
\end{align}
Now we show that $\umap[\gamma,P]$ is bounded in $L^{\infty}(Q)$. For any $(x,t) \in Q$, using the above bound for the running cost $L$, the bound $\|\gamma\|_{L^{\infty}(Q)} \leq R$, together with Assumption \hypD{} on the terminal cost $g$, we obtain that
$\umap[\gamma,P](x,t) \geq - C(R)$.
In addition, using Assumption \hypC{} and the fact that that $\|\gamma\|_{L^{\infty}(Q)} \leq R$, we deduce that
\begin{equation*}
    \umap[\gamma,P](x,t) \leq J[\gamma,P](x,t,0) \leq C(R),
\end{equation*}
from which we conclude that $\| \umap[\gamma,P] \|_{L^\infty(Q)} \leq C(R)$.

Finally we show equation \eqref{eq:u-bounded-controls}.
Let $t\in [0,T]$, let $\varepsilon \in (0,1)$ and let $\tilde{\nu} \in \mathbb{L}_{\mathbb{F}}^2(t,T)$ be an $\varepsilon$-optimal process.
Since $g$ is bounded (Assumption \hypD{}) and since $(\gamma,P) \in \ballR$, we deduce from the above inequality that
\begin{align}  \nonumber
\mathbb{E} \Big[ \int_t^T |\tilde{\nu}_s|^2 \dd s \Big]  &
\leq C \Big( \, \inf_{\nu \in \mathbb{L}_{\mathbb{F}}^2(t,T)} J[\gamma,P](x,t,\nu) +  \varepsilon + 1 \Big) \\
&  \leq C \big(\umap[\gamma,P](x,t) +  2 \big) \leq C, \nonumber
\end{align}
where the constant $C$ does not depend on $t$ and $\varepsilon$.
Thus any $\varepsilon$-optimal process lies in $\mathbb{L}_{\mathbb{F}}^{2,C}(t,T)$, which concludes the proof.
\end{proof}

\begin{proof}[Proof of Lemma \ref{lemma:hjb2}]
\label{proof:hjb2}
Let $(\gamma_1,P_1)$ and $(\gamma_2,P_2)$ be in $\ballR$. Let $u_1= \umap[\gamma_1,P_1]$ and $u_2= \umap[\gamma_2,P_2]$.
By Lemma \ref{lem:u-L-infty}, there exists $C>0$ such that
\begin{equation} \nonumber
u_2(x,t)-u_1(x,t)  = 
\inf_{\nu \in \mathbb{L}_{\mathbb{F}}^{2,C}(t,T)}
J[\gamma_2,P_2](x,t,\nu)
-
\inf_{\nu' \in \mathbb{L}_{\mathbb{F}}^{2,C}(t,T)}
J[\gamma_1,P_1](x,t,\nu'),
\end{equation}
for any $(x,t) \in Q$.
We denote $(X^\nu_{s})_{s \in [t,T]}$ the solution to the stochastic differential equation $\dd X_s = \nu_s \dd s + \sqrt{2} \dd B_s$ with $X^\nu_t = x$, for any $\nu \in \mathbb{L}_{\mathbb{F}}^{2}(t,T)$. Then
\begin{align}  \nonumber
& | u_2(x,t)- u_1(x,t)| 
\leq \sup_{\nu \in \mathbb{L}_{\mathbb{F}}^{2,C}(t,T)} \big| J[\gamma_2,P_2](x,t,\nu) - J[\gamma_1,P_1](x,t,\nu) \big| \\
& \qquad \quad \leq \sup_{\nu \in \mathbb{L}_{\mathbb{F}}^{2,C}(t,T)} \mathbb{E} \Big[ \int_t^T  |\langle A^\star [P_2 - P_1](X^\nu_s,s) , \nu_s \rangle| + |(\gamma_2- \gamma_1)(X^\nu_s,s)| \, \dd s \Big].  \nonumber
\end{align}
For any $(x,s) \in Q$ and $\nu \in \mathbb{R}^d$, the Cauchy-Schwarz inequality  yields
\begin{align} \nonumber
|\langle A^\star [P_2 - P_1](x,s) , \nu \rangle | & \leq |\langle a(x,s) P_2(s)- P_1(s) | |\nu | \\ &
\leq \|a\|_{L^\infty(Q;\mathbb{R}^{k \times d})} | P_2(s) - P_1(s)|
\, |\nu |.
\nonumber
\end{align}
Using again Cauchy-Schwarz inequality and $\| a\|_{L^\infty(Q;\mathbb{R}^{k \times d})} \leq C$, we conclude that
\begin{equation}  \nonumber
| u_2(x,t)- u_1(x,t)| \leq C \Big(\| P_2- P_1 \|_{L^2(0,T;\mathbb{R}^k)}  + \| \gamma_2- \gamma_1 \|_{L^\infty(Q)} \Big),
\end{equation}
as was to be proved.
\end{proof}

We prove Proposition \ref{prop:hjb1} with a density argument. In a nutshell: we prove in Proposition \ref{prop:hjb_hol} below that the result of Proposition \ref{prop:hjb1} holds true when $\gamma$ and $P$ are H\"older continuous. Then we pass to the limit, using Lemma \ref{lemma:hjb2}.

\begin{proposition} \label{prop:hjb_hol}
Let $R>0$ and let $\beta \in (0,1)$. For any $(\gamma,P) \in \ballR \cap \mathcal{C}^\beta(Q) \times \mathcal{C}^\beta(0,T;\R^k)$, the viscosity solution to the Hamilton-Jacobi-Bellman equation \eqref{eq:hjb_alone} is a classical solution. Moreover, there exists $\alpha \in (0,1)$ such that $\umap[\gamma,P]$ lies in $\mathcal{C}^{2+\alpha,1+\alpha/2}(Q)$ and there exists a constant $C(R)$, depending only on $R$, such that $\| \umap[\gamma,P] \|_{W^{2,1,q}(Q)} \leq C$.
\end{proposition}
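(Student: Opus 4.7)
The plan is to prove the proposition by combining a density argument with a bootstrap regularity procedure. First I would approximate $(\gamma,P)$ by a sequence $(\gamma^n,P^n)$ of smooth data in $\mathcal{C}^\infty(Q) \times \mathcal{C}^\infty(0,T;\R^k)$ such that $\gamma^n \to \gamma$ in $\mathcal{C}^\beta(Q)$, $P^n \to P$ in $\mathcal{C}^\beta(0,T;\R^k)$, and $(\gamma^n,P^n) \in \Xi_{2R}$ (this can be done by convolution with smooth mollifiers on the torus). For each such smooth pair, I would construct a classical solution $u^n \in \mathcal{C}^{2+\alpha,1+\alpha/2}(Q)$ to the HJB equation \eqref{eq:hjb_alone} driven by $(\gamma^n,P^n)$, derive uniform $\mathcal{C}^{2+\alpha,1+\alpha/2}$ estimates, and pass to the limit.

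For the construction in the smooth case, I would set up a fixed-point argument: given a candidate $\tilde u$ in a suitably chosen closed ball of $\mathcal{C}^{1+\alpha',(1+\alpha')/2}(Q)$, let $\mathcal{T}(\tilde u)$ be the unique solution of the linear terminal-value problem $-\partial_t w - \Delta w = \gamma^n - \bm{H}[\nabla \tilde u + A^\star P^n]$ with $w(\cdot,T)=g$. By Lemma \ref{lemma:reg_H} and the smoothness of $(\gamma^n,P^n,g)$, the right-hand side is H\"older-continuous, so Theorem \ref{theo:holder_reg_classical} implies that $\mathcal{T}$ maps into $\mathcal{C}^{2+\alpha,1+\alpha/2}(Q)$ and is compact. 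Schauder's fixed-point theorem then yields a classical solution $u^n$. By uniqueness of viscosity solutions, $u^n = \umap[\gamma^n,P^n]$.

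The heart of the proof is the uniform estimate. By Lemma \ref{lem:u-L-infty} applied to $(\gamma^n,P^n)$, one has $\|u^n\|_{L^\infty(Q)} + [u^n]_{\mathrm{Lip}_x} \leq C(R)$ independently of $n$, so $\nabla u^n$ is bounded in $L^\infty(Q;\R^d)$ by $C(R)$. Combined with the quadratic growth of $H$ following from \hypA{}--\hypC{}, this makes $\gamma^n - \bm{H}[\nabla u^n + A^\star P^n]$ uniformly bounded in $L^\infty(Q) \subset L^q(Q)$. After reversing time, Theorem \ref{thm:bound-u-u0-h} gives a uniform bound on $\|u^n\|_{W^{2,1,q}(Q)}$, and Lemma \ref{lemma:max_reg_embedding} then upgrades this to a uniform bound on $\|\nabla u^n\|_{\mathcal{C}^\delta(Q;\R^d)}$ for some $\delta \in (0,1)$. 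Using the local H\"older continuity of $H$ from Lemma \ref{lemma:reg_H}, together with the $\mathcal{C}^\beta$ bound on $A^\star P^n$ and $\gamma^n$, the right-hand side of the linearized equation is bounded in $\mathcal{C}^{\beta',\beta'/2}(Q)$ for $\beta' = \min(\delta,\beta)$. A final application of Theorem \ref{theo:holder_reg_classical} produces the desired uniform estimate in $\mathcal{C}^{2+\alpha,1+\alpha/2}(Q)$ for some $\alpha \in (0,1)$ depending only on $R$.

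To conclude, I would extract via Arzel\`a-Ascoli a subsequence converging in $\mathcal{C}^{2+\alpha',1+\alpha'/2}(Q)$ for $\alpha' < \alpha$ to some $\tilde u$ which is then a classical solution of \eqref{eq:hjb_alone} for the data $(\gamma,P)$. By stability of viscosity solutions under uniform convergence of the data, $u^n \to \umap[\gamma,P]$ in $\mathcal{C}(Q)$, forcing $\tilde u = \umap[\gamma,P]$; the uniform estimates are inherited in the limit, yielding both the classical regularity and the $W^{2,1,q}$ bound. The main obstacle is the construction step for smooth data together with the uniform propagation of the Lipschitz-in-$x$ estimate of Lemma \ref{lem:u-L-infty} through the approximation; once these are secured, the bootstrap argument is essentially routine thanks to the parabolic estimates of Subsection \ref{subsec:43} and the H\"older regularity of the Hamiltonian.
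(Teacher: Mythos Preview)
Your bootstrap for the uniform $W^{2,1,q}$ and $\mathcal{C}^{2+\alpha,1+\alpha/2}$ estimates, once a classical solution $u^n$ is in hand, is correct and matches the paper's reasoning. The preliminary density step (smooth data approximating H\"older data) is harmless but unnecessary here: the paper argues directly for H\"older $(\gamma,P)$, and the approximation argument you describe is exactly what the paper uses one level up, in the proof of Proposition~\ref{prop:hjb1}, to pass from $\ballR\cap(\mathcal{C}^\beta\times\mathcal{C}^\beta)$ to all of $\ballR$.

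The actual gap is the construction step. You invoke Schauder's fixed-point theorem on ``a suitably chosen closed ball'' of $\mathcal{C}^{1+\alpha',(1+\alpha')/2}(Q)$ but never verify that $\mathcal{T}$ maps this ball into itself, and this is not automatic because $H$ has quadratic growth in $p$: if $\|\nabla\tilde u\|_{L^\infty}$ is of order $K$ then $\|\bm H[\nabla\tilde u+A^\star P^n]\|_{L^\infty}$ is of order $K^2$, so the parabolic estimate on $\mathcal{T}(\tilde u)$ produces a gradient bound that may well exceed $K$. The Lipschitz-in-$x$ bound of Lemma~\ref{lem:u-L-infty} is an estimate on the value function, i.e.\ on an actual fixed point of $\mathcal{T}$, and cannot be used to establish invariance of a ball for arbitrary inputs $\tilde u$. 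This is precisely why the paper uses the Leray--Schauder theorem instead: it introduces a homotopy $\mathcal{T}[u,\tau]$, $\tau\in[0,1]$, solving $-\partial_t\tilde u-\Delta\tilde u+\tau\bm H[\nabla u+A^\star P]=\tau\gamma$ with $\tilde u(T)=\tau g$, and then only needs a priori bounds on fixed points $u=\mathcal{T}[u,\tau]$. Each such fixed point is the value function of a rescaled stochastic control problem, so the argument of \cite[Proposition~1, Step~2]{BHP-schauder} gives $\|u\|_{L^\infty}+\|\nabla u\|_{L^\infty}\leq C(R)$ uniformly in $\tau$; Theorem~\ref{theo:max_reg1} then upgrades this to $\|u\|_{W^{2,1,q}}\leq C(R)$, which is exactly the hypothesis Leray--Schauder requires. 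Your sketch could be repaired along the same lines (e.g.\ by truncating $H$ outside $|p|\leq C(R)$ and arguing a posteriori that the truncation is inactive), but as written the Schauder step is not justified.
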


The proof of Proposition \ref{prop:hjb_hol} is given at page \pageref{proof:hjb_hol} and relies on a fixed point approach which requires some preparatory work. We introduce the map $\mathcal{T} \colon W^{2,1,q}(Q) \times [0,1] \to W^{2,1,q}(Q)$ which associates to any $u \in W^{2,1,q}(Q)$ and $\tau \in [0,1]$ the classical solution $\tilde{u} = \mathcal{T}[u,\tau]$ to the linear parabolic equation
\begin{equation} \nonumber
\begin{array}{rlr}
- \partial_t \tilde{u} - \Delta \tilde{u} + \tau \bm{H}[\nabla u + A^\star P] \, = & \!\!\! \tau \gamma & (x,t) \in Q, \\
 \tilde{u}(x,T) \, = & \!\!\!  \tau g(x) & x\in \mathbb{T}^d.
\end{array}
\end{equation}
For any $(u,\tau) \in W^{2,1,q}(Q) \times [0,1]$, we have $\tau ( \gamma - \bm{H}[\nabla u + A^\star P]) \in L^\infty(Q)$, by Lemma \ref{lemma:reg_H} and Lemma \ref{lemma:max_reg_embedding}.
It follows then from Theorem \ref{theo:max_reg1} that $\mathcal{T}[u,\tau]$ lies in $W^{2,1,q}(Q)$, proving that $\mathcal{T}$ is well-defined.

\begin{lemma} \label{lemma:cont-comp-T}
 The mapping $\mathcal{T}$ is continuous and compact. In addition, for all $K >0$, there exists $\alpha \in (0,1)$ and $C>0$ depending on $K$, $\gamma$, and $P$ such that $\|u\|_{W^{2,1,q}(Q)} \leq K$ implies $\|\mathcal{T}[u,\tau]\|_{\mathcal{C}^{2+\alpha,1+ \alpha/2}(Q)} \leq C$.
\end{lemma}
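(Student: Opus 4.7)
The plan is to split the lemma into three separate tasks: continuity, the Hölder bound, and compactness, with the Hölder bound doing the bulk of the work since compactness will follow from it by a standard compact embedding argument.

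For continuity, I would fix $(u_1,\tau_1)$ and $(u_2,\tau_2)$ in $W^{2,1,q}(Q) \times [0,1]$ and look at the difference $w = \mathcal{T}[u_2,\tau_2] - \mathcal{T}[u_1,\tau_1]$, which solves the (time-reversed) linear heat equation with zero drift and zero potential, terminal condition $w(T) = (\tau_2-\tau_1)g$, and right-hand side
\begin{equation*}
(\tau_2-\tau_1)\bigl(\gamma - \bm{H}[\nabla u_2 + A^\star P]\bigr) + \tau_1 \bigl(\bm{H}[\nabla u_1 + A^\star P] - \bm{H}[\nabla u_2 + A^\star P]\bigr).
\end{equation*}
Applying Theorem \ref{thm:bound-u-u0-h} reduces everything to an $L^q(Q)$ bound on this right-hand side. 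The first piece is controlled by $|\tau_2-\tau_1|$ times a constant (via Lemma \ref{lemma:max_reg_embedding} and the local boundedness of $H$ from Lemma \ref{lemma:reg_H}). For the second piece, I would use that $H_p$ is locally bounded (Lemma \ref{lemma:reg_H}) together with Lemma \ref{lemma:max_reg_embedding} to obtain, on a ball of $W^{2,1,q}(Q)$ containing $u_1$ and $u_2$, a Lipschitz estimate of the form $\|\bm{H}[\nabla u_2 + A^\star P] - \bm{H}[\nabla u_1 + A^\star P]\|_{L^q(Q)} \leq C \|u_2-u_1\|_{W^{2,1,q}(Q)}$. Combining with $g \in \mathcal{C}^{2+\alpha_0}(\mathbb{T}^d) \hookrightarrow W^{2-2/q,q}(\mathbb{T}^d)$ (Assumption \hypD{}) for the terminal data gives continuity.

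For the Hölder bound, assume $\|u\|_{W^{2,1,q}(Q)} \leq K$. Lemma \ref{lemma:max_reg_embedding} yields $\nabla u \in \mathcal{C}^\delta(Q;\R^d)$ with norm controlled by $CK$. Since $a \in \mathcal{C}^{\beta_0,\beta_0/2}(Q;\R^{k\times d})$ for some $\beta_0 \in (0,1)$ by Assumption \hypD{} (here I use that $a$ is Hölder on bounded sets and that $Q$ is compact, together with the differentiability of $a$ with respect to $x$) and $P \in \mathcal{C}^\beta(0,T;\R^k)$, the product $A^\star P = a^\star P$ lies in $\mathcal{C}^{\beta',\beta'/2}(Q;\R^d)$ for some $\beta' \in (0,1)$. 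Since a function $\delta$-Hölder in both variables is automatically $(\delta,\delta/2)$-Hölder, I can combine the two bounds and conclude that $\nabla u + A^\star P$ lies in a fixed bounded set of $\mathcal{C}^{\beta'',\beta''/2}(Q;\R^d)$ for $\beta'' = \min(\delta,\beta')$. The local Hölder continuity of $H$ (Lemma \ref{lemma:reg_H}) then promotes this to a uniform bound on $\bm{H}[\nabla u + A^\star P]$ in $\mathcal{C}^{\tilde{\beta},\tilde{\beta}/2}(Q)$ for some $\tilde{\beta}$, and similarly $\tau\gamma$ is bounded in $\mathcal{C}^\beta$ uniformly in $\tau \in [0,1]$. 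Applying Theorem \ref{theo:holder_reg_classical} to the parabolic equation defining $\tilde{u} = \mathcal{T}[u,\tau]$ (with $b=c=0$) and using $\tau g \in \mathcal{C}^{2+\alpha_0}(\mathbb{T}^d)$ uniformly, I obtain $\alpha \in (0,1)$ and $C = C(K,\gamma,P)$ with $\|\mathcal{T}[u,\tau]\|_{\mathcal{C}^{2+\alpha,1+\alpha/2}(Q)} \leq C$.

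Compactness of $\mathcal{T}$ then drops out: bounded sets of $W^{2,1,q}(Q) \times [0,1]$ are mapped into bounded sets of $\mathcal{C}^{2+\alpha,1+\alpha/2}(Q)$, which embeds compactly into $W^{2,1,q}(Q)$ by Arzelà--Ascoli applied to the second-order spatial and first-order time derivatives (they are uniformly bounded and equicontinuous on the compact set $Q$, hence precompact in $\mathcal{C}(Q) \hookrightarrow L^q(Q)$). The main technical obstacle is bookkeeping the Hölder exponents in space versus time so as to legitimately apply Theorem \ref{theo:holder_reg_classical}: one must turn the mixed-regularity information on $\nabla u$ (which comes as joint $\delta$-Hölder from Lemma \ref{lemma:max_reg_embedding}) and on $A^\star P$ into the anisotropic $\mathcal{C}^{\beta,\beta/2}$ regularity required by that theorem, and then pass through the Nemytskii operator $\bm{H}$ without losing the anisotropic exponent, which is possible because $H$ is locally jointly Hölder in all its variables.
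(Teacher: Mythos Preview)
Your proposal is correct and follows essentially the same route as the paper: continuity via Theorem \ref{thm:bound-u-u0-h} and the embedding of Lemma \ref{lemma:max_reg_embedding}, the H\"older bound via Lemma \ref{lemma:reg_H} and Theorem \ref{theo:holder_reg_classical}, and compactness via Arzel\`a--Ascoli on $\mathcal{C}^{2+\alpha,1+\alpha/2}(Q)\hookrightarrow W^{2,1,q}(Q)$. The only noteworthy differences are cosmetic: you prove continuity by a direct difference estimate (yielding local Lipschitz continuity) whereas the paper argues sequentially, and you are more explicit than the paper about converting the joint H\"older regularity of $\nabla u + A^\star P$ into the anisotropic $\mathcal{C}^{\beta,\beta/2}$ form required by Theorem \ref{theo:holder_reg_classical}.
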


\begin{proof}
\noindent \textit{Step 1: Continuity of $\mathcal{T}$.}
Let $(u_k,\tau_k) \in W^{2,1,q}(Q) \times [0,1]$ be a sequence converging to $(u,\tau) \in W^{2,1,q}(Q) \times [0,1]$. Then $\nabla u_k \to \nabla u$ in $L^\infty(Q;\mathbb{R}^d)$ by Lemma \ref{lemma:max_reg_embedding}. Then $\tau_k (\gamma - \bm{H}[\nabla u_k + A^\star P]) \to \tau (\gamma - \bm{H}[\nabla u + A^\star P])$ in $L^\infty(Q;\mathbb{R}^d)$ by continuity of the Hamiltonian (see Lemma \ref{lemma:reg_H}). Finally  $\mathcal{T}$ is continuous, by Theorem \ref{thm:bound-u-u0-h}.

\noindent \textit{Step 2: Compactness of $\mathcal{T}$.} Let $K>0$ and let $(u,\tau) \in W^{2,1,q}(Q) \times [0,1]$ be such that $\|u\|_{W^{2,1,q}(Q)} \leq K$. 
Combining Lemma \ref{lemma:max_reg_embedding} and Lemma \ref{lemma:reg_H} there exist $\alpha \in (0,1)$ and $C>0$ such that $ \|\gamma - \bm{H}[\nabla u + A^\star P]\|_{C^\alpha(Q)} \leq C$. Then applying Theorem \ref{theo:holder_reg_classical}, there exist $\alpha \in (0,1)$ and $C>0$ such that $\|\mathcal{T}[u,\tau]\|_{\mathcal{C}^{2+\alpha,1+ \alpha/2}(Q)} \leq C$. By the Arzela-Ascoli Theorem the centered ball of $\mathcal{C}^{2+\alpha,1+ \alpha/2}(Q)$ of radius $C>0$ is a relatively compact subset of $W^{2,1,q}(Q)$. As a consequence $\mathcal{T}[u,\tau]$ is a compact mapping and the conclusion follows.
\end{proof}

\begin{theorem}{\normalfont (Leray-Schauder)} \label{thm:Leray-schauder}
Let $X$ be a Banach space and let $T : X \times [0, 1] \to X$ be a continuous and compact mapping. Assume that $T (x,0) = 0$ for all $x\in X$ and assume there exists $C>0$ such that $\|x\|_X < C$ for all $(x,\tau) \in X\times [0,1]$ such that $T (x,\tau) = x$. Then, there exists $x \in X$ such that $T(x,1) = x$.
\end{theorem}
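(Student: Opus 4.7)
The plan is to invoke Leray–Schauder topological degree theory, which is designed precisely to handle compact perturbations of the identity on a Banach space. Let $C>0$ be the a priori bound supplied by the hypothesis and fix any $R>C$. The assumption that every fixed point $x$ of $T(\cdot,\tau)$ satisfies $\|x\|_X < C$ ensures, for every $\tau \in [0,1]$, that the equation $x - T(x,\tau) = 0$ has no solution with $\|x\|_X = R$. Hence the joint map $(x,\tau) \mapsto x - T(x,\tau)$ is an admissible homotopy of compact perturbations of the identity on $\overline{B_R}$ that avoids $0$ on the boundary uniformly in $\tau$.

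By the homotopy invariance property of the Leray–Schauder degree, the integer $\deg\bigl(I - T(\cdot,\tau),\, B_R,\, 0\bigr)$ is independent of $\tau \in [0,1]$. At $\tau = 0$ the assumption $T(x,0) = 0$ collapses this to $\deg(I,\, B_R,\, 0) = 1$ by the normalization property of the degree. Consequently $\deg\bigl(I - T(\cdot,1),\, B_R,\, 0\bigr) = 1$, and the solvability property (nonzero degree implies existence of a solution) produces some $x \in B_R$ with $T(x,1) = x$, which is the required fixed point.

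The main obstacle is not in the execution but in the preparatory machinery: constructing the Leray–Schauder degree for compact perturbations of the identity requires approximating $T$ by finite-dimensional maps (exploiting the relative compactness of the image), applying Brouwer's degree to these approximations, and checking that the limiting integer is independent of the approximation. Once this toolbox is in place, the theorem reduces to a one-line homotopy argument as sketched above. An alternative route avoiding degree theory uses Schauder's fixed point theorem applied to the truncated map $\tilde{T}(x) = r_R(T(x,1))$, where $r_R$ is the radial retraction of $X$ onto $\overline{B_R}$; one then splits into cases according to whether the Schauder fixed point lies in the interior of $B_R$ (immediate) or on $\partial B_R$ (which must be excluded via a further continuation argument exploiting the joint continuity of $T$ in $\tau$ and the a priori bound). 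The degree-theoretic formulation is cleaner and is the standard reference for this statement.
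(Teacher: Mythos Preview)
Your degree-theoretic argument is correct and is precisely the standard proof; the paper itself does not prove this statement but simply cites \cite[Theorem 11.6]{gilbarg2015elliptic}, whose argument is essentially the one you have sketched. There is nothing to compare: you have filled in the content behind the citation.
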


\begin{proof}
See \cite[Theorem 11.6]{gilbarg2015elliptic}.
\end{proof}

\begin{proof}[Proof of Proposition \ref{prop:hjb_hol}] \label{proof:hjb_hol}
We prove that under the assumptions of the proposition, the HJB equation has a classical solution in $\mathcal{C}^{2+\alpha,1+\alpha/2}(Q)$ (for some $\alpha \in (0,1)$), which is then necessarily the unique viscosity solution $\umap[\gamma,P]$. To this purpose, we prove the existence of a solution to the fixed point equation 
$u= \mathcal{T}[u,1]$.
We have $\mathcal{T}[u,0] = 0$ for all $u \in W^{2,1,q}(Q)$. Now let $(u,\tau) \in W^{2,1,q}(Q) \times [0,1]$ be such that $\mathcal{T}[u,\tau] = u$. From Lemma \ref{lemma:cont-comp-T}, the mapping $\mathcal{T}$ is continuous and compact, in addition $u$ is a classical solution and thus the viscosity solution to the Hamilton-Jacobi-Bellman equation
\begin{equation} \nonumber
\begin{array}{rlr}
- \partial_t u - \Delta u + \tau \bm{H}[\nabla u + A^\star P] \, = & \!\!\! \tau \gamma & (x,t) \in Q, \\
 u(x,T) \, = & \!\!\! \tau g(x) & x\in \mathbb{T}^d,
\end{array}
\end{equation}
and can be interpreted as the value function associated to the following stochastic control problem
\begin{equation} \nonumber
\inf_{\nu \in \mathbb{L}_{\mathbb{F}}^2(0,T)} 
 \tau \mathbb{E} \Big[\int_0^T  L(X^\tau_s,s,\nu_s) + \langle A^\star [P](X^\tau_s,s) , \nu_s \rangle + \gamma(X^\tau_s,s) \dd s + g(X^\tau_T) \Big],
\end{equation}
where $(X^\tau_{s})_{s \in [t,T]}$ is the solution to $\dd X_s = \tau \nu_s \dd s + \sqrt{2} \dd B_s$, $X_0 = Y$. Following \cite[Proposition 1, Step 2]{BHP-schauder}, there exists a constant $C>0$, depending only on $R$, such that $\|u\|_{L^\infty(Q)} + \|\nabla u\|_{L^\infty(Q;\mathbb{R}^d)} \leq C$.
Then using Lemma \ref{lemma:reg_H} and recalling that $(\gamma,P) \in \Xi_R$, we deduce that $\| \bm{H}[\nabla u + A^\star P] - \gamma\|_{L^\infty(Q)} \leq C$. It follows that $u$ is the solution to a parabolic PDE with bounded coefficients and thus $\| u \|_{W^{2,1,q}(Q)} \leq C$, by Theorem \ref{theo:max_reg1}. Again, $C$ only depends on $R$.
Finally, by the Leray-Schauder theorem (Theorem \ref{thm:Leray-schauder}), there exists a solution to $u= \mathcal{T}[u,1]$, which is necessarily $\umap[\gamma,P]$.
\end{proof}

\begin{proof}[Proof of Proposition \ref{prop:hjb1}] \label{proof:hjb1}
Take $(\gamma,P) \in \ballR$ and fix $\beta \in (0,1)$.
Let $(\gamma_n,P_n)$ be a sequence in ${\Xi}_{R+1} \cap \mathcal{C}^{\beta}(Q) \times \mathcal{C}^{\beta}(0,T;\R^k)$ such that
$\| \gamma_n - \gamma \|_{L^\infty(Q)} \longrightarrow 0$ and such that $\| P_n - P \|_{L^2(0,T;\R^k)} \longrightarrow 0$.
We do not detail the construction of such a sequence, this can be done by convolution.
Define $u^n= \umap[\gamma^n,P^n]$ and $u=\umap[\gamma,P]$. By Lemma \ref{lemma:hjb2}, $u_n \rightarrow u$ for the $L^\infty$-norm.
Moreover, by Proposition \ref{prop:hjb_hol},
\begin{equation} \label{eq:boundW21p}
\| u^n \|_{W^{2,1,q}(Q)} \leq C(R), \quad \forall n \in \mathbb{N}.
\end{equation}
Thus, the three sequences $(\partial_t u^n)_{n \in \mathbb{N}}$, $(\Delta u^n)_{n \in \mathbb{N}}$, and $(\nabla u^n)_{n \in \mathbb{N}}$ are bounded in $L^q(Q)$. By the Banach-Alaoglu theorem, the three sequences have at least one accumulation point for the weak topology of $L^q(Q)$. These three accumulation points are necessarily (by definition of weak derivatives) equal to $\partial_t u$, $\Delta u$, and $\nabla u$, respectively. Since
the $L^q$-norm is weakly lower semi-continuous, we deduce that $\| u \|_{W^{2,1,q}(Q)} \leq C(R)$, where $C(R)$ is as in \eqref{eq:boundW21p}. This concludes the proof.
\end{proof}

\subsection{The other mappings}

\begin{proof}[Proof of Lemma \ref{lemma:other}] \label{proof:other}

Let $(\gamma,P) \in \ballR$. Let $u= \umap[\gamma,P]$. We already know  from Proposition \ref{prop:hjb1} that $\| u \|_{W^{2,1,q}(Q)} \leq C(R)$. Then Lemma \ref{lemma:max_reg_embedding} implies that $u$ and $\nabla u$ are continuous and that
$\| u \|_{L^\infty(Q)} \leq C(R)$ and $\| \nabla u \|_{L^\infty(Q;\R^d)} \leq C(R)$.
Let $v=\vmap[\gamma,P]= -  \bm{H}_p[  \nabla u +  A^\star P]$.
We have
\begin{equation*}
D_x v = - \bm{H}_{px}[\nabla u + A^\star P] - \bm{H}_{pp}[\nabla u + A^\star P](D^2_{xx} u + D_x A^\star P).
\end{equation*}
Using the regularity of $u$, the regularity properties of the Hamiltonian given in Lemma \ref{lemma:H-L-quad}, and the regularity assumptions on $a$ (Assumption \hypD{}), we deduce that $\| v \|_{L^\infty(Q;\R^d)} \leq C(R)$ and that $\|D_x v \|_{L^q(Q;\R^{d \times d})} \leq C(R)$. Moreover, $v$ is continuous.

Next, let $m= \mmap[\gamma,P]= \fpmap[v]$. A direct application of Lemma \ref{lemma:fp} yields that $\| m \|_{W^{2,1,q}(Q)} \leq C$.
Finally, let $w= \wmap[\gamma,P]= mv$.
Using again Lemma \ref{lemma:max_reg_embedding}, we obtain that $m$ is continuous and that $\| m \|_{L^\infty(Q)} \leq C(R)$ and $\| \nabla m \|_{L^\infty(Q;\R^d)} \leq C(R)$. Then $w \in \feedback{}$, with a norm bounded by some constant $C(R)$. The lemma is proved.
\end{proof}

\begin{proof}[Proof of Lemma \ref{lemma:coupling}]
\label{proof:coupling}

The two statements concerning $\gammamap$ are directly deduced from Assumptions \hypC{} and \hypE{}.
Let $w \in \feedback{}$. Recalling the definition of the operator $A$ (page \pageref{def:A}), it is easy to see with Assumption \hypD{} that $Aw \in \mathcal{C}(0,T;\R^d)$. Assumptions \hypC{} and \hypE{} ensure then that $\Pmap[w]= \bm{\phi} \big[ A[w] \big]$ lies in $\mathcal{C}(0,T;\R^k)$ and that $\| \Pmap[w] \|_{L^\infty(0,T;\R^k)} \leq C$.
Let us next consider $w_1$ and $w_2$ in $\feedback{}$. We have
\begin{align*}
\| Aw_2 - Aw_1 \|_{L^\infty(0,T;\R^k)}
\leq {} & \| a \|_{L^\infty(Q;\R^{k \times d})}
\| w_2- w_1 \|_{L^\infty(0,T;L^1(\mathbb{T}^d;\R^d))} \\
\leq {} & C \| w_2 - w_1 \|_{L^2(Q;\R^d)},
\end{align*}
by Assumption \hypD{}. Using next the Lipschitz-continuity of $\phi$ (Assumption \hypE{}), we obtain that
$\| \bm{\phi}[Aw_2] - \bm{\phi}[Aw_1] \|_{L^2(0,T;\R^k)}
\leq C \| w_2-w_1 \|_{L^2(Q;\R^d)}$,
as was to be proved.
\end{proof}

\end{document}